\numberwithin{equation}{section}
\begin{document}

\title[Locally and Polar Harmonic Maass Forms]{Locally and Polar Harmonic Maass Forms for Orthogonal Groups of Signature $(2, n)$}
\author{Paul Kiefer}
\address{Department of Mathematics, University of Antwerp, BE-2000 Antwerp, Belgium.}
\email{Paul.Kiefer@uantwerpen.be}

\thanks{The author was funded by the Research Foundation - Flanders (FWO) within the framework of the Odysseus program project number G0D9323N, and by the Deutsche Forschungsgemeinschaft (DFG, German Research Foundation) -- SFB-TRR 358/1 2023 -- 491392403.}

\begin{abstract}
We generalize the notions of locally and polar harmonic Maass forms to general orthogonal groups of signature $(2, n)$ with singularities along real analytic and algebraic cycles. We prove a current equation for locally harmonic Maass forms and recover the Fourier expansion of the Oda lift involving cycle integrals. Moreover, using the newly defined polar harmonic Maass forms, we prove that meromorphic modular forms with singularities along special divisors are orthogonal to cusp forms with respect to a regularized Petersson inner product. Using this machinery, we derive a duality theorem involving cycle integrals of meromorphic modular forms along real analytic cycles and cycle integrals of locally harmonic Maass forms along algebraic cycles.
\end{abstract}

\maketitle

\tableofcontents

\section{Introduction}

Locally harmonic Maass forms are a class of harmonic modular forms with a specific type of singularity. They were first introduced by Bringmann, Kane and Kohnen \cite{BringmannLocallyHarmonic} to study the Shintani lift and, in the weight $k = 1$ case, by \cite{Hoevel} to study the Shimura lift, later extended to arbitrary weight by \cite{CrawfordFunke}. A key example of locally harmonic Maass forms are certain $\xi$-preimages $\calF_{1 - k, D}$ of cusp forms of weight $2k$ for $D > 0$ introduced by \cite{Zagier}
\begin{align}
	f_{k, D}(Z) = \sum_{Q \in \calQ_D} \frac{1}{Q(1, Z)^k}, \qquad \qquad Z \in \IH, \label{eq:ZagierCuspForm}
\end{align}
where $\calQ_D$ is the set of integral binary quadratic forms of discriminant $D$. For negative $D < 0$, Bengoechea \cite{Paloma} extended the definition of $f_{k, D}$ to obtain meromorphic cusp forms with poles at CM-points. The corresponding $\xi$-preimages $\calF_{1 - k, D}$, termed polar harmonic Maass forms, were introduced in \cite{BringmannPolar}. An important application of the theory of locally and polar harmonic Maass forms is a rationality result for cycle integrals of the meromorphic modular forms $f_{k, D}$, see \cite{Loebrich}, \cite{Alfes}. Recently, this theory was extended to orthogonal modular forms in signature $(2, 2)$ by \cite{HilbertPaper}, where a rationality result for cycle integrals of meromorphic modular forms was established.

The aim of this paper is to generalize this theory to arbitrary signature $(2, n)$, providing the foundation to extend many of the results involving locally and polar harmonic Maass forms beyond previously studied cases. The special case of signature $(2, 1)$ then corresponds to the original definition in \cite{BringmannLocallyHarmonic}, \cite{BringmannPolar}, while signature $(2, 2)$ corresponds to the case investigated in \cite{HilbertPaper}. \\

We will now describe our results in more detail. Let $L$ be an even lattice of signature $(2, n)$ with corresponding quadratic form $q$ and bilinear form $(\cdot, \cdot)$. Choose $e \in L$ primitive isotropic and $e' \in L'$ isotropic with $(e, e') = 1$. Let $K = e^\perp \cap e'^\perp \cap L$ be the orthogonal complement of the plane spanned by $e$ and $e'$. Then the hermitian symmetric domain corresponding to the orthogonal group $O(L \otimes \IR)$ can be realized as
$$\IH_n^\pm = \{ Z = X + i Y \in K \otimes \IC \ \vert\ q(Y) > 0\}$$
and we have a natural action of $O(L \otimes \IR)$ on $\IH_n^\pm$. We denote a connected component of $\IH_n^{\pm}$ by $\IH_n$ and write $O^+(L \otimes \IR) \subseteq O(L \otimes \IR)$ for the index $2$ subgroup that preserves the component. Let $\Gamma \subseteq O^+(L) = O(L) \cap O^+(V(\IR))$ be a neat congruence subgroup. Extending Zagier's modular forms in equation \eqref{eq:ZagierCuspForm} to arbitrary signature, Oda \cite{Oda} introduced a family of orthogonal cusp forms for $\Gamma$ of weight $\kappa > n$ indexed by $\mu \in V = L \otimes \IQ$ with $q(\mu) > 0$ given by
$$\omega_\mu^{\cusp}(Z) = \sum_{\gamma \in \Gamma_\mu \bs \Gamma} \frac{1}{(\lambda, Z + e - q(Z) e')^\kappa} \bigg\vert_\kappa \gamma,$$
where $\Gamma_\mu$ is the stabilizer of $\mu$ in $\Gamma$ and $\vert_\kappa$ is the Petersson slash operator, see Section \ref{sec:OrthogonalMF}. The meromorphic counterpart for $\nu \in V$ with negative $q(\nu) < 0$ is given by
$$\omega_\nu^{\mero}(Z) = \sum_{\gamma \in \Gamma_\nu \bs \Gamma} \frac{1}{(\lambda, Z + e - q(Z) e')^\kappa} \bigg\vert_\kappa \gamma$$
and has singularities of order $\kappa$ along the $\Gamma$-translates of the special divisor
$$T_\nu = \{ Z \in \IH_n\ \vert\ (\nu, Z + e - q(Z) e') = 0 \}.$$
A fundamental tool in this paper is the $\xi_{-\kappa}$-operator, generalizing constructions from \cite{HilbertPaper}, \cite{BruinierFunke}, \cite{BruinierFunkeKudla}. It defines a map from differential forms with values in the line bundle $\calL_{-\kappa}$ of modular forms of weight $-\kappa$ to differential forms with values in the line bundle $\calL_{\kappa}$  of modular forms of weight $\kappa$. One of the main results of this paper is the following generalization of \cite[Proposition 6.1]{BringmannLocallyHarmonic},  \cite[Proposition 5.2]{HilbertPaper}.

\begin{thm}[see \thref{thm:LocallyHarmonicMF}]
There exists a real analytic $(n,n-1)$-form with values in $\calL_{-\kappa}$ denoted by $\Omega_{\mu}^{\cusp}$ that is harmonic outside a real analytic cycle $C_\mu$ such that $\xi_{-\kappa} \Omega_\mu^{\cusp} = \omega_\mu^{\cusp}$. The space generated by $\Omega_\mu^{\cusp}$ is called the space of locally harmonic Maass forms.
\end{thm}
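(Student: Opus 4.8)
The plan is to realise $\Omega_\mu^{\cusp}$ as a Poincaré series over $\Gamma_\mu \backslash \Gamma$ of a single-term ``seed'' form and to apply $\xi_{-\kappa}$ term by term. The first and main step is to construct, on $\IH_n \setminus T_\mu$, a $\Gamma_\mu$-invariant real-analytic $(n, n-1)$-form $\Omega_\mu^0$ with values in $\calL_{-\kappa}$ whose defining property is (for a suitable normalisation)
\[
\xi_{-\kappa}\, \Omega_\mu^0 = \frac{1}{(\mu, Z + e - q(Z)e')^\kappa},
\]
that is, whose $\xi_{-\kappa}$-image is the seed of Oda's cusp form. Writing $Z_L = Z + e - q(Z)e'$ and using the covariance under $\Orth^+(V(\IR))$ in the pair $(\mu, Z)$, such a form is forced to be of the shape $g\bigl( (\mu, Z_L)(\mu, \overline{Z_L}) / (Z_L, \overline{Z_L}) \bigr)$ times an explicit $\calL_{-\kappa}$-valued invariant $(n,n-1)$-form algebraic in $\mu, Z_L, \overline{Z_L}$, with $g$ a function of a single real variable. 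Imposing the displayed identity then reduces to an ordinary differential equation for $g$ whose solution is of incomplete-Beta (equivalently, hypergeometric) type. The crucial point is that the resulting $g$ is continuous but fails to be real-analytic at the endpoint of its interval of definition, which corresponds exactly to the vanishing locus $(\mu, Z_L) = 0$, i.e. to $T_\mu$; away from $T_\mu$ the form $\Omega_\mu^0$ is harmonic in the sense relevant here, since $\xi_\kappa \xi_{-\kappa}\Omega_\mu^0 = \xi_\kappa\bigl( (\mu, Z_L)^{-\kappa}\bigr) = 0$ because $(\mu, Z_L)^{-\kappa}$ is holomorphic there.

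Next I would pin down the local behaviour of $\Omega_\mu^0$ along $T_\mu$. Since $q(\mu) > 0$, the cycle $T_\mu = \mu^\perp \cap \IH_n$ is the symmetric space of $\Orth(1,n)$ sitting inside that of $\Orth(2,n)$, hence of real codimension $n$; from the explicit $g$ one checks that $\Omega_\mu^0$ is locally integrable across $T_\mu$, so it extends to a current on $\IH_n$, and that the jump of its derivatives across $T_\mu$ is controlled --- this is exactly the analytic input for the current equation established later in the paper. One then sets
\[
C_\mu = \bigcup_{\gamma \in \Gamma} \gamma T_\mu, \qquad \Omega_\mu^{\cusp} = \sum_{\gamma \in \Gamma_\mu \backslash \Gamma} \Omega_\mu^0 \big\vert_{-\kappa}\, \gamma,
\]
and checks absolute and locally uniform convergence on $\IH_n \setminus C_\mu$ for $\kappa > n$: the extra factor $g$ is bounded, so the estimates that give convergence of $\omega_\mu^{\cusp}$ apply verbatim, and neatness of $\Gamma$ ensures that near any point of $\IH_n \setminus C_\mu$ only finitely many translates $\gamma T_\mu$ occur, so the sum is locally finite there and $\Omega_\mu^{\cusp}$ is real-analytic and harmonic away from $C_\mu$. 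That $\Omega_\mu^{\cusp}$ is a $\Gamma$-invariant $\calL_{-\kappa}$-valued form is then formal from the Poincaré-series construction together with $\Gamma_\mu$-invariance of $\Omega_\mu^0$.

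Finally, since $\xi_{-\kappa}$ intertwines the slash actions of weight $-\kappa$ and weight $\kappa$ (as established in the section on the $\xi$-operator) and commutes with the locally uniformly convergent sum,
\[
\xi_{-\kappa} \Omega_\mu^{\cusp} = \sum_{\gamma \in \Gamma_\mu \backslash \Gamma} \bigl(\xi_{-\kappa}\Omega_\mu^0\bigr)\big\vert_\kappa\, \gamma = \sum_{\gamma \in \Gamma_\mu \backslash \Gamma} \frac{1}{(\mu, Z_L)^\kappa}\bigg\vert_\kappa \gamma = \omega_\mu^{\cusp},
\]
which is the assertion, and by definition $\Omega_\mu^{\cusp}$ spans the space of locally harmonic Maass forms.

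The main obstacle is the very first step: identifying the correct special function $g$ and proving the two precise analytic facts about the associated seed --- that its $\xi_{-\kappa}$-image is \emph{exactly} the holomorphic seed $(\mu,Z_L)^{-\kappa}$, and that it is locally $L^1$ with a controlled singularity along $T_\mu$. For $n = 1$ this is the one-variable computation of \cite{BringmannLocallyHarmonic}; for general $n$ it has to be carried out in the tube-domain coordinates and requires handling the $n$-dimensional geometry of the real-analytic cycle $T_\mu$, which is also exactly what makes the companion current equation delicate. The remaining points --- convergence of the Poincaré series, $\Gamma$-invariance, and the interchange of $\xi_{-\kappa}$ with the summation --- are routine within the framework set up in the earlier sections.
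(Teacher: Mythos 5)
Your overall strategy is the same as the paper's: build a single $\Gamma_\mu$-invariant seed form whose $\xi_{-\kappa}$-image is $(\mu,\psi(Z))^{-\kappa}$, with the seed expressed through a one-variable special function of hypergeometric/incomplete-Beta type (this is exactly the paper's $\tilde{p}_Z(\mu)$ of \thref{def:pForm} and \thref{thm:DifferentialEq}), then sum over $\Gamma_\mu\backslash\Gamma$, check convergence, and apply $\xi_{-\kappa}$ term by term; harmonicity via $\Delta_{-\kappa}=\xi_\kappa\xi_{-\kappa}$ and holomorphy of the image is also the paper's argument. So the architecture is fine.

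However, there is a genuine error in the analytic core: you locate the singular support of the seed on the locus $(\mu, Z_L)=0$, which you call $T_\mu=\mu^\perp\cap\IH_n$. For $q(\mu)>0$ this locus is \emph{empty}: since $q(\mu_{Z^+})=\lvert(\mu,\psi(Z))\rvert^2/(4q(Y))$ and $q(\mu)=q(\mu_{Z^+})+q(\mu_{Z^-})$ with $q(\mu_{Z^-})\le 0$, the linear form $(\mu,\psi(Z))$ never vanishes on $\IH_n$. The divisor picture (zero set of the linear form) is the correct one only for negative-norm vectors, i.e.\ for $T_\nu$, $\omega_\nu^{\mero}$ and the polar case. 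The actual singularity of the seed sits where the hypergeometric variable $q(\mu)/q(\mu_{Z^+})$ reaches $1$ and where the explicit factor $q(\mu_{Z^-})^{-n/2}$ blows up, namely on the real-analytic cycle $C_\mu=\{q(\mu_{Z^-})=0\}$, equivalently $\lvert(\mu,\psi(Z))\rvert^2=4q(\mu)q(Y)$ --- a non-complex-analytic condition of real codimension $n$ (your identification of the cycle with the symmetric space of $\Orth(1,n)$ is right, but its defining equation is not). As stated, your argument would either conclude the seed is singularity-free or place the singularities on the wrong set, and this mislocation would also propagate into the convergence discussion (local finiteness must be argued for the translates $\gamma C_\mu$, in the paper via the finiteness of $\{\lambda\in\Gamma\mu:\ \pm q(\lambda_{Z^\pm})<\varepsilon \text{ for some } Z \text{ in a compact set}\}$) and into the later current equation. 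Separately, the claim that covariance \emph{forces} the seed to have your ansatz shape is asserted but not proved; the paper avoids this by simply exhibiting $\tilde{p}_Z(\mu)$ and verifying $\xi_{-\kappa}\tilde{p}_Z(\mu)=(\mu,\psi(Z))^{-\kappa}$ through the explicit computations of \thref{lem:pwedgep}--\thref{lem:DifferentialEq}.
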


These $\xi_{-\kappa}$-preimages can be considered as a distribution on $\calA^{n,n-1}(\Gamma \bs \IH_n, \calL_{-1})$ via
$$[\Omega_\mu^{\cusp}](H) = \int_{\Gamma \bs \IH_n} H(Z) \wedge \overline{*}_{-\kappa} \Omega_\mu^{\cusp}(Z)$$
and we define the distributional $\xi_{-\kappa}$-operator by
$$(\xi_{-\kappa}[\Omega_\mu^{\cusp}])(h) = -[\Omega_\mu^{\cusp}](\xi_\kappa h).$$
Using a Stokes-argument we prove the following current equation, which generalizes \cite[Theorem 8.3]{CrawfordFunke}, \cite[Theorem 8.10]{HilbertPaper}.

\begin{thm}[see \thref{thm:CurrentEq}]
	We have the current equation
	$$\xi_{-\kappa}[\Omega_\mu^{\cusp}] = [\omega_\mu^{\cusp}] + C_{n, \kappa} \delta_{C_\mu},$$
	where $C_{n, \kappa}$ is an explicit constant and
	$$\delta_{C_\mu}(h) = q(\mu)^{\frac{n}{2} - \kappa} \int_{\Gamma_\mu \bs C_\mu} h(Z) (\lambda, \psi(Z))^{\kappa - n} dZ$$
	is the cycle integral of $h$ along $C_\mu$.
\end{thm}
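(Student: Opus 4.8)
The plan is to follow the strategy of \cite[Theorem 8.3]{CrawfordFunke} and \cite[Theorem 8.10]{HilbertPaper}: test the current $\xi_{-\kappa}[\Omega_\mu^{\cusp}]$ against a form $h$, excise a shrinking tubular neighbourhood of the cycle $C_\mu$, apply Stokes' theorem, and identify the interior term as $[\omega_\mu^{\cusp}](h)$ and the boundary term as $C_{n,\kappa}\delta_{C_\mu}(h)$.

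First one checks that $\Omega_\mu^{\cusp}$ is locally integrable on $\Gamma \bs \IH_n$, so that $[\Omega_\mu^{\cusp}]$ is a well-defined current; this follows from the explicit expansion of $\Omega_\mu^{\cusp}$ near $C_\mu$ obtained in the proof of \thref{thm:LocallyHarmonicMF}. For a test form $h$, unwinding the definitions gives
$$(\xi_{-\kappa}[\Omega_\mu^{\cusp}])(h) = -\int_{\Gamma \bs \IH_n} (\xi_\kappa h) \wedge \overline{*}_{-\kappa}\Omega_\mu^{\cusp},$$
and the integrand is smooth away from the set $Z_\mu \subseteq \Gamma \bs \IH_n$, the image of $\Gamma C_\mu$. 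For $\varepsilon > 0$ let $U_\varepsilon$ be an $\varepsilon$-tube around $Z_\mu$ and put $X_\varepsilon = (\Gamma \bs \IH_n) \setminus \overline{U_\varepsilon}$, so that the integral over $\Gamma \bs \IH_n$ is $\lim_{\varepsilon \to 0}$ of the integrals over $X_\varepsilon$. On $X_\varepsilon$ the form $\Omega_\mu^{\cusp}$ is smooth and satisfies $\xi_{-\kappa}\Omega_\mu^{\cusp} = \omega_\mu^{\cusp}$ pointwise, so the integration-by-parts identity for $\xi_{-\kappa}$ (formal adjointness up to an exact term, i.e. a relation of the shape $d\bigl(h \wedge \overline{*}_{-\kappa}\Omega_\mu^{\cusp}\bigr) = \pm (\xi_\kappa h) \wedge \overline{*}_{-\kappa}\Omega_\mu^{\cusp} \mp h \wedge \overline{*}_\kappa \, \xi_{-\kappa}\Omega_\mu^{\cusp}$, recorded in Section~\ref{sec:OrthogonalMF}) together with Stokes' theorem yields
$$-\int_{X_\varepsilon}(\xi_\kappa h) \wedge \overline{*}_{-\kappa}\Omega_\mu^{\cusp} = \int_{X_\varepsilon} h \wedge \overline{*}_\kappa \, \omega_\mu^{\cusp} + \int_{\partial U_\varepsilon} h \wedge \overline{*}_{-\kappa}\Omega_\mu^{\cusp},$$
up to the sign and conjugation conventions of \thref{thm:LocallyHarmonicMF}, and once one has checked that the contribution of the cuspidal boundary vanishes (using the rapid decay of $\omega_\mu^{\cusp}$, the controlled growth of $\Omega_\mu^{\cusp}$, and the truncation at the cusps used elsewhere in the paper). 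Since $\omega_\mu^{\cusp}$ is a genuine cusp form, letting $\varepsilon \to 0$ makes the first term on the right converge to $[\omega_\mu^{\cusp}](h)$.

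It remains to show $\lim_{\varepsilon \to 0}\int_{\partial U_\varepsilon} h \wedge \overline{*}_{-\kappa}\Omega_\mu^{\cusp} = C_{n,\kappa}\,\delta_{C_\mu}(h)$. Here I would unfold: by the construction in the proof of \thref{thm:LocallyHarmonicMF}, $\Omega_\mu^{\cusp}$ is (up to normalisation) a Poincaré series over $\Gamma_\mu \bs \Gamma$ of an explicit form $\varphi_\mu$ on $\IH_n$ whose only singularity is along the single model cycle $C_\mu \subseteq \IH_n$, so the boundary integral becomes $\int_{\Gamma_\mu \bs \partial U_\varepsilon(C_\mu)} h \wedge \overline{*}_{-\kappa}\varphi_\mu$ over an $\varepsilon$-tube around $C_\mu$ alone. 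Since $C_\mu$ is a totally real submanifold of real codimension $n$ in $\IH_n$, I would introduce Fermi coordinates in which $\partial U_\varepsilon(C_\mu)$ is an $(n-1)$-sphere bundle of radius $\varepsilon$ over $C_\mu$, and insert the singular expansion of $\Omega_\mu^{\cusp}$ along $C_\mu$: a bounded (harmonic) part plus a part homogeneous of the precise negative degree in the normal distance for which the flux through $S^{n-1}_\varepsilon$ is $\varepsilon$-independent in the limit, with leading normal coefficient governed by $q(\mu)^{\frac{n}{2} - \kappa}(\lambda, \psi(Z))^{\kappa - n}$. For $n \geq 2$ the bounded part contributes $O(\varepsilon^{n-1}) \to 0$, while the singular part, after performing the integral over the normal $(n-1)$-sphere, converges to a finite limit; that spherical integral produces the explicit constant $C_{n,\kappa}$ (a product of $\Gamma$-values and $\vol(S^{n-1})$), and once the induced Riemannian volume along $C_\mu$ is matched with the measure $dZ$ appearing in $\delta_{C_\mu}$, the leftover integral over $\Gamma_\mu \bs C_\mu$ is exactly $q(\mu)^{\frac{n}{2} - \kappa}\int_{\Gamma_\mu \bs C_\mu} h(Z)(\lambda, \psi(Z))^{\kappa - n}\,dZ$.

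The main obstacle is this final local step: one must control the transverse singularity of $\Omega_\mu^{\cusp}$ along $C_\mu$ precisely enough to (a) prove existence of the $\varepsilon \to 0$ limit of the flux, (b) pin down the exact value of $C_{n,\kappa}$, and (c) recognise the leftover integral as $\delta_{C_\mu}(h)$ in the stated normalisation. The degenerate case $n = 1$, where $\partial U_\varepsilon$ does not shrink and the boundary term is genuinely a jump across $C_\mu$, must be handled separately and should recover \cite[Theorem 8.3]{CrawfordFunke}. A secondary technical point is the global justification of the Stokes argument — integrability near the cusps and the vanishing of the cuspidal boundary contributions — which I would dispatch using the same cusp truncation and growth estimates employed elsewhere in the paper.
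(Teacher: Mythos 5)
Your overall skeleton is the same as the paper's: reduce to a single $\Omega_\mu^{\cusp}$, unfold the Poincar\'e series over $\Gamma_\mu \bs \Gamma$, excise a shrinking tube $B_\varepsilon(C_\mu)$, apply the Stokes identity (\thref{lem:Stokes}) together with $\xi_{-\kappa}\tilde{p}_Z(\mu) = (\mu,\psi(Z))^{-\kappa}$ from \thref{thm:DifferentialEq}, and identify the interior term with $[\omega_\mu^{\cusp}](h)$ (the paper runs the computation in the opposite direction, starting from $[\omega_\mu^{\cusp}](h)$ and producing $\xi_{-\kappa}[\Omega_\mu^{\cusp}](h)$ plus a boundary term, but this is the same argument). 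However, there is a genuine gap: the entire analytic content of the theorem is the evaluation of the boundary flux $\lim_{\varepsilon\to 0}\int_{\Gamma_\mu\bs\partial B_\varepsilon(C_\mu)} h(Z)\,\tilde{p}_Z(\mu)$, which you explicitly defer as ``the main obstacle'' -- existence of the limit, the value of $C_{n,\kappa}$, and the identification of the leftover integral with $q(\mu)^{\frac{n}{2}-\kappa}\int_{\Gamma_\mu\bs C_\mu}h(Z)(\mu,\psi(Z))^{\kappa-n}\,dZ$. In the paper this is exactly \thref{lem:RealAnalCycleIntegral}, proved by a homogeneity reduction to $\mu=b_1$, an explicit parametrization $\varphi_\varepsilon$ of the tube, pullback computations of $\widehat{d\overline{z}_j}$ and of $p_Z(\mu)$, the limit of the hypergeometric factor via \cite[15.4.20]{NIST}, and the spherical/radial integrals producing $C_{n,\kappa}$. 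Since the statement asserts an explicit constant and an explicit cycle-integral normalisation, a proof that stops at ``insert the singular expansion and compute the flux'' does not establish it; your qualitative description of the expansion (bounded part plus a homogeneous singular part with leading coefficient governed by $(\mu,\psi(Z))^{\kappa-n}$) is plausible but is precisely what must be verified.

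Two smaller points. First, your boundary term is written as $\int_{\partial U_\varepsilon} h \wedge \overline{*}_{-\kappa}\Omega_\mu^{\cusp}$, which has the wrong degree: $\overline{*}_{-\kappa}\Omega_\mu^{\cusp}$ is a $(0,1)$-form with values in $\calL_\kappa$, so this cannot be integrated over the $(2n-1)$-dimensional boundary. The correct exact term, as in \thref{lem:Stokes} (which lives in Section~\ref{sec:HermitianGeometry}, not Section~\ref{sec:OrthogonalMF}), is $d\bigl(h\,\Omega_\mu^{\cusp}\bigr)=\overline{\partial}h\wedge\Omega_\mu^{\cusp}+h\,\overline{\partial}\Omega_\mu^{\cusp}$, so the boundary integrand is $h\,\Omega_\mu^{\cusp}$ (after unfolding, $h\,\tilde{p}_Z(\mu)$). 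Second, the constant is not simply a product of $\Gamma$-values and $\vol(S^{n-1})$; in the paper it involves $(n-1)\vol(S_{n-2})$ and a residual radial integral coming from the anisotropic shape of the tube, which is another indication that the local computation cannot be waved through by a generic Fermi-coordinate argument without doing the bookkeeping.
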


As an application we use this current equation to recover the Fourier expansion of the Oda lift, see \thref{cor:OdaLift} in Section \ref{sec:OdaLift}.

We want to mention that our definition of locally harmonic Maass form is slightly more restrictive than the original definition of \cite{BringmannLocallyHarmonic}, see \thref{rem:LocallyHarmonicMaassForm}.

Similarly, we introduce the notion of polar harmonic Maass forms, extending the definitions of \cite{BringmannLocallyHarmonic}, \cite[Section 5]{HilbertPaper} to general signature $(2, n)$ and prove the following

\begin{thm}[see \thref{thm:PolarHarmonicMF}]
	There exists a real analytic $(n,n-1)$-form with values in $\calL_{-\kappa}$ denoted by $\Omega_{\nu}^{\mero}$ that is harmonic outside the algebraic cycle $T_\nu$ such that $\xi_{-\kappa} \Omega_\nu^{\mero} = \omega_\nu^{\mero}$. The space generated by $\Omega_\nu^{\mero}$ is called the space of polar harmonic Maass forms.
\end{thm}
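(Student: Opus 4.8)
The plan is to follow the construction behind \thref{thm:LocallyHarmonicMF}, exploiting that $\omega_\nu^{\mero}$ is literally the same Poincaré series as $\omega_\mu^{\cusp}$ with the sign of the norm of the index vector reversed. One first produces a ``seed'' $(n,n-1)$-form $\varphi_\nu$ with values in $\calL_{-\kappa}$ that is $\Gamma_\nu$-equivariant under $\big\vert_{-\kappa}$ and satisfies $\xi_{-\kappa}\varphi_\nu = (\lambda, \psi(\cdot))^{-\kappa}$ on $\IH_n \setminus T_\nu$, where $\psi(Z) = Z + e - q(Z)e'$ and $(\lambda, \psi(\cdot))^{-\kappa}$ is the $\gamma = 1$ summand of $\omega_\nu^{\mero}$, and then sets
\[
	\Omega_\nu^{\mero}(Z) = \sum_{\gamma \in \Gamma_\nu \bs \Gamma} \varphi_\nu(Z) \big\vert_{-\kappa} \gamma .
\]
The decisive difference with the cuspidal case is geometric: for $q(\mu) > 0$ the complement $\mu^\perp$ has signature $(1, n)$ and does not meet $\IH_n$, so the seed is harmonic away from a real analytic wall $C_\mu$; for $q(\nu) < 0$ the complement $\nu^\perp$ has signature $(2, n-1)$, so $T_\nu = \nu^\perp \cap \IH_n$ is a genuine codimension-one sub-domain, and the seed acquires an honest pole of order $\kappa$ along it, matching the singularity of $\omega_\nu^{\mero}$.

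First I would revisit the explicit seed from \thref{thm:LocallyHarmonicMF}. It is built from a special function of incomplete-beta or Gauss-hypergeometric type evaluated at a single $O^+(L \otimes \IR)$-invariant ratio of roughly the shape $q(\nu)\, q(Y)\big/\big((\nu, \psi(Z))\,(\nu, \overline{\psi(Z)})\big)$. I would check that this expression still defines a form when $q(\nu) < 0$, the relevant branch point of the special function being now attained exactly along $T_\nu$. The pointwise identity $\xi_{-\kappa}\varphi_\nu = (\lambda, \psi(\cdot))^{-\kappa}$ should persist on $\IH_n \setminus T_\nu$ by the same differential-form computation as in the cuspidal case, since that computation is local and insensitive to the sign of $q(\nu)$ away from its zero set; the $\Gamma_\nu$-equivariance of $\varphi_\nu$ follows because $T_\nu$ and the invariant ratio depend on $\nu$ only through $\nu^\perp$, which $\Gamma_\nu$ stabilizes.

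Next I would analyze $\varphi_\nu$ near $T_\nu$: the special-function factor should degenerate so that $\varphi_\nu$ has a singularity of order $\kappa$ along $T_\nu$ (matching that of $\omega_\nu^{\mero}$) and is real analytic on $\IH_n \setminus T_\nu$. For $\kappa > n$ I would then establish locally uniform convergence of the Poincaré series on $\IH_n$ minus the preimage $\bigcup_{\gamma \in \Gamma_\nu \bs \Gamma} \gamma T_\nu$ of the special divisor — a locally finite family by proper discontinuity of the $\Gamma$-action — together with the series obtained by applying $\xi_{-\kappa}$ or the Laplace operator termwise, the required growth bounds on $\varphi_\nu$ away from $T_\nu$ being of the same order as those on $|(\lambda, \psi(\cdot))|^{-\kappa}$ so that convergence reduces to the known convergence of $\omega_\nu^{\mero}$. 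This gives that $\Omega_\nu^{\mero}$ is a real analytic $(n,n-1)$-form on $\IH_n$ with singularities only along $\bigcup_\gamma \gamma T_\nu$, harmonic there (each summand being harmonic outside a $\Gamma$-translate of $T_\nu$, with $\xi_{-\kappa}$ and the Laplacian commuting with the slash), and
\[
	\xi_{-\kappa}\Omega_\nu^{\mero} = \sum_{\gamma \in \Gamma_\nu \bs \Gamma} \big(\xi_{-\kappa}\varphi_\nu\big)\big\vert_\kappa \gamma = \sum_{\gamma \in \Gamma_\nu \bs \Gamma} (\lambda, \psi(\cdot))^{-\kappa}\big\vert_\kappa\gamma = \omega_\nu^{\mero},
\]
using that $\xi_{-\kappa}$ intertwines $\big\vert_{-\kappa}$ with $\big\vert_\kappa$. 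Passing to $\Gamma \bs \IH_n$ yields the asserted polar harmonic Maass form.

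The step I expect to be the main obstacle is the local analysis of the seed along $T_\nu$: one must show that $\varphi_\nu$ is genuinely real analytic off $T_\nu$ and isolate its pole of order $\kappa$, so that the bare identity $\xi_{-\kappa}\Omega_\nu^{\mero} = \omega_\nu^{\mero}$ away from $T_\nu$ is not concealing a residual distributional contribution (the corresponding current-equation refinement being a separate matter), and at the same time extract growth estimates uniform over the $\Gamma$-translates that are sharp enough for the comparison with the convergence of $\omega_\nu^{\mero}$; all of this rests on a careful expansion of the defining special function near its branch locus $T_\nu$. Once the seed is understood, the remainder is the same formal package — intertwining of $\xi_{-\kappa}$ with the slash operators, preservation of harmonicity, and $\Gamma_\nu \bs \Gamma$-averaging — already in place for $\Omega_\mu^{\cusp}$.
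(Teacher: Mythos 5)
Your proposal follows essentially the same route as the paper: the seed is exactly the form $\tilde{p}_Z(\nu)$ already used for the cuspidal case, the identity $\xi_{-\kappa}\tilde{p}_Z(\nu) = (\nu,\psi(Z))^{-\kappa}$ holds for either sign of $q(\nu)$ away from the singular locus, convergence is obtained by comparison with $\omega_\nu^{\mero}$ after rewriting the hypergeometric factor in terms of $q(\nu_{Z^-})$ (\thref{rem:pRewriting}), and the averaged series inherits modularity, harmonicity and the $\xi$-equation termwise. The only slip is your description of the seed's singularity: along $T_\nu$ it arises from the antiholomorphic factor $(\nu, \psi(\overline{Z}))^{1-\kappa}$, so it is of order $\kappa-1$ rather than an order-$\kappa$ pole matching $\omega_\nu^{\mero}$, but this does not affect the structure of the argument.
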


Again, this definition is more restrictive then the definition in \cite{BringmannPolar}, see \thref{rem:PolarHarmonicMaassForms}.

Using this preimage and a Stokes' argument we deduce

\begin{thm}[see \thref{thm:OrthogonalityOnCF}]
	Let $h : \IH_n \to \IC$ be a real analytic cusp form. Then
	$$\langle h, \omega_\nu^{\mero} \rangle_{\reg} = 0,$$
	where $\langle \cdot, \cdot \rangle_{\reg}$ is a regularized Petersson inner product, see \thref{def:RegularizedInnerProduct}.
\end{thm}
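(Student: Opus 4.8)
The plan is to realise $\langle h, \omega_\nu^{\mero} \rangle_{\reg}$ as a regularized integral of a top-degree differential form on $\Gamma \bs \IH_n$, to use the $\xi_{-\kappa}$-preimage $\Omega_\nu^{\mero}$ from \thref{thm:PolarHarmonicMF} to render the integrand essentially exact, and then to apply Stokes' theorem; the orthogonality will follow from the vanishing of all boundary contributions. By \thref{def:RegularizedInnerProduct}, $\langle h, \omega_\nu^{\mero} \rangle_{\reg}$ is the limit as $\varepsilon \to 0$ of the integral over the complement in $\Gamma \bs \IH_n$ of an $\varepsilon$-tubular neighbourhood $U_\varepsilon$ of $\Gamma T_\nu$ of the Petersson density attached to $h$ and $\omega_\nu^{\mero}$; no truncation at the cusps is needed because $h$ is cuspidal. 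Writing $\omega_\nu^{\mero} = \xi_{-\kappa} \Omega_\nu^{\mero}$ and using the compatibility of $\xi_{-\kappa}$ with $d$ and the Hodge star $\overline{*}_{-\kappa}$, this density equals, up to an explicit nonzero constant, $h \wedge d\Omega_\nu^{\mero} = d(h \wedge \Omega_\nu^{\mero}) - dh \wedge \Omega_\nu^{\mero}$ on the locus where $\Omega_\nu^{\mero}$ is harmonic. For bidegree reasons $dh \wedge \Omega_\nu^{\mero} = \bar\partial h \wedge \Omega_\nu^{\mero}$, and this ``shadow'' term is controlled by $\xi_\kappa h$: if $h$ is holomorphic it vanishes outright, while for a general real analytic cusp form $\xi_\kappa h$ is smooth and rapidly decaying at the cusps, so a further application of the self-adjointness of $\xi$ turns its contribution into boundary terms of the same shape as those treated below.

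Applying Stokes' theorem to $(\Gamma \bs \IH_n) \setminus U_\varepsilon$, truncated at each cusp at level $T$, then expresses $\langle h, \omega_\nu^{\mero} \rangle_{\reg}$ as a sum of integrals over horospherical neighbourhoods of the cusps and integrals over $\partial U_\varepsilon$. The cuspidal boundary integrals tend to $0$ as $T \to \infty$: by cuspidality $h$ (hence $\bar\partial h$) decays exponentially at the cusps, whereas $\Omega_\nu^{\mero}$ grows at most polynomially there, as can be read off from the Fourier--Jacobi expansion used to construct it in \thref{thm:PolarHarmonicMF}.

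The crux --- and the step I expect to be the main obstacle --- is the limit as $\varepsilon \to 0$ of the integrals over $\partial U_\varepsilon$, which is a circle bundle over $\Gamma T_\nu$. Here one must use the precise local structure of $\Omega_\nu^{\mero}$ near $T_\nu$ from \thref{thm:PolarHarmonicMF}: since $\Omega_\nu^{\mero}$ is a $\xi_{-\kappa}$-preimage of the meromorphic form $\omega_\nu^{\mero}$, which has a pole of order $\kappa$ along $T_\nu$ and is holomorphic transverse to it, the singular part of $\Omega_\nu^{\mero}$ in a normal coordinate $w$ to $T_\nu$ involves only antiholomorphic negative powers $\overline{w}^{-j}$ with $j \le \kappa - 1$, together with $\log\lvert w \rvert$ and regular terms, and in particular no holomorphic pole $w^{-j}$. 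Integrating the smooth restriction of $h$ against such a form over the radius-$\varepsilon$ normal circles, parametrised by $\overline{w} = \varepsilon^2 / w$, one has to check that the relevant residues vanish in the limit, so that --- in contrast with the real analytic cycle $C_\mu$ of \thref{thm:CurrentEq}, whose singularity produces the term $\delta_{C_\mu}$ --- no residual cycle integral of $h$ along $T_\nu$ survives. Verifying this cancellation, i.e. identifying exactly which Laurent components of $\Omega_\nu^{\mero}$ the boundary form pairs with and confirming that their residue is zero, is the technical heart of the argument; granting it, all boundary contributions vanish and $\langle h, \omega_\nu^{\mero} \rangle_{\reg} = 0$.
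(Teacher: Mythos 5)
Your strategy is the same as the paper's: write $\omega_\nu^{\mero} = \xi_{-\kappa}\Omega_\nu^{\mero}$, apply Stokes' theorem (in the form of \thref{lem:Stokes}) on the complement of a tube around $T_\nu$, let the cusp contributions die by cuspidality, and argue that the integrals over $\Gamma_\nu \bs \partial B_\varepsilon(T_\nu)$ vanish as $\varepsilon \to 0$ because the singularity of $\Omega_\nu^{\mero}$ along $T_\nu$ is purely antiholomorphic. You also locate the decisive point correctly. The genuine gap is that you do not prove it: the vanishing of the tube-boundary limits is exactly the paper's \thref{ref:AntiHolResidueThm}, and you explicitly ``grant'' it, so what you have is an outline rather than a proof. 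The paper closes this step as follows. By \thref{rem:pRewriting}, near $T_\nu$ the form $\tilde{p}_Z(\nu)$ is a real analytic $(n,n-1)$-form divided by the single antiholomorphic factor $(\nu, \psi(\overline{Z}))^{\kappa-1}$; the factors $q(\nu_{Z^-})^{\frac{n}{2}-1}$, $p_Z(\nu)$ and the hypergeometric factor are smooth there because $q(\nu_{Z^-})$ stays away from $0$ when $q(\nu)<0$ (in particular no $\log\lvert w\rvert$ terms occur, contrary to what you allow for). Then \thref{ref:AntiHolResidueThm}, applied to the complex conjugate, gives $\lim_{\varepsilon\to 0}\int_{\Gamma_\nu \bs \partial B_\varepsilon(T_\nu)} H(Z)/(\nu,\psi(Z))^\kappa = 0$ for any real analytic cuspidal $H$; its proof is a direct local computation in the normal coordinate $z_n$: only one component of $H$ survives restriction to the boundary, and each term of its Taylor expansion either integrates to zero over the circle of radius $\varepsilon\sqrt{q(Y')}$ or carries a positive power of that radius (this is the cited [HilbertPaper, Lemma 8.2]). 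So the mechanism is an elementary estimate on circle averages of an (anti)holomorphic pole of order at most $\kappa-1$ against a smooth density, not a delicate residue cancellation; supplying this lemma is what your proposal is missing, and without it the theorem is not established.

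Two smaller remarks. First, your disposal of the term $\overline{\partial}h \wedge \Omega_\nu^{\mero}$ for non-holomorphic $h$ (``a further application of the self-adjointness of $\xi$ turns its contribution into boundary terms of the same shape'') is only a gesture: after one integration by parts the leftover volume term is $-\int \xi_\kappa h \wedge \overline{*}_{-\kappa}\Omega_\nu^{\mero}$, and if you move the derivative back you must check carefully that what reappears is the original pairing plus boundary terms whose singularities along $T_\nu$ are again of the pole type covered by \thref{ref:AntiHolResidueThm}; this can be organized, but it needs to be written out, not asserted. Second, the tube used in the paper has radius $\varepsilon\sqrt{q(Y')}$ rather than constant radius, which matters when making the boundary parametrization and the $\varepsilon\to 0$ limit precise.
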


In particular, we obtain $\langle \omega_\mu^{\cusp}, \omega_\nu^{\mero} \rangle_{\reg} = 0$. Using the $\xi_{-\kappa}$-preimage of $\omega_\mu^{\cusp}$ instead, we obtain the following duality theorem, which generalizes \cite[Theorem 8.8, Theorem 8.9]{HilbertPaper}.

\begin{thm}[see \thref{thm:DualityTheorem}]
	Assume that the cycles $C_\mu$ and $T_\nu$ do not intersect. Then
	$$C_{n, \kappa} \delta_{C_\mu}(\omega_\nu^{\mero}) = \delta_{T_\nu}(\Omega_\mu^{\cusp}),$$
	where $\delta_{T_\nu}(\Omega_\mu^{\cusp})$ is a cycle integral over the algebraic cycle $T_\nu$, see Section \ref{sec:PolarHarmonicMF}.
\end{thm}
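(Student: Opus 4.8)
The plan is to feed a regularized version of $\omega_\nu^{\mero}$ into the current equation of \thref{thm:CurrentEq} and to collapse the three resulting terms using the orthogonality statement \thref{thm:OrthogonalityOnCF}. Fix a $\Gamma$-invariant smooth cutoff $\chi_\epsilon$ which equals $1$ outside an $\epsilon$-tubular neighbourhood of the $\Gamma$-translates of $T_\nu$ and vanishes on a smaller such neighbourhood, and set $h_\epsilon := \chi_\epsilon\,\omega_\nu^{\mero}$. Cutting off the pole of $\omega_\nu^{\mero}$ along $T_\nu$ makes $h_\epsilon$ a smooth weight-$\kappa$ form which still decays rapidly towards the cusps (as $\omega_\nu^{\mero}$ is a meromorphic cusp form), hence an admissible test form. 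Evaluating \thref{thm:CurrentEq} at $h_\epsilon$ and using $(\xi_{-\kappa}[\Omega_\mu^{\cusp}])(h_\epsilon) = -[\Omega_\mu^{\cusp}](\xi_\kappa h_\epsilon)$ yields
$$-[\Omega_\mu^{\cusp}](\xi_\kappa h_\epsilon) = \langle h_\epsilon, \omega_\mu^{\cusp}\rangle + C_{n,\kappa}\,\delta_{C_\mu}(h_\epsilon),$$
and the argument consists in letting $\epsilon\to 0$ in each of the three terms.

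For the right-hand side, the disjointness hypothesis $C_\mu\cap T_\nu=\emptyset$ is used twice. First, $\chi_\epsilon\equiv 1$ in a neighbourhood of $C_\mu$ for all small $\epsilon$, so $\delta_{C_\mu}(h_\epsilon) = \delta_{C_\mu}(\omega_\nu^{\mero})$ independently of $\epsilon$; note this cycle integral is already well defined because $\omega_\nu^{\mero}$ is real analytic along $C_\mu$. Second, $\langle h_\epsilon, \omega_\mu^{\cusp}\rangle$ is the truncated Petersson integral of $\chi_\epsilon\omega_\nu^{\mero}$ against $\omega_\mu^{\cusp}$; since $\omega_\mu^{\cusp}$ decays rapidly at the cusps, the only regularization at issue is the one around $T_\nu$, so that $\lim_{\epsilon\to 0}\langle h_\epsilon,\omega_\mu^{\cusp}\rangle$ is the regularized Petersson product of $\omega_\nu^{\mero}$ and $\omega_\mu^{\cusp}$ (\thref{def:RegularizedInnerProduct}), which vanishes by \thref{thm:OrthogonalityOnCF} applied to the cusp form $\omega_\mu^{\cusp}$ together with conjugate-symmetry of the Petersson integral. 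Hence $\lim_{\epsilon\to 0}\bigl(-[\Omega_\mu^{\cusp}](\xi_\kappa h_\epsilon)\bigr) = C_{n,\kappa}\,\delta_{C_\mu}(\omega_\nu^{\mero})$.

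It remains to identify the left-hand side in the limit. Since $\omega_\nu^{\mero}$ is meromorphic, $\xi_\kappa\omega_\nu^{\mero}$ vanishes off $T_\nu$, so $\xi_\kappa h_\epsilon$ is supported in a tubular shell about $T_\nu$ that shrinks as $\epsilon\to 0$, and $-[\Omega_\mu^{\cusp}](\xi_\kappa h_\epsilon)$ becomes an integral concentrating on $T_\nu$. Here the hypothesis $C_\mu\cap T_\nu=\emptyset$ enters once more: $\Omega_\mu^{\cusp}$ is harmonic, in particular real analytic, on a neighbourhood of $T_\nu$, so it can be Taylor-expanded transversally to $T_\nu$ and paired with the order-$\kappa$ principal part of $\omega_\nu^{\mero}$. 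Passing to holomorphic coordinates in which $T_\nu$ is a coordinate hyperplane and carrying out the shell integral, a Cauchy/residue computation shows that in the limit only one combination of the transversal jets of $\Omega_\mu^{\cusp}$ survives and that it produces exactly the cycle integral $\delta_{T_\nu}(\Omega_\mu^{\cusp})$ over the algebraic cycle $T_\nu$ defined in Section~\ref{sec:PolarHarmonicMF}. Comparing the two limits gives $C_{n,\kappa}\,\delta_{C_\mu}(\omega_\nu^{\mero}) = \delta_{T_\nu}(\Omega_\mu^{\cusp})$.

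The main obstacle is precisely this local computation near $T_\nu$. Because the pole of $\omega_\nu^{\mero}$ has order $\kappa>n$, the shell integral a priori involves the whole $(\kappa-1)$-jet of $\Omega_\mu^{\cusp}$ transverse to $T_\nu$, and one must verify that all contributions beyond the residue term cancel in the limit and that the surviving term matches the normalization of $\delta_{T_\nu}$ from Section~\ref{sec:PolarHarmonicMF}; the orientation conventions and the value of the constant must be tracked so as to be consistent with \thref{thm:CurrentEq}. (An essentially equivalent route is to redo the Stokes argument underlying \thref{thm:CurrentEq} directly on $\Gamma\bs\IH_n$ with tubular neighbourhoods of both $C_\mu$ and $T_\nu$ — and of the cusps — removed: the interior contribution vanishes by harmonicity of $\Omega_\mu^{\cusp}$ off $C_\mu$ and by \thref{thm:OrthogonalityOnCF}, the cusp boundary terms vanish by the decay of $\omega_\nu^{\mero}$ and the moderate growth of $\Omega_\mu^{\cusp}$, and the remaining two boundary terms yield $C_{n,\kappa}\,\delta_{C_\mu}(\omega_\nu^{\mero})$ and $\delta_{T_\nu}(\Omega_\mu^{\cusp})$ respectively.)
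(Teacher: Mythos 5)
Your strategy is viable, and in fact the parenthetical ``equivalent route'' at the end of your proposal is essentially the paper's own proof: the paper starts from $\langle \omega_\mu^{\cusp}, \omega_\nu^{\mero}\rangle^{\reg}=0$ (\thref{thm:OrthogonalityOnCF}), applies \thref{lem:Stokes} on $\Gamma\bs\IH_n$ with tubular neighbourhoods of both $C_\mu$ and $T_\nu$ removed, using the preimage $\Omega_\mu^{\cusp}$ of $\omega_\mu^{\cusp}$ and the holomorphy of $\omega_\nu^{\mero}$ to kill the bulk term, and then evaluates the two boundary limits: the one at $C_\mu$ by \thref{lem:RealAnalCycleIntegral} (giving $-C_{n,\kappa}\delta_{C_\mu}(\omega_\nu^{\mero})$ after unfolding), and the one at $T_\nu$ by unfolding $\omega_\nu^{\mero}$ and invoking the \emph{definition} of $\delta_{T_\nu}$ as the limit of the tube-boundary integrals of $\Omega_\mu^{\cusp}(Z)/(\nu,\psi(Z))^{\kappa}$. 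Your main route -- plugging a cutoff $h_\epsilon=\chi_\epsilon\omega_\nu^{\mero}$ into \thref{thm:CurrentEq} -- is a repackaging of the same Stokes argument with the current equation used as a black box, which is fine in principle.

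However, as written your main route has two steps that are asserted rather than proved, and the first of these is not as innocuous as you suggest. (1) The claim $\lim_{\epsilon\to0}\langle h_\epsilon,\omega_\mu^{\cusp}\rangle=\overline{\langle\omega_\mu^{\cusp},\omega_\nu^{\mero}\rangle^{\reg}}$ is not automatic: the order-$\kappa$ pole of $\omega_\nu^{\mero}$ along $T_\nu$ is not absolutely integrable, so the contribution of the transition shell where $0<\chi_\epsilon<1$ does not vanish for trivial reasons; it vanishes only because of the angular cancellation against the anti-holomorphic factor $\overline{\omega_\mu^{\cusp}}$ (this is exactly what \thref{ref:AntiHolResidueThm} encodes), and for this you should choose $\chi_\epsilon$ as a function of the tube coordinate defining $B_\epsilon(T_\nu)$ and carry out that fibre-circle computation. (2) Your identification of $\lim_{\epsilon\to0}\bigl(-[\Omega_\mu^{\cusp}](\xi_\kappa h_\epsilon)\bigr)$ with $\delta_{T_\nu}(\Omega_\mu^{\cusp})$, which you yourself flag as the main obstacle, is left as a sketch; note though that it is more direct than the jet-cancellation analysis you anticipate, because $\delta_{T_\nu}(H)$ is \emph{defined} in Section \ref{sec:PolarHarmonicMF} precisely as $\lim_{\epsilon\to0}\int_{\Gamma_\nu\bs\partial B_\epsilon(T_\nu)}H(Z)/(\nu,\psi(Z))^{\kappa}$, i.e.\ it already extracts the relevant higher transversal coefficient. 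Writing $\xi_\kappa h_\epsilon\wedge\overline{*}_{-\kappa}\Omega_\mu^{\cusp}=\overline{\partial}\chi_\epsilon\wedge\omega_\nu^{\mero}\,\Omega_\mu^{\cusp}$, unfolding $\omega_\nu^{\mero}$ (the non-identity cosets contribute smooth integrands whose fibre-circle integrals vanish in the limit), and integrating over the level sets of the tube coordinate reduces the shell integral to exactly these boundary integrals, so no separate cancellation of lower-order jet terms is needed -- only orientation and the uniformity needed to exchange the two limits. Finally, be aware that both the disjointness used for $\delta_{C_\mu}(h_\epsilon)=\delta_{C_\mu}(\omega_\nu^{\mero})$ and the smoothness of $\omega_\nu^{\mero}$ near $C_\mu$ must hold for all $\Gamma$-translates of the cycles (i.e.\ the hypothesis is non-intersection in $\Gamma\bs\IH_n$), as in the paper's reduction to $(\mu,\gamma\nu)\neq0$.
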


\begin{rem}
	Throughout the paper we assume that $\Gamma \subseteq O^+(L)$ is a neat congruence subgroup. For more general congruence subgroups $\Gamma \subseteq O^+(L)$, there exists a neat congruence subgroup $\Gamma' \subseteq \Gamma$ of finite index by \cite[p. 117]{Borel} and the treatment of $\Gamma$ can be reduced to $\Gamma'$. This leads to essentially the same formulas.
\end{rem}

\subsection{Outlook}

This work lays the foundation to generalize many of the results involving locally and polar harmonic Maass forms in signature $(2, 1)$ and signature $(2, 2)$ to arbitrary signature $(2, n)$.

In particular, in an upcoming work we will show that certain linear combinations of locally and polar harmonic Maass forms are theta lifts of Poincar\'e series, compare \cite[Theorem 7.3]{HilbertPaper}, \cite[Theorem 1.3(2)]{BringmannThetaLift}, \cite[Theorem 6.2]{BringmannPolar} and we expect that we can obtain similar rationality results for cycle integrals of meromorphic modular forms as in \cite{Alfes}, \cite{HilbertPaper}, \cite{Loebrich}.

Moreover, by \cite[Theorem 1]{Oda} (linear combinations of) the cycle integrals of $\omega_\mu^{\cusp}$ over real analytic cycles are the Fourier coefficients of an elliptic cusp form. In \cite{BringmannGenerating}, a generating series of locally harmonic Maass forms was completed to a modular form and in \cite{Winding}, the modularity of a generating series of cycle integrals of meromorphic differential forms was proven.
Analogously, we expect that (linear combinations of) the cycle integrals $\delta_{C_\mu}(\omega_\nu^{\mero})$ are the Fourier coefficients of a non-holomorphic modular form and that the non-holomorphic part is related to the cycles integrals $\delta_{T_\nu}(\Omega_\mu^{\cusp})$.

\subsection{Outline}

The paper is organized as follows. In Section \ref{sec:OrthogonalMF} we review the theory of orthogonal modular forms and recall the real analytic and algebraic cycles. In Section \ref{sec:HermitianGeometry} we introduce the $\xi_\kappa$-operator. In particular we recall the necessary hermitian geometry. We end the section by some tedious calculations that are needed to obtain the $\xi$-preimages of $\omega_\mu^{\cusp}$ and $\omega_\nu^{\mero}$. In Section \ref{sec:LocallyHarmonicMF} we define locally harmonic Maass forms and prove their current equation in \thref{thm:CurrentEq}. Moreover, we sketch how to recover the Fourier expansion of the Oda lift from the current equation in \thref{cor:OdaLift}. Finally, in Section \ref{sec:PolarHarmonicMF} we introduce polar harmonic Maass forms and define the regularized Petersson inner product. We then show that polar harmonic Maass forms are orthogonal to cusp forms in \thref{thm:OrthogonalityOnCF} and prove the duality \thref{thm:DualityTheorem}.

\subsection*{Acknowledgments}

I would like to thank Claudia Alfes for valuable discussions related to this work and many helpful comments on an earlier draft, which significantly improved this paper. Moreover, I thank R\i zacan \c{C}ilo\u{g}lu for some helpful comments on an earlier draft of this paper.

\section{Orthogonal Modular Forms}\label{sec:OrthogonalMF}

\subsection{Orthogonal Upper Half Plane}

Let $L$ be an even lattice of signature $(2, n), n \in \IN$ with quadratic form $q$ and bilinear form $(\cdot, \cdot)$. Let $e \in L$ be primitive isotropic and $e' \in L'$ with $(e, e') = 1$. Let $\tilde{e}' = e' - q(e') e$, so that $q(\tilde{e}') = 0$. Let $K = L \cap e^\perp \cap e'^\perp = L \cap e^\perp \cap \tilde{e}'^\perp$. We write $V = L \otimes \IQ,\ V(\IR) = L \otimes \IR,\ V(\IC) = L \otimes \IC$ and $W = K \otimes \IQ,\ W(\IR) = K \otimes \IR,\ W(\IC) = K \otimes \IC$. We define
$$\IH_n^{\pm} = \{ Z = X + i Y \in W(\IC)\ \vert\ q(Y) > 0 \}.$$
We have a map $\psi : \IH_n^{\pm} \to V(\IC)$ given by
$$\psi(Z) = Z - q(Z) e + \tilde{e}'.$$
This yields a biholomorphic map
$$\IH_n^{\pm} \to \{ [Z] = [X + iY] \in \IP(V(\IC))\ \vert\ (Z, Z) = 0, (Z, \overline{Z}) > 0 \}.$$
The orthogonal group $O(V(\IR))$ acts naturally on the right hand side, which yields an action on $\IH_n^{\pm}$ via $\sigma Z := \psi^{-1}(\sigma(\psi(Z)))$. By slight abuse of notation we write $\psi(X), \psi(Y)$ for the real and imaginary part of $\psi(X + iY)$.

We define the \emph{factor of automorphy} by
$$j(\sigma, Z) = (e, \sigma(\psi(Z)))$$
so that
$$j(\sigma, Z)\psi(\sigma Z) = \sigma(\psi(Z)).$$
In particular, we have for $\lambda \in V(\IR)$
\begin{align}
(\lambda, \psi(\gamma Z)) = \frac{(\gamma^{-1} \lambda, \psi(Z))}{j(\gamma, Z)}. \label{eq:qZModularity}
\end{align}
Moreover, an easy calculation yields
$$q(\Im(\gamma(Z))) = \frac{q(Y)}{\lvert j(\gamma, Z) \rvert^2}.$$
The space $\IH_n^\pm$ consists of two connected components and we let $\IH_n$ be one of them and $G = O^+(V(\IR)) \subseteq O(V(\IR))$ the index $2$ subgroup that acts on $\IH_n$. We call this the \emph{orthogonal upper half plane}.

Let
$$\Gr(L) = \Gr(V) = \{ (Z^+, Z^-)\ \vert Z^+, Z^- \subseteq V, Z^+ \oplus Z^- = V, Z^+ \perp Z^-, q \vert_{Z^+} > 0, q \vert_{Z^-} < 0 \}$$
be the associated \emph{Grassmannian}. We have an isomorphism
$$\IH_n \to \Gr(L), \qquad Z \mapsto (Z^+, Z^-) := (\langle \psi(X), \psi(Y) \rangle, \langle \psi(X), \psi(Y) \rangle^\perp).$$
For $\lambda \in V(\IR)$ and $Z \in \IH_n$ we write $\lambda_{Z^\pm}$ for the projection of $\lambda$ onto $Z^\pm \subseteq V(\IR)$.

\subsection{Real Analytic and Algebraic Cycles}

For $\mu \in V(\IR)$ with $q(\mu) > 0$ we consider the real analytic submanifold of $\IH_n$ given by
$$C_\mu = \{ Z \in \IH_n\ \vert\ q(\mu_{Z^-}) = 0 \}.$$
It corresponds to the set of $(Z^+, Z^-) \in \Gr(L)$ such that $\mu \in Z^+$ and thus to the Grassmannian of the real quadratic space $\mu^\perp$ of signature $(1, n)$. In particular, it is a hyperbolic space of dimension $n$.

Similarly, for $\nu \in V(\IR)$ with $q(\nu) < 0$ we consider the complex submanifold
$$T_\nu = \{ Z \in \IH_n\ \vert\ q(\nu_{Z^+}) = 0\},$$
which corresponds to the set of $(Z^+, Z^-) \in \Gr(L)$ with $\nu \in Z^-$ and thus corresponds to the Grassmannian of the real quadratic space $\nu^\perp$ of signature $(2, n-1)$, which is isomorphic to $\IH_{n-1}$ and has complex codimension $1$ in $\IH_n$.

We call a congruence subgroup $\Gamma \subseteq O^+(L) = O(L) \cap O^+(V(\IR))$ \emph{neat}, if for all $\gamma \in \Gamma$ the subgroup of $\IC^{\times}$ generated by the eigenvalues of $\gamma$ is torsion-free. From now on, let $\Gamma \subseteq O^+(L)$ always be a neat congruence subgroup. Let $\nu \in V, q(\nu) < 0$ and $\Gamma_\nu$ be the stabilizer of $\nu$ in $\Gamma$. Then $\Gamma_\nu$ acts on the cycle $T_\nu$ and $\Gamma_\nu \bs T_\nu$ defines an algebraic cycle in $\Gamma \bs \IH_n$. Analogously, if $\mu \in V,\ q(\mu) > 0$ and $\Gamma_\mu$ is the stabilizer of $\mu$ in $\Gamma$, then $\Gamma_\mu \bs C_\mu$ defines a real analytic cycle in $\Gamma \bs \IH_n$, see for example \cite[Theorem 4.1]{Funke}.

Since our later results are conditional on the assumption that the cycles $C_\mu$ and $T_\nu$ do not intersect, we mention the following

\begin{lem}
For $\mu, \nu \in V(\IR)$ with $q(\mu) > 0, q(\nu) < 0$. Then $C_\mu \cap T_\nu \neq \emptyset$ if and only if $(\mu, \nu) = 0$.
\end{lem}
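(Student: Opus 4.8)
The plan is to work intrinsically on the Grassmannian model $\Gr(L)$, where both cycles have clean descriptions. Recall that a point $(Z^+, Z^-) \in \Gr(L)$ lies on $C_\mu$ precisely when $\mu \in Z^+$, and on $T_\nu$ precisely when $\nu \in Z^-$. Hence a point lies on $C_\mu \cap T_\nu$ exactly when there is a decomposition $V(\IR) = Z^+ \oplus Z^-$ (orthogonal, with $q\vert_{Z^+} > 0$ of dimension $2$ and $q\vert_{Z^-} < 0$ of dimension $n$) such that $\mu \in Z^+$ and $\nu \in Z^-$.

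\emph{Necessity of $(\mu,\nu)=0$.} This direction is immediate: if such a decomposition exists, then $\mu \in Z^+$ and $\nu \in Z^-$ with $Z^+ \perp Z^-$, so $(\mu,\nu) = 0$.

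\emph{Sufficiency.} Suppose $(\mu,\nu) = 0$. I would construct a point of $C_\mu \cap T_\nu$ directly. Since $q(\mu) > 0$ and $q(\nu) < 0$, the vectors $\mu$ and $\nu$ are anisotropic, and because $(\mu,\nu)=0$ the plane $\IR\mu \oplus \IR\nu$ is nondegenerate of signature $(1,1)$. Consider its orthogonal complement $U = (\IR\mu \oplus \IR\nu)^\perp$, a nondegenerate subspace of signature $(1, n-1)$; pick inside $U$ a positive line $\IR u$ with $q(u) > 0$ and let $U^- = u^\perp \cap U$, which has signature $(0, n-1)$, i.e. is negative definite. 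Now set $Z^+ = \IR\mu \oplus \IR u$ (signature $(2,0)$, positive definite, containing $\mu$) and $Z^- = \IR\nu \oplus U^-$ (signature $(1, n-1)$ — wait, $\IR\nu$ is negative and $U^-$ is negative, so signature $(0,n)$, i.e.\ negative definite, of dimension $n$, containing $\nu$). By construction $Z^+ \perp Z^-$ and $Z^+ \oplus Z^- = V(\IR)$, so $(Z^+, Z^-) \in \Gr(L)$. Under the identification $\Gr(L) \cong \IH_n^\pm$ this point lies in one of the two components; if it lies in $\IH_n$ we are done, and otherwise apply an element of $\Orth(V(\IR)) \setminus \Orth^+(V(\IR))$ that fixes the relevant structure — more carefully, note that $C_\mu$ and $T_\nu$ are by definition nonempty subsets of the single component $\IH_n$, so it suffices to know $C_\mu \cap T_\nu \neq \emptyset$ inside $\IH_n^\pm$ and then use that the intersection, being cut out by $\Orth^+(V(\IR))$-invariant conditions, meets $\IH_n$ as soon as it is nonempty; alternatively one checks that the reflection in $\mu^\perp$ preserves both $C_\mu$ and $T_\nu$ and swaps the two components.

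\textbf{Main obstacle.} The only delicate point is the bookkeeping with the two connected components of $\IH_n^\pm$: the statement is phrased for $C_\mu, T_\nu \subseteq \IH_n$, whereas the natural construction above produces a point in $\IH_n^\pm$. I would resolve this by exhibiting an explicit isometry in $\Orth(V(\IR))$ that stabilizes both cycles (for instance $-\id$ on the span of $\mu$ and $\nu$ composed appropriately, or the reflection $\sigma_\mu$ in $\mu^\perp$, which fixes $\mu$ hence preserves $C_\mu$, fixes $\nu$ since $(\mu,\nu)=0$ hence preserves $T_\nu$, and acts as $-1$ on a positive line so interchanges the two components) — thereby transporting a point of the intersection in the "wrong" component into $\IH_n$. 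The rest is linear algebra over $\IR$ with signatures, which is routine.
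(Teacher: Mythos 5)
Your proof is correct and takes essentially the same route as the paper: the paper's (very terse) argument also runs through the Grassmannian description, observing that $Z \in C_\mu \cap T_\nu$ iff $\mu \in Z^+$ and $\nu \in Z^-$, and you merely write out explicitly the signature bookkeeping for the converse direction that the paper leaves implicit. Your ``main obstacle'' is actually vacuous: the paper identifies $\IH_n \cong \Gr(L)$ (the \emph{unoriented} positive Grassmannian, which is connected) by a bijection, so the pair $(Z^+, Z^-)$ you construct already corresponds to a point of $\IH_n$ lying on both cycles with no component issue to fix; also note the reflection $\sigma_\mu$ sends $\mu$ to $-\mu$ rather than fixing it, which is harmless since $C_\mu$ depends only on the line $\IR\mu$.
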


\begin{proof}
This directly follows from the explicit description of the cycles. We have $Z \in C_\mu \cap T_\nu$ if and only if $\mu \in Z^+, \nu \in Z^-$. But this can only happen, if $\mu \perp \nu$, i.e. $(\mu, \nu) = 0$.
\end{proof}

\subsection{Orthogonal Modular Forms}

For a function $h : \IH_n \to \IC$ we define the \emph{weight $\kappa \in \IZ$ Petersson slash operator} by
$$(h \vert_\kappa \sigma)(Z) = j(\sigma, Z)^{-\kappa} h(\sigma Z), \qquad \sigma \in O^+(V(\IR)).$$
A holomorphic function $h : \IH_n \to \IC$ is called \emph{modular form of weight $\kappa$ with respect to $\Gamma \subseteq O^+(L)$}, if $h \vert_\kappa \sigma = h$ for all $\sigma \in \Gamma$. By the Koecher principle, a modular form is automatically holomorphic at the cusps. Otherwise we will add this as an assumption. If a modular form vanishes at all cusps, we call it a \emph{cusp form}.

We extend the weight $\kappa$ slash operator to differential forms in $\calA^{*, *}(\IH_n)$ in the obvious way. If $\Gamma$ is torsion-free (for example, if $\Gamma$ is neat), then modular forms are the global sections of a hermitian line bundle $\calL_\kappa$ on $\Gamma \bs \IH_n$ whose hermitian metric is given by the Petersson metric $h_1(Z) \overline{h_2(Z)} q(Y)^\kappa$ on the fibers and differential forms on $\Gamma \bs \IH_n$ with values in $\calL_\kappa$ are the same as differential forms on $\IH_n$ that are invariant under the weight $\kappa$ slash operator.

Let $\mu \in V$ with $q(\mu) > 0$ and $\kappa > n$. Following \cite{Oda} we define the orthogonal cusp form
$$\omega_{\mu, \kappa, \Gamma}^{\cusp}(Z) = \sum_{\gamma \in \Gamma_\mu \bs \Gamma} \frac{1}{(\mu, \psi(Z))^\kappa} \bigg\vert_\kappa \gamma.$$
This definition makes sense, since by \eqref{eq:qZModularity} we have
$$(\mu, \psi(\gamma Z)) = \frac{(\gamma^{-1} \mu, \psi(Z))}{j(\gamma, Z)} = \frac{(\mu, \psi(Z))}{j(\gamma, Z)}$$
for $\gamma \in \Gamma_\mu$. Analogously, for $\nu \in V$ with $q(\nu) < 0$ and $\kappa > n$ we define
$$\omega_{\nu, \kappa, \Gamma}^{\mero}(Z) = \sum_{\gamma \in \Gamma_\nu \bs \Gamma} \frac{1}{(\nu, \psi(Z))^\kappa} \bigg\vert_\kappa \gamma.$$
These define meromorphic modular forms with singularities along the $\Gamma$-translates of the algebraic cycle $T_\nu$ of order $\kappa$, but still vanish at the cusps. We will usually write $\omega_{\mu}^{\cusp}$ and $\omega_\nu^{\mero}$, if the weight $\kappa$ and the neat congruence subgroup $\Gamma$ are clear from the context. The following lemma is used to reduce many of the properties of $\omega_\mu^{\cusp}$ and $\omega_\nu^{\cusp}$ to specific choices of $\mu$ and $\nu$.
\begin{lem}\thlabel{lem:omegaProperties}
For $\lambda \in V$ with $q(\lambda) \neq 0$ let $\omega_{\lambda, \kappa, \Gamma}$ be the function $\omega_{\lambda, \kappa, \Gamma}^{\cusp}$, if $q(\lambda) > 0$ and $\omega_{\lambda, \kappa, \Gamma}^{\mero}$, if $q(\lambda) < 0$. Then
\begin{enumerate}[(i)]
	\item For $\lambda \in V$ and $r \in \IQ^\times$ we have $\omega_{r \lambda, \kappa, \Gamma} = r^{-\kappa} \omega_{\lambda, \kappa, \gamma}$.
	\item If $\lambda, \lambda' \in V$ and $\gamma \in \Gamma$ such that $\lambda = \gamma \lambda'$, then $\omega_{\lambda, \kappa, \Gamma} = \omega_{\lambda', \kappa, \Gamma}$.
	\item If $\gamma \in O^+(V(\IR))$, then $\omega_{\lambda, \kappa, \Gamma} \vert_\kappa \gamma = \omega_{\gamma^{-1}\lambda, \kappa, \gamma^{-1} \Gamma \gamma}$, where now $\gamma^{-1} \Gamma \gamma \subseteq O^+(\gamma^{-1} L)$.
\end{enumerate}
\end{lem}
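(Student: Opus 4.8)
All three assertions follow from one transformation identity together with elementary bookkeeping of stabilizers and cosets, so the plan is to record that identity first. For every $\sigma \in O^+(V(\IR))$ and $\lambda \in V(\IR)$ one has
\[
	\frac{1}{(\lambda, \psi(\cdot))^\kappa}\bigg\vert_\kappa \sigma \;=\; \frac{1}{(\sigma^{-1}\lambda, \psi(\cdot))^\kappa},
\]
which is immediate from $(h\vert_\kappa\sigma)(Z) = j(\sigma,Z)^{-\kappa}h(\sigma Z)$ and the transformation law \eqref{eq:qZModularity}. Two consequences of this identity will be used repeatedly: the seed $(\lambda,\psi(\cdot))^{-\kappa}$ is $\vert_\kappa$-invariant under $\Gamma_\lambda$, so the coset sums defining $\omega_{\lambda,\kappa,\Gamma}$ are well posed; and since $\sigma \in O(V(\IR))$ preserves $q$, the sign of $q(\sigma^{-1}\lambda)$ equals that of $q(\lambda)$, so the labels $\cusp$ and $\mero$ are respected throughout. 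I will also use that $\vert_\kappa$ is a right action, i.e. the cocycle relation $j(\sigma\tau,Z)=j(\sigma,\tau Z)j(\tau,Z)$, which drops out of $j(\sigma,Z)\psi(\sigma Z)=\sigma\psi(Z)$. Finally, every rearrangement of the (a priori infinite) coset sums below is legitimized by absolute convergence for $\kappa > n$.

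Part (i) is essentially formal: $\Gamma_{r\lambda}=\Gamma_\lambda$ and $q(r\lambda)=r^2q(\lambda)$ has the same sign as $q(\lambda)$, so the two series run over the same index set and are of the same type; since $(r\lambda,\psi(Z))=r\,(\lambda,\psi(Z))$ and $\vert_\kappa\gamma$ is $\IC$-linear, factoring $r^{-\kappa}$ out of each term yields $\omega_{r\lambda,\kappa,\Gamma}=r^{-\kappa}\omega_{\lambda,\kappa,\Gamma}$.

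For (ii), I would write $\lambda'=\gamma^{-1}\lambda$ with $\gamma\in\Gamma$. By the identity above the seed of $\omega_{\lambda',\kappa,\Gamma}$ equals $(\lambda,\psi(\cdot))^{-\kappa}\vert_\kappa\gamma$, whence
\[
	\omega_{\lambda',\kappa,\Gamma} \;=\; \sum_{\delta\in\Gamma_{\lambda'}\bs\Gamma}\frac{1}{(\lambda,\psi(\cdot))^\kappa}\bigg\vert_\kappa\gamma\delta ,
\]
and I would reindex using $\gamma\Gamma=\Gamma$ and $\gamma\Gamma_{\lambda'}\gamma^{-1}=\Gamma_{\gamma\lambda'}=\Gamma_\lambda$: the assignment $\Gamma_{\lambda'}\delta\mapsto\Gamma_\lambda\gamma\delta$ is a bijection $\Gamma_{\lambda'}\bs\Gamma\to\Gamma_\lambda\bs\Gamma$, so the right-hand side is $\omega_{\lambda,\kappa,\Gamma}$. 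Part (iii) is the mirror image on the right: for $\gamma\in O^+(V(\IR))$ I would write $\delta\gamma=\gamma\eta$ with $\eta=\gamma^{-1}\delta\gamma$, observe that as $\delta$ runs over $\Gamma_\lambda\bs\Gamma$ the element $\eta$ runs over $\gamma^{-1}\Gamma_\lambda\gamma\bs\gamma^{-1}\Gamma\gamma=(\gamma^{-1}\Gamma\gamma)_{\gamma^{-1}\lambda}\bs(\gamma^{-1}\Gamma\gamma)$, and then apply the identity above in the form $(\lambda,\psi(\cdot))^{-\kappa}\vert_\kappa\gamma=(\gamma^{-1}\lambda,\psi(\cdot))^{-\kappa}$ to recognize $\omega_{\lambda,\kappa,\Gamma}\vert_\kappa\gamma$ as $\omega_{\gamma^{-1}\lambda,\kappa,\gamma^{-1}\Gamma\gamma}$.

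I expect no real obstacle here: this is a bookkeeping lemma, and the arithmetic all reduces to \eqref{eq:qZModularity}. The only points deserving a careful word are, in (iii), that $\gamma^{-1}\Gamma\gamma$ is again a neat congruence subgroup, now of $O^+(\gamma^{-1}L)$ (conjugation preserves eigenvalues and carries $O(L)$-congruence subgroups to $O(\gamma^{-1}L)$-congruence subgroups), and that $\omega_{\gamma^{-1}\lambda,\kappa,\gamma^{-1}\Gamma\gamma}$ is to be formed with the same map $\psi$ and the same factor of automorphy $j$ — these are attached to $V(\IR)$ and the fixed isotropic vectors $e,e'$, not to the lattice — so that both sides of (iii) genuinely are functions on the one fixed $\IH_n$. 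Spelling out the two coset reindexings and making these remarks precise is all that the full proof requires.
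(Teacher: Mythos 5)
Your argument is correct and is exactly the routine verification the paper has in mind when it says ``The proof is straightforward'': the single identity $\frac{1}{(\lambda,\psi(\cdot))^\kappa}\big\vert_\kappa\sigma=\frac{1}{(\sigma^{-1}\lambda,\psi(\cdot))^\kappa}$ coming from \eqref{eq:qZModularity}, combined with the coset reindexings $\Gamma_{\lambda'}\delta\mapsto\Gamma_\lambda\gamma\delta$ and $\Gamma_\lambda\delta\mapsto(\gamma^{-1}\Gamma\gamma)_{\gamma^{-1}\lambda}\gamma^{-1}\delta\gamma$, together with $\Gamma_{r\lambda}=\Gamma_\lambda$ for (i). Your added remarks on absolute convergence and on $\psi$, $j$ being attached to $V(\IR)$ rather than to the lattice are accurate and only make explicit what the paper leaves implicit.
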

The proof is straightforward. For
$$\omega^{\mero} = \sum_{\substack{\nu \in \Gamma \bs L' / \IQ^\times \\ q(\nu) < 0}} c(\nu) q(\nu)^{\kappa} \omega_\nu^{\mero},$$
we can thus define the associated divisor
$$T(\omega^{\mero}) = \sum_{\substack{\nu \in \Gamma \bs L' / \IQ^\times \\ q(\nu) < 0}} c(\nu) T_\nu.$$

\begin{rem}
	For $\lambda \in V \setminus \{0\}$ with $q(\lambda) = 0$ one can consider analogously defined series. It turns out that these are given by Eisenstein series as in \cite{Zagier}. See for example \cite{Kiefer}, \cite{KieferNonHolEisenstein}, \cite{Schaps}.
\end{rem}

\section{The $\xi$-Operator on the Orthogonal Upper Half Plane}\label{sec:HermitianGeometry}

In this section we introduce the necessary hermitian geometry on the orthogonal upper half plane to define the differential operator $\xi_{-\kappa}$ and construct $\xi_{-\kappa}$-preimages of $\omega_\nu^{\mero}, \omega_\mu^{\cusp}$. We follow the exposition of \cite[Section 5]{Kiefer}.

\subsection{Hermitian Geometry on the Orthogonal Upper Half Plane}

As before, let $e \in L$ be primitive isotropic, $e' \in L'$ with $(e, e') = 1$ and $\tilde{e}' = e' - q(e') e$. Let $K = L \cap e^\perp \cap e'^\perp$. Let $b_1, \ldots, b_n$ be an orthonormal basis of $W(\IR)$, i.e. $q(b_1) = 1,\ q(b_i) = -1,\ 2 \leq i \leq n$ and $(b_i, b_j) = 0$ for $i \neq j$. We will write $Z = \sum_{j = 1}^n z_j b_j$ for $Z \in \IH_n$. The $(1, 1)$-form
$$\omega = - \frac{i}{2} \partial \overline{\partial} \log(q(Y))$$
defines an $O^+(V(\IR))$-invariant K\"ahler form. We write
$$\omega = \frac{i}{2} \sum_{i,j} h_{ij}(Y) d z_i \wedge d \overline{z}_j,$$
and $h(Y) = (h_{ij}(Y))_{1 \leq i,j \leq n}$, so that
$$h(Y) = \frac{1}{q(Y)} \begin{pmatrix}y_1^2 & -y_1 y_2 & \hdots & \hdots & -y_1 y_n \\
-y_1 y_2 & y_2^2 & y_2 y_3 & \hdots & y_2 y_n \\
\vdots & y_2 y_3 & \ddots & \ddots & \vdots \\
\vdots & \vdots & \ddots & \ddots & y_{n-1} y_n \\
-y_1 y_n & \hdots & \hdots & y_{n-1} y_n & y_n^2 \end{pmatrix} + \frac{1}{2q(Y)} \begin{pmatrix} -1 & & &\\
& 1 & \\
& & \ddots & \\ 
& & & \ddots & \\
& & & & 1\end{pmatrix}$$
defines the corresponding hermitian metric on $\IH_n$. Its inverse $h(Y)^{-1} = (h^{ij}(Y))_{1 \leq i,j \leq n}$ is given by
$$h(Y)^{-1} = 4 \begin{pmatrix}y_1^2 & y_1 y_2 & \hdots & \hdots & y_1 y_n \\
	y_1 y_2 & y_2^2 & y_2 y_3 & \hdots & y_2 y_n \\
	\vdots & y_2 y_3 & \ddots & \ddots & \vdots \\
	\vdots & \vdots & \ddots & \ddots & y_{n-1} y_n \\
	y_1 y_n & \hdots & \hdots & y_{n-1} y_n & y_n^2 \end{pmatrix} + 2 q(Y) \begin{pmatrix} -1 & & &\\
	& 1 & \\
	& & \ddots & \\ 
	& & & \ddots & \\
	& & & & 1\end{pmatrix}.$$
We have $\det(h(Y)) = \frac{1}{2^n q(Y)^n}$, so the volume form is given by
$$d \mu(Z) = \frac{1}{(4i q(Y))^n} d z_1 \wedge d \overline{z}_1 \wedge \ldots \wedge d z_n \wedge d \overline{z}_n.$$
We write
$$\widehat{d z_i} = d z_1 \wedge d \overline{z}_1 \wedge \ldots d \overline{z}_{i-1} \wedge d \overline{z}_{i} \wedge \ldots \wedge d z_n \wedge d \overline{z}_n$$
and defining $\widehat{d \overline{z}_i}$ analogously, we obtain
\begin{align}
	d z_i \wedge \widehat{d z_i} = (4 i q(Y))^n d \mu(Z), \qquad d \overline{z}_i \wedge \widehat{d \overline{z}_i} = -(4 i q(Y))^n d \mu(Z). \label{eq:dzWedgeHatdz}
\end{align}
Moreover, we write $dZ = d z_1 \wedge \ldots d z_n$.

Let $\alpha, \beta \in \calA^{p,q}(\IH_n)$ be $(p,q)$-forms. Then the \emph{Hodge-$\overline{*}$-operator} is defined by the equality
$$\alpha \wedge \overline{*} \beta = \langle \alpha, \beta \rangle d \mu(Z),$$
where $\langle \cdot, \cdot \rangle$ is the inner product on differential forms induced by the hermitian metric. In particular, we have
\begin{align}
	\overline{*} d \overline{z}_i = -\frac{1}{2} \sum_{j = 1}^n \frac{h^{ij}(Y)}{(4i q(Y))^n} \widehat{d \overline{z}_j}. \label{eq:Stardz}
\end{align}

Now let $\Gamma \subseteq O^+(L)$ be a neat congruence subgroup. Then the quotient $\Gamma \bs \IH_n$ is a hermitian manifold with metric induced from the hermitian metric on $\IH_n$. Let $\calL_\kappa$ be the line bundle of modular forms of weight $\kappa$, so that its global sections $\calL_\kappa(\Gamma \bs \IH_n)$ are modular forms. Using the hermitian metric, the dual bundle $\calL_\kappa^*$ can be identified with $\calL_{-\kappa}$ and the mapping $F(Z) \mapsto \overline{F(Z)} q(Y)^\kappa$ defines an anti-linear bundle isomorphism $\calL_\kappa \to \calL_{-\kappa}$. This yields a \emph{Hodge-$\overline{*}_\kappa$-operator} on differential forms with values in $\calL_\kappa$ defined by
$$\overline{*}_\kappa : \calA^{p,q}(\Gamma \bs \IH_n, \calL_\kappa) \to \calA^{n-p, n-q}(\Gamma \bs \IH_n, \calL_{-\kappa}), \qquad \overline{*}_\kappa(\phi \otimes F) = \overline{*} \phi \otimes \overline{F(Z)}q(Y)^\kappa.$$
We obtain an inner product on $\calA^{p,q}(\Gamma \bs \IH_n, \calL_\kappa)$ via
$$\langle H_1, H_2 \rangle = \int_{\Gamma \bs \IH_n} H_1(Z) \wedge \overline{*}_\kappa H_2(Z).$$
As in \cite{HilbertPaper}, \cite{BruinierFunke}, \cite{BruinierFunkeKudla}, we define the \emph{$\xi_\kappa$-operator} by
$$\xi_\kappa = \overline{*}_\kappa \overline{\partial} : \calA^{p, q}(\Gamma \bs \IH_n, \calL_\kappa) \to \calA^{n - p, n - q -1}(\Gamma \bs \IH_n, \calL_{-\kappa}).$$
In particular, on $\calA^{n, n-1}(\Gamma \bs \IH_n, \calL_\kappa)$ we have $\xi_{-\kappa} \xi_\kappa = \Delta_\kappa$, where
$$\Delta_\kappa = \overline{*}_{-\kappa} \overline{\partial} \overline{*}_\kappa \overline{\partial} + \overline{\partial} \overline{*}_{-\kappa} \overline{\partial} \overline{*}_\kappa$$
is the \emph{weight $\kappa$ Laplace operator}. We say that a differential form $H$ with values in $\calL_\kappa$ is \emph{harmonic}, if $\Delta_\kappa H = 0$. We can also view these as differential operators on $\calA^{p, q}(\IH_n)$ and we will do this throughout. As in \cite[Lemma 4.2]{HilbertPaper}, an application of Stokes' theorem yields

\begin{lem}\thlabel{lem:Stokes}
Let $U \subseteq V \subseteq \Gamma \bs \IH_n$ be open with $\overline{U} \subseteq V$. Let $h_1, h_2 \in \calL_\kappa(V)$ be cuspidal and assume $h_2 = \xi_{-\kappa} H_2$ for some $H_2 \in \calA^{n,n-1}(V, \calL_{-\kappa})$. Then
$$\int_U h_1(Z) \overline{h_2(Z)} q(Y)^\kappa\ d \mu(Z) = -\int_U \xi_\kappa h_1(Z) \wedge \overline{*}_{-\kappa} H_2(Z) + \int_{\partial U} h_1(Z) H_2(Z).$$
\end{lem}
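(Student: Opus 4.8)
The plan is to reduce the identity to a single application of Stokes' theorem to the (weight $0$, hence ordinary) $(n,n-1)$-form $h_1 H_2$ on $U$, after rewriting the left-hand integrand. First I would use the hypothesis $h_2 = \xi_{-\kappa}H_2 = \overline{*}_{-\kappa}\overline{\partial}H_2$, where $\overline{\partial}H_2\in\calA^{n,n}(V,\calL_{-\kappa})$: applying $\overline{*}_\kappa$ and using that $\overline{*}_\kappa\overline{*}_{-\kappa}$ is the identity on $\calA^{n,n}(V,\calL_{-\kappa})$ (the double Hodge-$\overline{*}$ on a form of top real degree $2n$ introduces no sign, and the line-bundle identifications $\calL_{-\kappa}\to\calL_\kappa\to\calL_{-\kappa}$ compose to the identity) gives $\overline{\partial}H_2 = \overline{*}_\kappa h_2$. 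Since on $\calA^{0,0}(V,\calL_\kappa)$ the operator $\overline{*}_\kappa$ is multiplication of the volume form $d\mu(Z)$ by $\overline{h_2(Z)}q(Y)^\kappa$, this yields the pointwise identity of $(n,n)$-forms $h_1(Z)\overline{h_2(Z)}q(Y)^\kappa\,d\mu(Z) = h_1(Z)\wedge\overline{*}_\kappa h_2(Z) = h_1(Z)\wedge\overline{\partial}H_2(Z)$, so the left-hand side of the lemma equals $\int_U h_1\wedge\overline{\partial}H_2$.

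Next I would apply the Leibniz rule to $h_1 H_2$: since $h_1$ is a $0$-form, $\overline{\partial}(h_1 H_2) = \overline{\partial}h_1\wedge H_2 + h_1\wedge\overline{\partial}H_2$, while $\partial(h_1 H_2)$ vanishes for type reasons, being of bidegree $(n+1,n-1)$ on the $n$-dimensional complex manifold $\IH_n$; hence $d(h_1 H_2) = \overline{\partial}(h_1 H_2)$. Integrating over $U$ and applying Stokes' theorem therefore gives
\begin{align*}
	\int_U h_1\wedge\overline{\partial}H_2 = \int_{\partial U}h_1 H_2 - \int_U\overline{\partial}h_1\wedge H_2.
\end{align*}
It remains to identify the interior integrand $\overline{\partial}h_1\wedge H_2$ with $\xi_\kappa h_1\wedge\overline{*}_{-\kappa}H_2$. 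Writing $\xi_\kappa h_1 = \overline{*}_\kappa\overline{\partial}h_1$, this is a compatibility of the Hodge-$\overline{*}$-operator with the wedge product, which follows from its defining relation together with the double-$\overline{*}$ identities $\overline{*}_{-\kappa}\overline{*}_\kappa = -\id$ on $\calA^{0,1}(\calL_\kappa)$ and $\overline{*}_\kappa\overline{*}_{-\kappa} = -\id$ on $\calA^{n,n-1}(\calL_{-\kappa})$, the two signs $(-1)^{2n-1}$ being compensated by the sign incurred when re-ordering the wedge product. Combining the displays gives the asserted formula.

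The main work is the bookkeeping in the first and last steps: one has to fix the normalizations and signs of $\overline{*}$, $\overline{*}_\kappa$ and $d\mu$ once and for all (via \eqref{eq:dzWedgeHatdz}, \eqref{eq:Stardz} and $\det h(Y) = (2^n q(Y)^n)^{-1}$) and then check that the double-$\overline{*}$ identities and the wedge-compatibility identity hold with exactly the constants (and conjugations) appearing in the statement — a computation parallel to \cite[Lemma 4.2]{HilbertPaper}, on which the present $\xi$-operator formalism is modelled. The secondary point is the applicability of Stokes' theorem on $U$: this is immediate when $\overline{U}$ is compact with piecewise smooth boundary, and otherwise one exhausts $U$ by such domains, the cuspidality of $h_1$ and $h_2$ (hence decay of $h_1 H_2$ towards any part of $\partial U$ approaching a cusp) guaranteeing convergence of the integrals and that the only surviving boundary term is over $\partial U$. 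In all applications in this paper $U$ is obtained from $\Gamma\bs\IH_n$ by removing tubular neighbourhoods of the relevant cycles together with small neighbourhoods of the cusps, so that $\overline{U}$ is compact and Stokes applies directly.
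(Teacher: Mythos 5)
Your argument is essentially the paper's own proof: rewrite the left-hand side as $\int_U h_1\wedge\overline{\partial}H_2$ using $\overline{*}_\kappa\overline{*}_{-\kappa}=\id$ on top-degree forms, apply the Leibniz rule ($d=\overline{\partial}$ on $h_1H_2$ for type reasons) together with Stokes' theorem, and identify $\overline{\partial}h_1\wedge H_2$ with $\xi_\kappa h_1\wedge\overline{*}_{-\kappa}H_2$ via the defining property of the Hodge-$\overline{*}$-operator. The only differences are cosmetic: the paper justifies that last identification by the isometry property of $\overline{*}$ rather than your double-$\overline{*}$ sign bookkeeping, and it leaves the exhaustion/cuspidality discussion for Stokes implicit, exactly as you note it can be in the applications.
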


\begin{proof}
We have
$$\int_U h_1(Z) \overline{h_2(Z)} q(Y)^\kappa\ d \mu(Z) = \int_U h_1(Z) \wedge \overline{*}_\kappa \xi_{-\kappa} H_2(Z).$$
Using $\overline{*}_\kappa \overline{*}_{-\kappa} = \id$ on $2n$-forms yields $\overline{*}_\kappa \xi_{-\kappa} H_2(Z) = \overline{\partial} H_2(Z)$. Since $H_2 \in \calA^{n, n-1}(V, \calL_{-\kappa})$ we have
$$d (h_1(Z) H_2(Z)) = \overline{\partial} (h_1(Z) H_2(Z)) = \overline{\partial} h_1(Z) \wedge H_2(Z) + h_1(Z) \wedge \overline{\partial} H_2(Z).$$
Hence,
$$\int_U h_1(Z) \wedge \overline{*}_\kappa \xi_{-\kappa} H_2(Z) = -\int_U \overline{\partial} h_1(Z) \wedge H_2(Z) + \int_U d(h_1(Z) H_2(Z)).$$
Since $\overline{*}$ is an isometry, we obtain from the definition of the Hodge-$\overline{*}$-operator
$$\overline{\partial} h_1(Z) \wedge H_2(Z) = \left\langle \overline{\partial} h_1(Z), \overline{*}_{-\kappa} H_2(Z) \right\rangle\ d \mu(Z) = \xi_\kappa h_1(Z) \wedge \overline{*}_{-\kappa} H_2(Z)$$
and an application of Stokes' theorem yields the result.
\end{proof}

\subsection{Some Formulas Involving the $\xi$-Operator}

We will now state and prove some formulas that are needed to define the $\xi$-preimages of $\omega_{\mu}^{\cusp},\ \omega_\nu^{\mero}$. Recall that we have chosen a basis $b_1, \ldots, b_n$ of $W(\IR)$ with $q(b_1) = 1$ and $q(b_j) = -1$ for $2 \leq j \leq n$. Let $\epsilon_i = 1$ for $i = 1$ and $\epsilon_i = -1$ for $i \neq 1$. For $\lambda \in V(\IR)$ write $\lambda = \lambda_e e + \lambda_e' \tilde{e}' + \sum_{j = 1}^n \lambda_j b_j$. Recall that
$$\psi(Z) = Z - q(Z) e + \tilde{e}'.$$
Therefore,
$$(\lambda, \psi(Z)) = \lambda_e - \lambda_e' q(Z) + 2 \sum_{j = 1}^n \epsilon_j \lambda_j z_j, \qquad q(Z) = \sum_{j = 1}^n \epsilon_j z_j^2$$
and
$$\overline{\partial} (\lambda, \psi(\overline{Z})) = 2 \sum_{j = 1}^n \epsilon_j (\lambda_j - \lambda_e' \overline{z}_j)d \overline{z}_j, \qquad \overline{\partial} q(Y) = i \sum_{j = 1}^n \epsilon_j y_j d \overline{z}_j.$$

\begin{lem}\thlabel{lem:pwedgep}
	We have the following formulas.
	\begin{enumerate}[(i)]
		\item
		$$\overline{*}\left(\overline{\partial} (\lambda, \psi(\overline{Z})) \wedge \overline{*} \overline{\partial} (\lambda, \psi(\overline{Z}))\right) = 2 (\lambda, \psi(Y))^2 - 4 q(Y) q(\lambda) + 4(\lambda, \psi(X)) q(Y) \lambda_e'.$$
		\item
		$$\overline{*}\ \frac{\lvert (\lambda, \psi(Z)) \rvert^2 \overline{\partial} q(Y) \wedge \overline{*} \overline{\partial} q(Y)}{q(Y)^2} = \lvert (\lambda, \psi(Z)) \rvert^2.$$
		\item
		$$-2 \Re\left(\overline{*}\ \frac{(\lambda, \psi(Z)) \overline{\partial} (\lambda, \psi(\overline{Z})) \wedge \overline{*} \overline{\partial} q(Y)}{q(Y)}\right) = -2(\lambda, \psi(Y))^2 - 4 (\lambda, \psi(X)) q(Y) \lambda_e'.$$
		\item
		\begin{align*}
			& \overline{*} \left(\frac{q(Y) \overline{\partial} (\lambda, \psi(\overline{Z})) - (\lambda, \psi(\overline{Z})) \overline{\partial} q(Y)}{q(Y)^2} \wedge \overline{*}\ \frac{q(Y) \overline{\partial} (\lambda, \psi(\overline{Z})) - (\lambda, \psi(\overline{Z})) \overline{\partial} q(Y)}{q(Y)^2}\right) \\
			&= -4 \frac{q(\lambda_{Z^-})}{q(Y)}.
		\end{align*}
	\end{enumerate}
\end{lem}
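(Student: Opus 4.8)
The plan is to reduce each of (i)--(iv) to the computation of a pointwise inner product of two $(0,1)$-forms and to evaluate it from the explicit inverse metric. The one fact to record first is that for $(0,1)$-forms $\alpha = \sum_i a_i\, d\overline{z}_i$ and $\beta = \sum_j b_j\, d\overline{z}_j$ one has
$$\overline{*}\bigl(\alpha \wedge \overline{*}\beta\bigr) = \tfrac12 \sum_{i,j} a_i\, \overline{b_j}\, h^{ij}(Y), \qquad h^{ij}(Y) = 4 y_i y_j - 2 q(Y)\, \epsilon_i \delta_{ij},$$
which is immediate from $\alpha \wedge \overline{*}\beta = \langle \alpha, \beta\rangle\, d\mu(Z)$ together with \eqref{eq:Stardz} and \eqref{eq:dzWedgeHatdz}. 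So everything comes down to expanding the right-hand side and translating the resulting polynomials in the $z_i$ back into $(\lambda,\psi(X))$, $(\lambda,\psi(Y))$, $q(\lambda)$ and $q(Y)$.

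For this translation I would first assemble a short dictionary. From $\psi(Z) = \psi(X) + i\psi(Y)$ with $\psi(X) = X - (q(X) - q(Y))e + \tilde{e}'$ and $\psi(Y) = Y - (X,Y)e$, and the coordinate expressions $q(Y) = \sum_i \epsilon_i y_i^2$, $(\lambda, Y) = 2\sum_i \epsilon_i \lambda_i y_i$, $(X,Y) = 2\sum_i \epsilon_i x_i y_i$, $q(X) = \sum_i \epsilon_i x_i^2$ and $q(\lambda) = \sum_i \epsilon_i \lambda_i^2 + \lambda_e \lambda_e'$, one gets
$$\sum_i \epsilon_i y_i(\lambda_i - \lambda_e' \overline{z}_i) = \tfrac12(\lambda, \psi(Y)) + i\lambda_e'\, q(Y),$$
$$\sum_i \epsilon_i(\lambda_i - \lambda_e' \overline{z}_i)(\lambda_i - \lambda_e' z_i) = q(\lambda) - \lambda_e'(\lambda, \psi(X)) + 2(\lambda_e')^2\, q(Y),$$
together with $\sum_i \epsilon_i y_i^2 = q(Y)$; these three identities are all that is needed.

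With the dictionary in hand, (i)--(iii) are direct substitutions. For (i), take $a_i = b_i = 2\epsilon_i(\lambda_i - \lambda_e'\overline{z}_i)$: the rank-one part $4y_iy_j$ of $h^{ij}(Y)$ produces $2\bigl(\sum_i \epsilon_i y_i(\lambda_i - \lambda_e'\overline{z}_i)\bigr)\bigl(\sum_j \epsilon_j y_j(\lambda_j - \lambda_e' z_j)\bigr) = 2(\lambda,\psi(Y))^2 + 8(\lambda_e')^2 q(Y)^2$, while the diagonal part $-2q(Y)\epsilon_i\delta_{ij}$ produces $-4q(Y)\sum_i \epsilon_i(\lambda_i - \lambda_e'\overline{z}_i)(\lambda_i - \lambda_e' z_i) = -4q(Y)q(\lambda) + 4q(Y)\lambda_e'(\lambda,\psi(X)) - 8(\lambda_e')^2 q(Y)^2$; adding these, the $(\lambda_e')^2 q(Y)^2$ terms cancel and the stated formula drops out. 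The same computation with $a_i = b_i = i\epsilon_i y_i$ gives $\overline{*}(\overline\partial q(Y) \wedge \overline{*}\overline\partial q(Y)) = q(Y)^2$, whence (ii); and with $a_i = 2\epsilon_i(\lambda_i - \lambda_e'\overline{z}_i)$, $b_i = i\epsilon_i y_i$ it gives $\overline{*}(\overline\partial(\lambda,\psi(\overline{Z})) \wedge \overline{*}\overline\partial q(Y)) = -iq(Y)(\lambda,\psi(Y)) + 2\lambda_e' q(Y)^2$, from which (iii) follows after multiplying by $(\lambda,\psi(Z))/q(Y)$ and taking $-2\Re$.

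Finally, (iv) is obtained by combining the previous three. Setting $\Theta = q(Y)^{-2}\bigl(q(Y)\,\overline\partial(\lambda,\psi(\overline{Z})) - (\lambda,\psi(\overline{Z}))\,\overline\partial q(Y)\bigr)$ and expanding the pairing $\overline{*}(\Theta\wedge\overline{*}\Theta)$ bilinearly, the three inner products that appear are exactly those computed in (i), (ii), (iii); collecting terms, the $(\lambda,\psi(Y))^2$ and $(\lambda,\psi(X))q(Y)\lambda_e'$ contributions cancel and one is left with
$$\overline{*}(\Theta\wedge\overline{*}\Theta) = \frac{|(\lambda,\psi(Z))|^2 - 4q(Y)q(\lambda)}{q(Y)^2}.$$
To finish I would invoke the geometric identity $|(\lambda,\psi(Z))|^2 = 4q(Y)\,q(\lambda_{Z^+})$: since $Z^+ = \langle \psi(X), \psi(Y)\rangle$ with $\psi(X)\perp\psi(Y)$ and $q(\psi(X)) = q(\psi(Y)) = q(Y)$ (a short computation using $\psi(X), \psi(Y)$ and $(K\otimes\IR)\perp e, e'$), orthogonal projection gives $q(\lambda_{Z^+}) = \tfrac{1}{4q(Y)}\bigl((\lambda,\psi(X))^2 + (\lambda,\psi(Y))^2\bigr)$, and together with $q(\lambda) = q(\lambda_{Z^+}) + q(\lambda_{Z^-})$ the numerator becomes $-4q(Y)\,q(\lambda_{Z^-})$, giving the claim. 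I expect the only real friction here to be the bookkeeping of the constants $\tfrac12$ and $4$ and the sign pattern $\epsilon_i$ during the substitutions, and, in (iv), recognizing the geometric identity for $q(\lambda_{Z^+})$; none of the individual steps is deep --- as the paper itself puts it, these are ``tedious calculations''.
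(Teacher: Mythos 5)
Your proposal is correct and follows essentially the same route as the paper: establish $\overline{*}(\alpha\wedge\overline{*}\beta)=\tfrac12\sum_{i,j}a_i\overline{b_j}h^{ij}(Y)$, evaluate (i)--(iii) by coordinate substitution using the same identities in terms of $(\lambda,\psi(X))$, $(\lambda,\psi(Y))$, $q(\lambda)$, $q(Y)$, and obtain (iv) as the sum of (i)--(iii) divided by $q(Y)^2$. The only (welcome) addition is that you spell out the final identity $\lvert(\lambda,\psi(Z))\rvert^2=4q(Y)\,q(\lambda_{Z^+})$, which the paper uses implicitly at the last step.
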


\begin{proof}
	Recall that $h^{ij}(Y) = 4 y_i y_j - 2 q(Y) \delta_{ij} \epsilon_i$, where $\delta_{ii} = 1$ and $\delta_{ij} = 0$ otherwise. Moreover, using \eqref{eq:dzWedgeHatdz} and \eqref{eq:Stardz} yields
	$$\overline{*} \left( d \overline{z}_i \wedge \overline{*} d \overline{z}_j \right) = \frac{1}{2} h^{ij}(Y).$$
	Therefore, we have
	\begin{align}
		\overline{*} \left(\left(\sum_{i = 1}^n f_i(Z) d \overline{z}_i\right) \wedge \overline{*} \left(\sum_{j = 1}^n g_j(Z) d \overline{z}_j\right) \right) = \frac{1}{2}\sum_{i, j = 1}^n f_i(Z) \overline{g_j(Z)} h^{ij}(Y). \label{eq:StarWedgeProdut}
	\end{align}
	and we will use this throughout the proof.
	
	We start with (i). Using \eqref{eq:StarWedgeProdut}, the left hand side is equal to
		\begin{align}
			&2 \sum_{i,j = 1}^n \epsilon_i \epsilon_j (\lambda_i - \overline{z}_i \lambda_e') (\lambda_j - z_j \lambda_e') h^{ij}(Y) \nonumber\\
			&= 8 \sum_{i,j = 1}^n \epsilon_i \epsilon_j (\lambda_i - \overline{z}_i \lambda_e') (\lambda_j - z_j \lambda_e') y_i y_j - 4 q(Y) \sum_{j = 1}^n \epsilon_j (\lambda_j - \overline{z}_j \lambda_e') (\lambda_j - z_j \lambda_e'). \label{eq:WeirdEq}
		\end{align}
		We use that
		$$\sum_{j = 1}^n \epsilon_j \lambda_j^2 = q(\lambda_W),$$
		where $\lambda_W$ denotes the projection of $\lambda$ to $W(\IR)$. Similarly, we have
		$$2 \sum_{j = 1}^n \epsilon_j \lambda_j z_j = (\lambda, Z) \qquad \text{and} \qquad 2 \sum_{j = 1}^n \epsilon_j \overline{z}_j z_j = (Z, \overline{Z}),$$
		so that \eqref{eq:WeirdEq} is equal to
		$$8 \left\lvert \sum_{i = 1}^n \epsilon_i (\lambda_i - z_i \lambda_e') y_i \right\rvert^2 - 2 q(Y) \Big(2q(\lambda_W) - (\lambda, Z) \lambda_e' - (\lambda, \overline{Z}) \lambda_e' + (Z, \overline{Z}) \lambda_e'^2\Big).$$
		Using
		$$2\sum_{i = 1}^n \epsilon_i (\lambda_i - z_i \lambda_e') y_i = (\lambda, Y) - (Z, Y) \lambda_e'$$
		and
		$$q(\lambda) = \lambda_e \lambda_e' + q(\lambda_W), \qquad (Z, \overline{Z}) = 2 q(X) + 2 q(Y), \qquad (\lambda, Z) + (\lambda, \overline{Z}) = 2(\lambda, X)$$
		we obtain
		\begin{align}
			2 \left\lvert (\lambda, Y) - (Z, Y) \lambda_e' \right\rvert^2 - 2 q(Y) \Big(2q(\lambda) - 2\lambda_e \lambda_e' - 2(\lambda, X) \lambda_e' + 2 q(X) \lambda_e'^2 + 2q(Y) \lambda_e'^2\Big). \label{eq:WeirdEq2}
		\end{align}
		Now,
		$$(\lambda, \psi(X)) = (\lambda, X) + \lambda_e + (q(Y) - q(X)) \lambda_e'$$
		and
		$$(\lambda, Y) - (Z, Y) \lambda_e' = (\lambda, \psi(Y)) - 2 i q(Y) \lambda_e'$$
		shows that \eqref{eq:WeirdEq2} is equal to
		\begin{align*}
			& 2 (\lambda, \psi(Y))^2 + 8 q(Y)^2 \lambda_e'^2 - 2 q(Y) \Big(2q(\lambda) - 2(\lambda, \psi(X)) \lambda_e' + 4 q(Y) \lambda_e'^2\Big) \\
			&= 2 (\lambda, \psi(Y))^2 - 4 q(Y) q(\lambda) + 4(\lambda, \psi(X)) q(Y) \lambda_e'.
		\end{align*}
		
		For (ii), we have
		\begin{align}
			\overline{*}\ \frac{\lvert (\lambda, \psi(Z)) \rvert^2 \overline{\partial} q(Y) \wedge \overline{*} \overline{\partial} q(Y)}{q(Y)^2} = \frac{\lvert (\lambda, \psi(Z)) \rvert^2}{2 q(Y)^2} \sum_{i,j = 1}^n \epsilon_i \epsilon_j y_i y_j h^{ij}(Y) \label{eq:iiEquation}
		\end{align}
		by \eqref{eq:StarWedgeProdut}. Now, a short calculation shows that \eqref{eq:iiEquation} is equal to
		\begin{align*}
			2\frac{\lvert (\lambda, \psi(Z)) \rvert^2}{q(Y)^2} \sum_{i,j = 1}^n \epsilon_i \epsilon_j y_i^2 y_j^2 - \frac{\lvert (\lambda, \psi(Z)) \rvert^2}{q(Y)} \sum_{j = 1}^n \epsilon_j y_j^2 = \lvert (\lambda, \psi(Z)) \rvert^2,
		\end{align*}
		where we used $q(Y) = \sum_{j = 1}^n \epsilon_j y_j^2$.
		
		For (iii) we again use \eqref{eq:StarWedgeProdut} to obtain
		\begin{align}
			&\overline{*}\ \frac{(\lambda, \psi(Z)) \overline{\partial} (\lambda, \psi(\overline{Z})) \wedge \overline{*} \overline{\partial} q(Y)}{q(Y)} \nonumber \\
			&= -i\frac{(\lambda, \psi(Z))}{q(Y)} \sum_{i,j} \epsilon_i \epsilon_j y_j (\lambda_i - \overline{z}_i \lambda_e') h^{ij}(Y) \nonumber \\
			&= -4i \frac{(\lambda, \psi(Z))}{q(Y)} \sum_{i,j} \epsilon_i \epsilon_j y_i y_j^2 (\lambda_i - \overline{z}_i \lambda_e') + 2i (\lambda, \psi(Z)) \sum_{j = 1}^n \epsilon_j y_j (\lambda_i - \overline{z}_i \lambda_e'). \label{eq:iiiEqu}
		\end{align}
		Using $q(Y) = \sum_{j = 1}^n \epsilon_j y_j^2$, we see that \eqref{eq:iiiEqu} is equal to
		\begin{align*}
			-2i (\lambda, \psi(Z)) \sum_{i = 1}^n \epsilon_i y_i (\lambda_i - \overline{z}_i \lambda_e')
			&= -i(\lambda, \psi(Z)) \Big((\lambda, Y) - (Y, \overline{Z}) \lambda_e'\Big) \\
			&= -i(\lambda, \psi(Z)) (\lambda, \psi(Y)) + 2 (\lambda, \psi(Z)) q(Y) \lambda_e',
		\end{align*}
		where we used
		$$2\sum_{j = 1}^n \epsilon_j y_j \lambda_j = (\lambda, Y) = (\lambda, \psi(Y)) + (X, Y) \lambda_e'$$
		and
		$$2 \sum_{j = 1}^n \epsilon_j y_j \overline{z}_j = (Y, \overline{Z}) = (Y, X) - 2 i q(Y).$$
		Now,
		\begin{align*}
			-2\Re\bigg(-i(\lambda, \psi(Z)) (\lambda, \psi(Y)) + 2 (\lambda, \psi(Z)) q(Y) \lambda_e'\bigg) = 2(\lambda, \psi(Y))^2 - 4 (\lambda, \psi(X)) q(Y) \lambda_e',
		\end{align*}
		which shows (iii).
		
		Now, the left hand side of (iv) is equal to
		\begin{align*}
			& \overline{*} \left(\frac{q(Y) \overline{\partial} (\lambda, \psi(\overline{Z})) - (\lambda, \psi(\overline{Z})) \overline{\partial} q(Y)}{q(Y)^2} \wedge \overline{*}\ \frac{q(Y) \overline{\partial} (\lambda, \psi(\overline{Z})) - (\lambda, \psi(\overline{Z})) \overline{\partial} q(Y)}{q(Y)^2}\right) \\
			&= q(Y)^{-2}\ \overline{*}\ \Bigg( \overline{\partial} (\lambda, \psi(\overline{Z})) \wedge \overline{*} (\lambda, \psi(\overline{Z})) + \frac{\lvert (\lambda, \psi(Z)) \rvert^2 \overline{\partial} q(Y) \wedge \overline{*} \overline{\partial} q(Y)}{q(Y)^2} \\
			&\qquad\qquad\qquad\qquad\qquad\qquad- 2 \Re\left(\overline{*}\ \frac{(\lambda, \psi(Z)) \overline{\partial} (\lambda, \psi(\overline{Z})) \wedge \overline{*} \overline{\partial} q(Y)}{q(Y)}\right) \Bigg).
		\end{align*}
		This is the sum of (i), (ii) and (iii) divided by $q(Y)^2$, hence, equal to
		\begin{align*}
			&q(Y)^{-2} \Bigg(2 (\lambda, \psi(Y))^2 - 4 q(Y) q(\lambda) + 4(\lambda, \psi(X)) q(Y) \lambda_e' + \lvert (\lambda, \psi(Z)) \rvert^2 \\
			&\qquad\qquad\qquad\qquad\qquad\qquad\qquad\ \,- 2(\lambda, \psi(Y))^2 - 4 (\lambda, \psi(X)) q(Y) \lambda_e' \Bigg) \\
			&= \frac{\lvert (\lambda, \psi(Z)) \rvert^2}{q(Y)^2} - \frac{4 q(\lambda)}{q(Y)} = -4 \frac{q(\lambda_{Z^-})}{q(Y)}.
		\end{align*}
		This proves the lemma.
\end{proof}

Let $p_Z(\lambda) = \xi_1 \frac{(\lambda, \psi(\overline{Z}))}{q(Y)}$. Then from the previous lemma we obtain

\begin{lem}\thlabel{lem:pWedgep}
	We have
$$\overline{*}_{-1} \left(\overline{\partial} \frac{(\lambda, \psi(\overline{Z}))}{q(Y)} \wedge p_Z(\lambda)\right) = -4 \frac{q(\lambda_{Z^-})}{q(Y)}.$$
\end{lem}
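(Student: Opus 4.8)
The plan is to unwind the definition of the $\xi_1$-operator and then invoke part (iv) of the preceding lemma, which already contains the only nontrivial computation. Put $g(Z) := \frac{(\lambda,\psi(\overline{Z}))}{q(Y)}$, so that by definition $p_Z(\lambda) = \xi_1 g = \overline{*}_1\overline{\partial}g$. Applying $\overline{\partial}$ to the quotient,
$$\overline{\partial}g = \frac{q(Y)\,\overline{\partial}(\lambda,\psi(\overline{Z})) - (\lambda,\psi(\overline{Z}))\,\overline{\partial}q(Y)}{q(Y)^2},$$
which is precisely the $(0,1)$-form treated in \thref{lem:pwedgep}(iv).

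Next I would record how the twisted Hodge operators relate to the plain one: from $\overline{*}_\kappa(\phi\otimes F) = \overline{*}\phi\otimes\overline{F}\,q(Y)^\kappa$ together with the conjugate-linearity of $\overline{*}$, one has, as operators on forms on $\IH_n$, simply $\overline{*}_\kappa = q(Y)^\kappa\,\overline{*}$. Using $\overline{*}_1 = q(Y)\,\overline{*}$ and then $\overline{*}_{-1} = q(Y)^{-1}\,\overline{*}$, and pulling the real scalar $q(Y)$ through, I obtain
$$\overline{*}_{-1}\bigl(\overline{\partial}g \wedge p_Z(\lambda)\bigr) = \overline{*}_{-1}\bigl(q(Y)\bigl(\overline{\partial}g \wedge \overline{*}\,\overline{\partial}g\bigr)\bigr) = q(Y)^{-1}\cdot q(Y)\,\overline{*}\bigl(\overline{\partial}g \wedge \overline{*}\,\overline{\partial}g\bigr) = \overline{*}\bigl(\overline{\partial}g \wedge \overline{*}\,\overline{\partial}g\bigr),$$
so the two powers of $q(Y)$ coming from $\overline{*}_1$ and $\overline{*}_{-1}$ cancel. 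By \thref{lem:pwedgep}(iv) the last expression equals $-4\,q(\lambda_{Z^-})/q(Y)$, which is the assertion.

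There is no genuine obstacle here: the substance is entirely \thref{lem:pwedgep}(iv), and the only point needing a moment's care is the bookkeeping of the factors $q(Y)^{\pm 1}$. (Equivalently, one can argue without ever unweighting: $\overline{\partial}g \wedge p_Z(\lambda) = \overline{\partial}g\wedge\overline{*}_1\overline{\partial}g$ equals $q(Y)$ times the ordinary pointwise squared norm of $\overline{\partial}g$, times $d\mu(Z)$; applying $\overline{*}_{-1}$ divides by $q(Y)$ and returns that ordinary squared norm, namely $\overline{*}(\overline{\partial}g\wedge\overline{*}\,\overline{\partial}g)$, which is evaluated in \thref{lem:pwedgep}(iv).)
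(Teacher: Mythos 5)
Your proposal is correct and follows essentially the same route as the paper: unwinding $\overline{*}_{\pm 1}=q(Y)^{\pm 1}\,\overline{*}$ and $p_Z(\lambda)=q(Y)\,\overline{*}\,\overline{\partial}\frac{(\lambda,\psi(\overline{Z}))}{q(Y)}$ so the weight factors cancel, applying the quotient rule, and reducing everything to \thref{lem:pwedgep}(iv).
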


\begin{proof}
	Since $\overline{*}_\kappa = q(Y)^\kappa \overline{*},\ \xi_1 = q(Y) \overline{*} \overline{\partial}$ and $p_Z(\lambda) = q(Y) \overline{*} \overline{\partial} \frac{(\lambda, \psi(\overline{Z}))}{q(Y)}$, we have
	$$\overline{*}_{-1} \left(\overline{\partial} \frac{(\lambda, \psi(\overline{Z}))}{q(Y)} \wedge p_Z(\lambda)\right) = \overline{*} \left(\overline{\partial} \frac{(\lambda, \psi(\overline{Z}))}{q(Y)} \wedge \overline{*} \overline{\partial} \frac{(\lambda, \psi(\overline{Z}))}{q(Y)}\right).$$
	The quotient rule yields
	\begin{align*}
		\overline{\partial} \frac{(\lambda, \psi(\overline{Z}))}{q(Y)}
		&= \frac{q(Y) \overline{\partial} (\lambda, \psi(\overline{Z})) - (\lambda, \psi(\overline{Z})) \overline{\partial} q(Y)}{q(Y)^2}
	\end{align*}
	so that
	\begin{align*}
		&\overline{*} \left(\overline{\partial} \frac{(\lambda, \psi(\overline{Z}))}{q(Y)} \wedge \overline{*} \overline{\partial} \frac{(\lambda, \psi(\overline{Z}))}{q(Y)}\right) \\
		&= \overline{*} \left(\frac{q(Y) \overline{\partial} (\lambda, \psi(\overline{Z})) - (\lambda, \psi(\overline{Z})) \overline{\partial} q(Y)}{q(Y)^2} \wedge \overline{*}\ \frac{q(Y) \overline{\partial} (\lambda, \psi(\overline{Z})) - (\lambda, \psi(\overline{Z})) \overline{\partial} q(Y)}{q(Y)^2}\right).
	\end{align*}
	By \thref{lem:pwedgep}(iv) we obtain the result.
\end{proof}

The following lemma follows from \cite[Equation (2)]{Zemel} and the fact that the Laplace operator in \cite{Zemel} is $4 \Delta_\kappa$, see \cite[Remark 5.3]{Kiefer}.
\begin{lem}\thlabel{lem:LaplaceEigenfunction}
The function $\frac{(\lambda, \psi(\overline{Z}))}{q(Y)}$ is an eigenfunction with eigenvalue $\frac{n}{2}$ of the Laplace operator $\Delta_1$, i.e. $\Delta_1 \frac{(\lambda, \psi(\overline{Z}))}{q(Y)} = \frac{n}{2} \frac{(\lambda, \psi(\overline{Z}))}{q(Y)}$. In particular, we have $\xi_{-1} p_Z(\lambda) = \frac{n}{2} \frac{(\lambda, \psi(\overline{Z}))}{q(Y)}$.
\end{lem}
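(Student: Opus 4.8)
The plan is to first reduce the ``in particular'' clause to the eigenvalue statement, and then to establish the eigenvalue. For the reduction I would note that $p_Z(\lambda) = \xi_1\tfrac{(\lambda,\psi(\overline{Z}))}{q(Y)}$ by definition, and that on a $(0,0)$-form valued in $\calL_\kappa$ the second summand $\overline{\partial}\,\overline{*}_{-\kappa}\,\overline{\partial}\,\overline{*}_\kappa$ of $\Delta_\kappa$ vanishes, since $\overline{*}_\kappa$ sends it to an $(n,n)$-form which $\overline{\partial}$ annihilates; hence $\Delta_\kappa = \xi_{-\kappa}\xi_\kappa$ on functions, so $\xi_{-1}p_Z(\lambda) = \Delta_1\tfrac{(\lambda,\psi(\overline{Z}))}{q(Y)}$, and everything comes down to showing $\Delta_1 f_\lambda = \tfrac n2 f_\lambda$ with $f_\lambda := \tfrac{(\lambda,\psi(\overline{Z}))}{q(Y)}$.

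For the eigenvalue, the quickest route is the cited one: $f_\lambda$ is, up to the normalization of the Laplacian, exactly the function whose image under the Laplacian is computed in \cite[Equation (2)]{Zemel}, and since that operator equals $4\Delta_\kappa$ by \cite[Remark 5.3]{Kiefer} while the eigenvalue there is $2n$, one obtains $\tfrac n2$ for $\Delta_1$. For a self-contained argument I would instead use that $\lambda \mapsto f_\lambda$ is $\IC$-linear (and injective, since the $\psi(\overline{Z})$ span $V(\IC)$) and that $\Delta_1$ is $\IC$-linear, so it suffices to verify the eigenvalue equation for $\lambda$ running over the basis $e, \tilde{e}', b_1, \dots, b_n$ of $V(\IC)$, equivalently for the three functions $f_e = q(Y)^{-1}$, $f_{\tilde{e}'} = -q(\overline{Z})/q(Y)$ and $f_{b_j} = 2\epsilon_j\overline{z}_j/q(Y)$. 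Moreover $\Delta_1$ is $O^+(V(\IR))$-equivariant for the weight-$1$ slash (being built from $\overline{\partial}$ and the invariant Hodge-$\overline{*}$) and $f_\lambda\vert_1\gamma = f_{\gamma^{-1}\lambda}$, so once the equation holds at one vector it propagates over its $O^+(V(\IR))$-orbit; since the orbit of the isotropic vector $e$ consists of all nonzero isotropic vectors and these span $V(\IC)$, the single computation $\Delta_1 q(Y)^{-1} = \tfrac n2 q(Y)^{-1}$ would in fact suffice.

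That computation is a direct unwinding of $\Delta_1 = q(Y)^{-1}\,\overline{*}\,\overline{\partial}\,(q(Y)\,\overline{*}\,\overline{\partial}\,(\cdot))$ using the formulas assembled in the proof of \thref{lem:pwedgep}: $\overline{\partial}q(Y) = i\sum_j\epsilon_j y_j\,d\overline{z}_j$, $h^{ij}(Y) = 4y_iy_j - 2q(Y)\delta_{ij}\epsilon_i$, $\overline{*}\,(d\overline{z}_i\wedge\overline{*}\,d\overline{z}_j) = \tfrac12 h^{ij}(Y)$, together with \eqref{eq:Stardz}, \eqref{eq:dzWedgeHatdz} and $q(Y) = \sum_j\epsilon_j y_j^2$. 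One first finds $\xi_1 q(Y)^{-1} = -\tfrac{i}{(4iq(Y))^n}\sum_i y_i\,\widehat{d\overline{z}_i}$; applying $\overline{\partial}$ and collapsing via $\sum_i\epsilon_i y_i^2 = q(Y)$ yields $\tfrac n2\,d\mu(Z)$; and applying $q(Y)^{-1}\overline{*}$ returns $\tfrac n2\,q(Y)^{-1}$. The main, and essentially only, obstacle is this second Hodge-$\overline{*}$ step, namely differentiating and star-ing the $(n,n-1)$-form $\xi_1 f_\lambda$ — the choice $\lambda = e$ keeps it minimal, since then only the diagonal $\widehat{d\overline{z}_i}$-terms survive, and the Zemel citation bypasses it altogether.
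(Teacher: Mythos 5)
Your proposal is correct, but it goes a different (and more self-contained) way than the paper: the paper's entire proof is the citation you mention --- \cite[Equation (2)]{Zemel} together with the normalization $4\Delta_\kappa$ from \cite[Remark 5.3]{Kiefer} --- with no further argument, whereas you supplement this with a direct verification. Your reduction of the ``in particular'' clause is a genuine (small) addition: the paper only records $\xi_{-\kappa}\xi_\kappa = \Delta_\kappa$ on $(n,n-1)$-forms, and your observation that the second summand of $\Delta_\kappa$ dies on $(0,0)$-forms because $\overline{*}_\kappa$ produces an $(n,n)$-form is exactly what is needed. Your alternative proof of the eigenvalue equation is also sound: linearity of $\lambda \mapsto f_\lambda$, equivariance $f_\lambda\vert_1\gamma = f_{\gamma^{-1}\lambda}$ together with the invariance of $\Delta_1$ under the weight-$1$ slash (which the paper uses implicitly throughout, since the operators descend to $\calL_{\pm\kappa}$ on the quotient), and the fact that the $O^+(V(\IR))$-orbit of $e$ consists of all nonzero isotropic vectors (Witt, plus the remark that the reflection in a positive-norm vector of $e^\perp$ fixes $e$ but lies outside $O^+$) and that these span $V$, do reduce everything to $\Delta_1 q(Y)^{-1} = \tfrac n2 q(Y)^{-1}$. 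I checked that computation: with the antilinear convention for $\overline{*}$ (which is what makes your intermediate formula $\xi_1 q(Y)^{-1} = -\tfrac{i}{(4iq(Y))^n}\sum_i y_i\,\widehat{d\overline{z}_i}$ come out with the right sign --- worth stating explicitly, as it is the easiest place to slip), one gets $\overline{\partial}\,\xi_1 q(Y)^{-1} = \tfrac n2\,d\mu(Z)$ via \eqref{eq:dzWedgeHatdz} and $\sum_i\epsilon_i y_i^2 = q(Y)$, and hence the claim. In short: the paper's route buys brevity by outsourcing the computation to Zemel; yours buys independence from that reference at the cost of invoking the (standard but unstated) equivariance of $\overline{*}_\kappa$ and the transitivity statement, and it has the bonus of verifying the constant $\tfrac n2$ within the paper's own normalizations.
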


As a corollary we obtain
\begin{cor}\thlabel{cor:pIsHol}
	We have $\xi_{-1} \frac{p_Z(\lambda)}{q(\lambda_{Z^-})^{\frac{n}{2}}} = 0$.
\end{cor}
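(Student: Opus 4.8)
The plan is to combine the two preceding results, \thref{lem:pWedgep} and \thref{lem:LaplaceEigenfunction}, with the general fact that $\xi_{-1}$ is a conjugate-linear first-order operator that only sees the anti-holomorphic dependence of its argument. Writing $g(Z) = q(\lambda_{Z^-})^{-n/2}$, I want to show that $g$ is, up to a scalar, the ``correct'' weight factor making the product $g \cdot p_Z(\lambda)$ lie in the kernel of $\xi_{-1}$.

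First I would recall that $p_Z(\lambda) \in \calA^{n,n-1}(\IH_n)$ (more precisely, a section of $\calL_{-1}$), so $p_Z(\lambda)/q(\lambda_{Z^-})^{n/2}$ is again an $(n,n-1)$-form and $\xi_{-1}$ of it is an $n$-form, i.e. a section of $\calL_1$ times the volume form; concretely $\xi_{-1} H = \overline{*}_{-1}\overline{\partial} H$ on such forms. Next, since $q(\lambda_{Z^-})$ is a real-valued function, and $\xi_{-1}$ distributes over a product of a function with a form via the $\overline{\partial}$-Leibniz rule — schematically $\xi_{-1}(f H) = \overline{*}_{-1}(\overline{\partial}f \wedge H + f\, \overline{\partial}H)$ — applying this with $f = q(\lambda_{Z^-})^{-n/2}$ gives
$$\xi_{-1}\!\left(\frac{p_Z(\lambda)}{q(\lambda_{Z^-})^{n/2}}\right) = \overline{*}_{-1}\!\left(\overline{\partial}\bigl(q(\lambda_{Z^-})^{-n/2}\bigr)\wedge p_Z(\lambda)\right) + q(\lambda_{Z^-})^{-n/2}\,\xi_{-1}p_Z(\lambda).$$
Here $\overline{\partial}\bigl(q(\lambda_{Z^-})^{-n/2}\bigr) = -\tfrac{n}{2}\,q(\lambda_{Z^-})^{-n/2 - 1}\,\overline{\partial}\,q(\lambda_{Z^-})$, and I would identify $\overline{\partial}\,q(\lambda_{Z^-})$ in terms of the forms appearing earlier. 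The key computational input is that $q(\lambda_{Z^-})$ agrees (up to the now-familiar algebra) with $\lvert(\lambda,\psi(Z))\rvert^2/(4q(Y)) - q(\lambda) $, or more usefully that its $\overline{\partial}$ pairs against $p_Z(\lambda)$ exactly as in \thref{lem:pWedgep}: indeed $\overline{\partial}\,q(\lambda_{Z^-})$ is a scalar multiple of $q(Y)\,\overline{\partial}\tfrac{(\lambda,\psi(\overline{Z}))}{q(Y)}$ plus an anti-holomorphic multiple of $q(Y)$ that is killed under $\overline{*}_{-1}(-\wedge p_Z(\lambda))$, so that $\overline{*}_{-1}(\overline{\partial}\,q(\lambda_{Z^-})\wedge p_Z(\lambda))$ reduces to the expression computed in \thref{lem:pWedgep}, namely $-4\,q(\lambda_{Z^-})/q(Y)$ up to the scaling by $(\lambda,\psi(\overline Z))$ versus its normalization.

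Putting the pieces together: \thref{lem:pWedgep} gives the first term as $-\tfrac{n}{2}q(\lambda_{Z^-})^{-n/2-1}\cdot\bigl(-4\,q(\lambda_{Z^-})/q(Y)\bigr)\cdot(\text{const}) = \tfrac{n}{2}\cdot\tfrac{1}{q(Y)}\cdot q(\lambda_{Z^-})^{-n/2}\cdot(\text{const})$-type contribution, while \thref{lem:LaplaceEigenfunction} gives the second term as $q(\lambda_{Z^-})^{-n/2}\cdot\tfrac{n}{2}\cdot\tfrac{(\lambda,\psi(\overline Z))}{q(Y)}$; I would check that these two contributions are negatives of each other once the normalizing factor relating $p_Z(\lambda)$ to $\overline{\partial}\tfrac{(\lambda,\psi(\overline Z))}{q(Y)}$ and the factor $(\lambda,\psi(\overline Z))$ inside $q(\lambda_{Z^-})$ are tracked carefully, so the sum vanishes. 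The main obstacle will be bookkeeping of constants and, in particular, correctly handling the $(\lambda,\psi(\overline Z))$-factor: \thref{lem:pWedgep} is stated for $\overline{\partial}\tfrac{(\lambda,\psi(\overline Z))}{q(Y)}\wedge p_Z(\lambda)$, whereas here one differentiates $q(\lambda_{Z^-})$ which is essentially $\lvert(\lambda,\psi(Z))\rvert^2/q(Y)$ up to an additive constant, so one must verify that the holomorphic factor $(\lambda,\psi(\overline Z))$ that appears when applying $\overline\partial$ combines correctly with $p_Z(\lambda)$ — this is exactly the cancellation that \thref{lem:LaplaceEigenfunction} is designed to produce, and no genuinely new estimate is needed beyond the algebra already carried out in \thref{lem:pwedgep}.
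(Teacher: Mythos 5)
Your proposal follows the paper's proof essentially verbatim: apply the $\overline{\partial}$-Leibniz (quotient) rule, use $\overline{\partial}\, q(\lambda_{Z^-}) = -\tfrac{(\lambda,\psi(Z))}{4}\,\overline{\partial}\tfrac{(\lambda,\psi(\overline{Z}))}{q(Y)}$ to reduce the first term to \thref{lem:pWedgep}, and cancel it against the term supplied by \thref{lem:LaplaceEigenfunction}. The only blemishes are cosmetic: the exact relation is $q(\lambda_{Z^-}) = q(\lambda) - \lvert(\lambda,\psi(Z))\rvert^2/(4q(Y))$ (your sign is flipped), the factor pulled out by $\overline{\partial}$ is the holomorphic $(\lambda,\psi(Z))$ (conjugated to $(\lambda,\psi(\overline{Z}))$ only when passed through the anti-linear $\overline{*}_{-1}$), and there is no extra ``killed'' term --- the identity is exact --- but the cancellation works out precisely as you predict.
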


\begin{proof}
The quotient rule yields
\begin{align*}
	\xi_{-1} \frac{p_Z(\lambda)}{q(\lambda_{Z^-})^{\frac{n}{2}}}
	&= \frac{q(\lambda_{Z^-})^{\frac{n}{2}} \xi_{-1} p_Z(\lambda) - \frac{n}{2} q(\lambda_{Z^-})^{\frac{n}{2} - 1} \overline{*}_{-1} \left(\overline{\partial} q(\lambda_{Z^-}) \wedge p_Z(\lambda)\right)}{q(\lambda_{Z^-})^n}.
\end{align*}
Using $\overline{\partial} q(\lambda_{Z^-}) = -\frac{(\lambda, \psi(Z))}{4} \overline{\partial} \frac{(\lambda, \psi(\overline{Z}))}{q(Y)}$ together with \thref{lem:pWedgep} and \thref{lem:LaplaceEigenfunction}, we obtain that this expression equals
\begin{align*}
	\frac{\frac{n}{2}q(\lambda_{Z^-})^{\frac{n}{2}} \frac{(\lambda, \psi(\overline{Z}))}{q(Y)} - \frac{n}{2} q(\lambda_{Z^-})^{\frac{n}{2}}  \frac{(\lambda, \psi(\overline{Z}))}{q(Y)}}{q(\lambda_{Z^-})^n} = 0.
\end{align*}
\end{proof}

Let $F(a, b; c; z)$ be the principal branch of the Gauss hypergeometric function, see for example \cite[Chapter 15]{NIST}.

\begin{lem}\thlabel{lem:DifferentialEq}
	We have
	\begin{align*}
		&\overline{\partial} \left(q(\lambda_{Z^+})^{\frac{n}{2} - \kappa} F\left(1 - \frac{n}{2}, \kappa - \frac{n}{2}; \kappa - \frac{n}{2} + 1; \frac{q(\lambda)}{q(\lambda_{Z^+})}\right)\right) \\
		&= \frac{(-1)^{\frac{n}{2}}}{4} (\lambda, \psi(Z)) \left(\kappa - \frac{n}{2}\right) q(\lambda_{Z^+})^{-\kappa} q(\lambda_{Z^-})^{\frac{n}{2} - 1} \overline{\partial} \frac{(\lambda, \psi(\overline{Z}))}{q(Y)}
	\end{align*}
\end{lem}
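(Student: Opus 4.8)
The plan is to verify the identity by a direct computation: differentiate the left-hand side using the chain rule and the defining differential equation of the Gauss hypergeometric function, and then match terms with the right-hand side. Write $u = \frac{q(\lambda)}{q(\lambda_{Z^+})}$, so that $q(\lambda_{Z^-}) = q(\lambda) - q(\lambda_{Z^+}) = q(\lambda_{Z^+})(u - 1)$, and abbreviate $F = F\!\left(1 - \frac{n}{2}, \kappa - \frac{n}{2}; \kappa - \frac{n}{2} + 1; u\right)$. First I would compute $\overline{\partial} q(\lambda_{Z^+})$ and $\overline{\partial} u$ in terms of $\overline{\partial}\frac{(\lambda, \psi(\overline{Z}))}{q(Y)}$. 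The relation $\overline{\partial} q(\lambda_{Z^-}) = -\frac{(\lambda, \psi(Z))}{4}\overline{\partial}\frac{(\lambda, \psi(\overline{Z}))}{q(Y)}$ is already used in the proof of \thref{cor:pIsHol}; together with $q(\lambda_{Z^+}) = q(\lambda) - q(\lambda_{Z^-})$ this immediately gives $\overline{\partial} q(\lambda_{Z^+}) = \frac{(\lambda, \psi(Z))}{4}\overline{\partial}\frac{(\lambda, \psi(\overline{Z}))}{q(Y)}$, and then $\overline{\partial} u = -q(\lambda) q(\lambda_{Z^+})^{-2}\,\overline{\partial} q(\lambda_{Z^+})$.

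Next I would apply the product rule to the left-hand side:
\begin{align*}
\overline{\partial}\!\left(q(\lambda_{Z^+})^{\frac{n}{2}-\kappa} F\right)
&= \left(\tfrac{n}{2}-\kappa\right) q(\lambda_{Z^+})^{\frac{n}{2}-\kappa-1} F\,\overline{\partial} q(\lambda_{Z^+})
+ q(\lambda_{Z^+})^{\frac{n}{2}-\kappa} F'\,\overline{\partial} u,
\end{align*}
where $F' = \frac{d}{du}F(1-\frac{n}{2}, \kappa-\frac{n}{2}; \kappa-\frac{n}{2}+1; u)$. Substituting the expression for $\overline{\partial} u$, the whole thing becomes a scalar multiple of $q(\lambda_{Z^+})^{\frac{n}{2}-\kappa-1}\,\overline{\partial} q(\lambda_{Z^+})$, namely
\begin{align*}
\left[\left(\tfrac{n}{2}-\kappa\right) F - u\,F'\right] q(\lambda_{Z^+})^{\frac{n}{2}-\kappa-1}\,\overline{\partial} q(\lambda_{Z^+}).
\end{align*}
So the claim reduces to an identity purely in the one-variable function: I must show $\left(\tfrac{n}{2}-\kappa\right)F - u F' = c\,(u-1)^{\frac{n}{2}-1}$ for the appropriate constant, since $q(\lambda_{Z^-})^{\frac{n}{2}-1} = q(\lambda_{Z^+})^{\frac{n}{2}-1}(u-1)^{\frac{n}{2}-1}$ absorbs the remaining power of $q(\lambda_{Z^+})$ and the factor $\frac{(\lambda,\psi(Z))}{4}$ from $\overline{\partial} q(\lambda_{Z^+})$ matches the prefactor on the right. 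Tracking the sign $(-1)^{\frac{n}{2}}$ and the factor $\left(\kappa - \frac{n}{2}\right)$ then pins down $c$.

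The remaining one-variable identity is a standard fact about $F(a,b;b+1;u)$ with $a = 1-\frac{n}{2}$, $b = \kappa - \frac{n}{2}$: one has the contiguous/derivative relation $\frac{d}{du}\!\left(u^{b} F(a,b;b+1;u)\right) = b\,u^{b-1}F(a,b-1;b;u)$ together with $F(a,b-1;b;u)$ reducing — because the first parameter differs from the third by $1$ — via the Euler-type evaluation to $(1-u)^{-a}$ up to constants; alternatively one checks directly from the hypergeometric ODE that $(\tfrac{n}{2}-\kappa)F - uF'$ is proportional to $(1-u)^{-a} = (1-u)^{\frac{n}{2}-1}$, noting $(u-1)^{\frac{n}{2}-1} = (-1)^{\frac{n}{2}-1}(1-u)^{\frac{n}{2}-1}$, which produces the sign $(-1)^{\frac{n}{2}}$ after combining with the overall minus in $\overline{\partial} q(\lambda_{Z^+})$ versus $\overline{\partial} q(\lambda_{Z^-})$. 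I expect the main obstacle to be bookkeeping: correctly propagating the several sign conventions ($\epsilon_j$, the $(-1)^{n/2}$, the sign in $\overline{\partial} q(\lambda_{Z^\pm})$) and the constant $\kappa - \frac{n}{2}$ through to the final normalization, rather than any conceptual difficulty — the hypergeometric input is elementary and the geometric derivatives are already available from the preceding lemmas.
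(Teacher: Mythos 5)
Your proposal is correct and follows essentially the same route as the paper: the paper likewise reduces the claim to $\overline{\partial}q(\lambda_{Z^+}) = \frac{(\lambda,\psi(Z))}{4}\,\overline{\partial}\frac{(\lambda,\psi(\overline{Z}))}{q(Y)}$ (via $q(\lambda_{Z^+}) = \frac{(\lambda,\psi(Z))(\lambda,\psi(\overline{Z}))}{4q(Y)}$) together with the derivative formula for $z^{b}F(a,b;b+1;z)$, which is exactly your one-variable identity $\left(\kappa-\frac{n}{2}\right)F + uF' = \left(\kappa-\frac{n}{2}\right)(1-u)^{\frac{n}{2}-1}$. One small slip: the quoted relation should be $\frac{d}{du}\left(u^{b}F(a,b;b+1;u)\right) = b\,u^{b-1}F(a,b;b;u) = b\,u^{b-1}(1-u)^{-a}$ (equivalently, as in the paper, $b\,u^{b-1}F(a,b+1;b+1;u)$), not $b\,u^{b-1}F(a,b-1;b;u)$, which is not elementary in general; your fallback direct verification yields the correct identity, so the argument goes through.
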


\begin{proof}
 According to \cite[15.4.6]{NIST} we have
\begin{align*}
	F(a, b; b; z) = (1 - z)^{-a}
\end{align*}
and according to \cite[15.5.3]{NIST} we have
\begin{align*}
	\frac{\partial}{\partial z} z^{b} F(a, b; c; z) = b z^{b - 1} F(a, b + 1; c; z).
\end{align*}
In particular, the special value $F(a, b; b+1; z)$ satisfies
$$\frac{\partial}{\partial z} z^{b} F(a, b; b+1; z) = b z^{b - 1} F(a, b + 1; b + 1; z) = b z^{b - 1} (1 - z)^{-a}.$$
Using this together with $1 - \frac{q(\lambda)}{q(\lambda_{Z^+})} = - \frac{q(\lambda_{Z^-})}{q(\lambda_{Z^+})}$ and $q(\lambda_{Z^+}) = \frac{(\lambda, \psi(Z)) (\lambda, \psi(\overline{Z}))}{4 q(Y)}$ yields the result.
\end{proof}

\begin{defn}\thlabel{def:pForm}
	Define
	$$\tilde{p}_Z(\lambda) = \frac{(\lambda, \psi(Z))^{\kappa - 1}}{(-1)^{\frac{n}{2} - 1} 4^{\kappa} \left(\kappa - \frac{n}{2}\right)} \frac{p_Z(\lambda)}{q(\lambda_{Z^-})^{\frac{n}{2}}} q(\lambda_{Z^+})^{\frac{n}{2} - \kappa} F\left(1 - \frac{n}{2}, \kappa - \frac{n}{2}; \kappa - \frac{n}{2} + 1; \frac{q(\lambda)}{q(\lambda_{Z^+})}\right).$$
\end{defn}

\begin{rem}\thlabel{rem:pRewriting}
	The representation of $\tilde{p}_Z(\lambda)$ in \thref{def:pForm} is usually only useful if $q(\lambda) > 0$. If $q(\lambda) < 0$, we use \cite[15.8.1]{NIST} to rewrite it as
	$$\tilde{p}_Z(\lambda) = \frac{(\lambda, \psi(\overline{Z}))^{1-\kappa}}{(-1)^{\frac{n}{2} - 1} 4^\kappa (\kappa - \frac{n}{2}) q(Y)^{1-\kappa}} q(\lambda_{Z^-})^{\frac{n}{2} - 1} p_Z(\lambda)F\left(1 - \frac{n}{2}, 1; \kappa - \frac{n}{2} + 1; \frac{q(\lambda)}{q(\lambda_{Z^-})}\right).$$
	Although we will not need it here, we mention that for $q(\lambda) = 0$ we have $F(a, b; c; 0) = 1$ and thus
	$$\tilde{p}_Z(\lambda) = - \frac{p_Z(\lambda)}{\left(\kappa - \frac{n}{2}\right) (\lambda, \psi(Z))(\lambda, \psi(\overline{Z}))^{\kappa}}.$$
\end{rem}

\begin{rem}\thlabel{rem:ModularityTildeP}
	Since the function $\frac{(\lambda, \psi(\overline{Z}))}{q(Y)}$ is invariant under the weight $1$ slash operator with respect to the subgroup $\Gamma_\lambda$, the differential form $\tilde{p}_Z(\lambda)$ is invariant under the weight $-\kappa$ slash operator with respect to $\Gamma_\lambda$. I.e. it defines an element in $\calA^{n, n-1}(\Gamma_\lambda \bs \IH_n, \calL_{-\kappa})$ away from the singularity along $C_\lambda$ if $q(\lambda) > 0$ and $T_\lambda$ if $q(\lambda) < 0$. Moreover, it satisfies $\tilde{p}_Z(\lambda) \vert_{-\kappa} \gamma = \tilde{p}_Z(\gamma^{-1} \lambda)$ for all $\gamma \in G$.
\end{rem}

\begin{thm}\thlabel{thm:DifferentialEq}
	We have
	\begin{align*}
		\xi_{-\kappa} \tilde{p}_Z(\lambda) = \frac{1}{(\lambda, \psi(Z))^\kappa}.
	\end{align*}
\end{thm}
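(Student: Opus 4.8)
The plan is to compute $\xi_{-\kappa} \tilde p_Z(\lambda)$ directly from the definition $\xi_{-\kappa} = q(Y)^{-\kappa} \overline{*}\,\overline{\partial}$ applied to the product defining $\tilde p_Z(\lambda)$, reducing everything to the formulas already established. Write $\tilde p_Z(\lambda) = c_{n,\kappa}\, A(Z)\cdot \frac{p_Z(\lambda)}{q(\lambda_{Z^-})^{n/2}}$, where $A(Z) = (\lambda,\psi(Z))^{\kappa-1} q(\lambda_{Z^+})^{n/2-\kappa} F\big(1-\tfrac n2,\kappa-\tfrac n2;\kappa-\tfrac n2+1;\tfrac{q(\lambda)}{q(\lambda_{Z^+})}\big)$ and $c_{n,\kappa}$ is the constant in \thref{def:pForm}. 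Since $A(Z)$ is holomorphic away from the cycle (it is a holomorphic function of $Z$: note $(\lambda,\psi(Z))$ and $q(\lambda_{Z^+}) = \frac{(\lambda,\psi(Z))(\lambda,\psi(\overline Z))}{4q(Y)}$ combine so that the $q(Y)$ and $(\lambda,\psi(\overline Z))$ factors cancel against the hypergeometric argument — this is exactly why the representation in \thref{rem:pRewriting} for $q(\lambda)<0$ makes the holomorphy manifest), the Leibniz rule for $\overline{\partial}$ on the product gives only one surviving term once we contract with $\overline{*}$, plus a term where $\overline{\partial}$ hits $A$.

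First I would apply the quotient/product rule:
\begin{align*}
\overline{\partial}\!\left(A(Z)\frac{p_Z(\lambda)}{q(\lambda_{Z^-})^{n/2}}\right) = \overline{\partial}A(Z)\wedge \frac{p_Z(\lambda)}{q(\lambda_{Z^-})^{n/2}} + A(Z)\,\overline{\partial}\!\left(\frac{p_Z(\lambda)}{q(\lambda_{Z^-})^{n/2}}\right).
\end{align*}
Applying $\overline{*}$ and then multiplying by $q(Y)^{-\kappa}$ turns the second term into $A(Z)\cdot \xi_{-\kappa}\big(q(Y)^{\kappa-1}\tfrac{p_Z(\lambda)}{q(\lambda_{Z^-})^{n/2}}\big)$ up to rescaling, but since $\xi_{-1}\tfrac{p_Z(\lambda)}{q(\lambda_{Z^-})^{n/2}}=0$ by \thref{cor:pIsHol} and $q(Y)^{\kappa-1}$ is real (so it can be pulled through $\overline{*}$ as a scalar, contributing only via $\overline{\partial}$ of it, which again hits $p_Z$-type terms), the second term vanishes. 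So I am left with the first term only: $\xi_{-\kappa}\tilde p_Z(\lambda) = c_{n,\kappa}\, q(Y)^{-\kappa}\, \overline{*}\big(\overline{\partial}A(Z)\wedge p_Z(\lambda)\big)/q(\lambda_{Z^-})^{n/2}$, after being careful that $A$ is holomorphic so $\overline{\partial}$ of the $(\lambda,\psi(Z))^{\kappa-1}$ and $q(\lambda_{Z^+})^{n/2-\kappa}F(\cdots)$ pieces only produces $d\overline z_j$-components through the $q(Y)$ and $(\lambda,\psi(\overline Z))$ dependence hidden inside — but in fact $A$ genuinely is a holomorphic function of $Z$, so I should instead group the hypergeometric part with $q(\lambda_{Z^+})^{n/2-\kappa}$ as in \thref{lem:DifferentialEq} and apply that lemma verbatim: $\overline{\partial}\big(q(\lambda_{Z^+})^{n/2-\kappa}F(\cdots)\big)$ equals an explicit multiple of $(\lambda,\psi(Z))\,q(\lambda_{Z^+})^{-\kappa}q(\lambda_{Z^-})^{n/2-1}\,\overline{\partial}\tfrac{(\lambda,\psi(\overline Z))}{q(Y)}$.

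Then the computation collapses: substituting \thref{lem:DifferentialEq}, the factor $(\lambda,\psi(Z))^{\kappa-1}$ out front combines with the extra $(\lambda,\psi(Z))$ to give $(\lambda,\psi(Z))^{\kappa}$, the powers $q(\lambda_{Z^+})^{-\kappa}$ rewrite via $q(\lambda_{Z^+}) = \frac{(\lambda,\psi(Z))(\lambda,\psi(\overline Z))}{4q(Y)}$, and the wedge $\overline{*}\big(\overline{\partial}\tfrac{(\lambda,\psi(\overline Z))}{q(Y)}\wedge p_Z(\lambda)\big)$ is exactly $-4\,q(\lambda_{Z^-})/q(Y)$ by \thref{lem:pWedgep}. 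Collecting the powers of $q(\lambda_{Z^-})$ — namely $q(\lambda_{Z^-})^{n/2-1}$ from \thref{lem:DifferentialEq}, $q(\lambda_{Z^-})^{-n/2}$ from the denominator of $\tilde p_Z$, and $q(\lambda_{Z^-})^{+1}$ from \thref{lem:pWedgep} — they cancel completely, as do the powers of $q(Y)$ and $(\lambda,\psi(\overline Z))$, and the constant $c_{n,\kappa} = \frac{1}{(-1)^{n/2-1}4^\kappa(\kappa-n/2)}$ is precisely designed to cancel the $(-1)^{n/2}/4$, the $(\kappa-n/2)$, the $-4$ and the $4^{-\kappa}$ that appear, leaving $\frac{1}{(\lambda,\psi(Z))^\kappa}$. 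The main obstacle is purely bookkeeping: tracking the many factors of $4$, $q(Y)$, $(-1)^{n/2}$ and $(\lambda,\psi(\overline Z))$ through the substitutions without sign or exponent errors, and justifying cleanly that the $\overline{\partial}A$ term is the only survivor — i.e. that $A$ really is holomorphic in $Z$ (equivalently, checking via \thref{rem:pRewriting} that $\tilde p_Z(\lambda)$ is $q(Y)^{\kappa-1}(\lambda,\psi(\overline Z))^{1-\kappa}$ times a holomorphic-in-$\overline Z$-free expression times $p_Z(\lambda)$), so that no cross-terms involving $\overline{\partial}$ of $A$'s hidden $\overline Z$-dependence arise.
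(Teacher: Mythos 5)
Your outline is the paper's own argument: apply the Leibniz rule to the product defining $\tilde p_Z(\lambda)$, kill the term where $\overline{\partial}$ hits $p_Z(\lambda)/q(\lambda_{Z^-})^{n/2}$ via \thref{cor:pIsHol}, evaluate $\overline{\partial}\bigl(q(\lambda_{Z^+})^{n/2-\kappa}F(\cdots)\bigr)$ via \thref{lem:DifferentialEq}, and contract using \thref{lem:pWedgep}. But two points in your write-up are genuinely wrong or missing. First, $A(Z)=(\lambda,\psi(Z))^{\kappa-1}q(\lambda_{Z^+})^{n/2-\kappa}F(\cdots)$ is \emph{not} a holomorphic function of $Z$: the factor $q(\lambda_{Z^+})=\frac{(\lambda,\psi(Z))(\lambda,\psi(\overline Z))}{4q(Y)}$ depends on $\overline Z$, and \thref{lem:DifferentialEq} --- which you then apply --- asserts precisely that $\overline{\partial}$ of this piece is nonzero; if $A$ were holomorphic you would conclude $\xi_{-\kappa}\tilde p_Z(\lambda)=0$, contradicting the theorem. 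The only holomorphy needed is that of $(\lambda,\psi(Z))^{\kappa-1}$, so that $\overline{\partial}A=(\lambda,\psi(Z))^{\kappa-1}\,\overline{\partial}\bigl(q(\lambda_{Z^+})^{n/2-\kappa}F(\cdots)\bigr)$; and the vanishing of the other Leibniz term needs no detour through $q(Y)^{\kappa-1}$: since $\overline{*}_{-1}$ is bijective, \thref{cor:pIsHol} already gives $\overline{\partial}\bigl(p_Z(\lambda)/q(\lambda_{Z^-})^{n/2}\bigr)=0$.

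Second, the final cancellation is not pure bookkeeping: you must use that the Hodge-$\overline{*}_{-\kappa}$-operator is \emph{anti-linear}, so the scalar coefficient multiplying $\overline{\partial}\tfrac{(\lambda,\psi(\overline Z))}{q(Y)}\wedge p_Z(\lambda)$ is conjugated when pulled out; in particular $(\lambda,\psi(Z))^{\kappa}$ becomes $(\lambda,\psi(\overline Z))^{\kappa}$, and it is this conjugate that cancels the $(\lambda,\psi(\overline Z))^{-\kappa}$ coming from $q(\lambda_{Z^+})^{-\kappa}$, leaving the holomorphic kernel $1/(\lambda,\psi(Z))^{\kappa}$. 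If the scalars are passed through $\overline{*}$ unconjugated, as your sketch implicitly does, the computation lands on $1/(\lambda,\psi(\overline Z))^{\kappa}$ instead, so the step ``the powers of $(\lambda,\psi(\overline Z))$ cancel'' fails as stated. A minor related slip: \thref{lem:pWedgep} evaluates $\overline{*}_{-1}$, not $\overline{*}$, of the wedge, so $\overline{*}\bigl(\overline{\partial}\tfrac{(\lambda,\psi(\overline Z))}{q(Y)}\wedge p_Z(\lambda)\bigr)=-4\,q(\lambda_{Z^-})$; with the value $-4\,q(\lambda_{Z^-})/q(Y)$ that you quote, the powers of $q(Y)$ do not close either.
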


\begin{proof}
This is a direct consequence of \thref{lem:pWedgep}, \thref{cor:pIsHol} and \thref{lem:DifferentialEq}.
\end{proof}

\section{Locally Harmonic Maass Forms}\label{sec:LocallyHarmonicMF}

In this section we introduce locally harmonic Maass forms and prove that they satisfy a current equation involing cycle integrals along the real analytic cycles $C_\mu$. These are generalizations of \cite{BringmannLocallyHarmonic} (signature $(2, 1)$) and \cite[Section 5]{HilbertPaper} (signature $(2, 2)$) to general signature $(2, n)$.

\subsection{Locally Harmonic Maass Forms}

We start with the basic definition.

\begin{defn}
Let $\mu \in V$ with $q(\mu) > 0,\ \kappa > n$ and $\Gamma \subseteq O^+(L)$ a neat congruence subgroup. We define
$$\Omega_{\mu, -\kappa, \Gamma}^{\cusp}(Z) = \sum_{\gamma \in \Gamma_\mu \bs \Gamma} \tilde{p}_Z(\mu) \vert_{-\kappa} \gamma = \sum_{\lambda \in \Gamma \mu} \tilde{p}_Z(\lambda).$$
We will write $\Omega_\mu^{\cusp}$, if the weight $\kappa$ and the group $\Gamma$ are clear from the context. We denote the space generated by $\Omega_{\mu, -\kappa, \Gamma}^{\cusp},\ \mu \in V,\ q(\mu) > 0$ by $H_{-\kappa}^{\loc}(\Gamma)$ and call elements in $H_{-\kappa}^{\loc}(\Gamma)$ \emph{locally harmonic Maass forms}. Since the properties of \thref{lem:omegaProperties} also hold for $\Omega_\mu^{\cusp}$, we can define
$$C(\Omega^{\cusp}) = \sum_{\substack{\mu \in \Gamma \bs V / \IQ^\times \\ q(\mu) > 0}} c(\mu) \Gamma C_\mu,$$
if $\Omega^{\cusp} \in H_{-\kappa}^{\loc}(\Gamma)$ is given by
$$\Omega^{\cusp}(Z) = \sum_{\substack{\mu \in \Gamma \bs V / \IQ^\times \\ q(\mu) > 0}} c(\mu) q(\mu)^\kappa \Omega_{\mu, -\kappa, \Gamma}^{\cusp}.$$
\end{defn}

\begin{thm}\thlabel{thm:LocallyHarmonicMF}
\begin{enumerate}
	\item $\Omega_\mu^{\cusp}$ converges absolutely and locally uniformly away from its singularities along the $\Gamma$-translates of $C_\mu$ and defines an $(n, n-1)$-form with values in $\calL_{-\kappa}$.
\item We have $\xi_{-\kappa} \Omega_\mu^{\cusp} = \omega_\mu^{\cusp}$ away from the singularities. In particular, $\xi_{-\kappa} H_{-\kappa}^{\loc}(\Gamma)$ is the space generated by $\omega_\mu^{\cusp}$ and $\Omega_\mu^{\cusp}$ is harmonic outside the singularities.
\end{enumerate}
\end{thm}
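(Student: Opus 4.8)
The plan is to exploit the fact, already recorded in \thref{rem:ModularityTildeP}, that the single term $\tilde{p}_Z(\lambda)$ is a weight $-\kappa$ invariant $(n,n-1)$-form for $\Gamma_\lambda$ with its singularity confined to $C_\lambda$, together with the $\xi$-identity $\xi_{-\kappa}\tilde{p}_Z(\lambda) = (\lambda,\psi(Z))^{-\kappa}$ from \thref{thm:DifferentialEq}. Granting absolute and locally uniform convergence, part (2) is then essentially formal: $\xi_{-\kappa}$ is a first-order differential operator built from $\overline{*}$ and $\overline{\partial}$, so it commutes with locally uniformly convergent sums, and termwise one has $\xi_{-\kappa}\sum_{\lambda\in\Gamma\mu}\tilde{p}_Z(\lambda) = \sum_{\lambda\in\Gamma\mu}(\lambda,\psi(Z))^{-\kappa} = \omega_\mu^{\cusp}(Z)$ away from the singular locus. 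Modularity of the sum follows from $\tilde{p}_Z(\lambda)\vert_{-\kappa}\gamma = \tilde{p}_Z(\gamma^{-1}\lambda)$ (again \thref{rem:ModularityTildeP}) since $\gamma$ permutes the orbit $\Gamma\mu$; that the image is a section of $\calL_{-\kappa}$ rather than $\calL_\kappa$ is the defining property of $\xi_{-\kappa}$. Harmonicity outside the singularities is immediate from $\Delta_{-\kappa} = \xi_{\kappa}\xi_{-\kappa}$ and $\xi_\kappa\omega_\mu^{\cusp} = 0$, the latter holding because $\omega_\mu^{\cusp}$ is holomorphic there (so $\overline{\partial}\omega_\mu^{\cusp}=0$) — strictly speaking one should note $\xi_\kappa$ acts here on the $(0,0)$-form $\omega_\mu^{\cusp}$ via $\overline{*}_\kappa\overline{\partial}$, which kills holomorphic functions.

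The real content is therefore part (1), the convergence. The strategy is to compare $\Omega_\mu^{\cusp}$ with the known convergent majorant $\omega_\mu^{\cusp}$, which converges absolutely and locally uniformly for $\kappa > n$ by Oda's classical estimate. Concretely, I would bound the pointwise norm of $\tilde{p}_Z(\lambda)$ with respect to the Petersson metric. From \thref{def:pForm}, $\|\tilde{p}_Z(\lambda)\|$ is, up to constants and the bounded hypergeometric factor, of the size $|(\lambda,\psi(Z))|^{\kappa-1}\,\|p_Z(\lambda)\|\,|q(\lambda_{Z^-})|^{-n/2}\,|q(\lambda_{Z^+})|^{n/2-\kappa}$. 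Using $q(\lambda_{Z^+}) = |(\lambda,\psi(Z))|^2/(4q(Y))$ and $\|p_Z(\lambda)\|^2 \sim q(\lambda_{Z^-})/q(Y)$ (which is \thref{lem:pWedgep} read as a norm computation), the powers of $(\lambda,\psi(Z))$ combine to give a bound of the shape $C(Z)\,|(\lambda,\psi(Z))|^{-\kappa}$ times a power of $|q(\lambda_{Z^-})/q(\lambda_{Z^+})| \le 1$ off the cycle, with $C(Z)$ locally bounded. Summing over $\lambda\in\Gamma\mu$ then reduces to the convergence of $\sum_\lambda |(\lambda,\psi(Z))|^{-\kappa}$, i.e. exactly the convergence of $\omega_\mu^{\cusp}$. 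Local uniformity away from $\cup_\gamma \gamma C_\mu$ comes from the same estimate since $C(Z)$ and the cutoff are locally uniform there.

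The main obstacle I anticipate is making the norm estimate of a single $\tilde{p}_Z(\lambda)$ clean and uniform, because the hypergeometric representation in \thref{def:pForm} degenerates near $C_\mu$ (where $q(\lambda_{Z^-})\to 0$) while the rewriting in \thref{rem:pRewriting} is tailored to $q(\lambda)<0$; for the cusp-form case $q(\mu)>0$ one must check that the factor $q(\lambda_{Z^+})^{n/2-\kappa}F(1-\tfrac n2,\kappa-\tfrac n2;\kappa-\tfrac n2+1;q(\lambda)/q(\lambda_{Z^+}))$ stays bounded on compact subsets of $\IH_n\setminus C_\lambda$ uniformly in $\lambda$, using that $q(\lambda)/q(\lambda_{Z^+}) = 1 - q(\lambda_{Z^-})/q(\lambda_{Z^+}) \in [0,1)$ there and that $F$ extends continuously to $[0,1)$ (and even to $1$ when $\kappa - \tfrac n2 > 1-\tfrac n2$, i.e. always, since $1-\tfrac n2 < 0 \le \kappa - \tfrac n2$, so $c-a-b = 0$ is the borderline — one may need $c-a-b>0$, which holds as $c-a-b = \tfrac n2 > 0$). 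Once this boundedness is in hand, the domination by $\omega_\mu^{\cusp}$ closes the argument, and the interchange of $\xi_{-\kappa}$ with the sum is justified by standard results on differentiating locally uniformly convergent series of real analytic forms.
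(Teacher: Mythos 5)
Your part (2) is essentially the paper's argument (termwise application of \thref{thm:DifferentialEq}, modularity via \thref{rem:ModularityTildeP}, harmonicity from $\Delta_{-\kappa}=\xi_\kappa\xi_{-\kappa}$ and holomorphy of $\omega_\mu^{\cusp}$), and your overall strategy for part (1) — domination by the convergent series $\omega_\mu^{\cusp}$, boundedness of the hypergeometric factor because $c-a-b=\tfrac n2>0$ — is also the paper's. But the key estimate in part (1) has a direction error that leaves a genuine gap. Carrying out your own norm computation, with $\|p_Z(\lambda)\|^2\asymp |q(\lambda_{Z^-})|/q(Y)$ and $q(\lambda_{Z^+})=|(\lambda,\psi(Z))|^2/(4q(Y))$, one finds
\[
\|\tilde p_Z(\lambda)\|\;\asymp\; C(Z)\,|(\lambda,\psi(Z))|^{-\kappa}\left(\frac{q(\lambda_{Z^+})}{|q(\lambda_{Z^-})|}\right)^{\frac{n-1}{2}},
\]
i.e.\ the leftover factor is a \emph{negative} power of $|q(\lambda_{Z^-})/q(\lambda_{Z^+})|$, hence $\ge 1$, not $\le 1$ as you assert. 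This is unavoidable: $\tilde p_Z(\lambda)$ genuinely blows up along $C_\lambda$ (like $|q(\lambda_{Z^-})|^{(1-n)/2}$ for $n\ge 2$; the degeneration sits in the factor $q(\lambda_{Z^-})^{-n/2}p_Z(\lambda)$, not in the hypergeometric function, which tends to the finite value $\Gamma(\kappa-\tfrac n2+1)\Gamma(\tfrac n2)/\Gamma(\kappa)$ there), whereas $(\lambda,\psi(Z))^{-\kappa}$ is regular on $C_\lambda$ since $q(\lambda)>0$. So no bound $\|\tilde p_Z(\lambda)\|\le C(Z)|(\lambda,\psi(Z))|^{-\kappa}$ with $C(Z)$ independent of $\lambda$ can hold near the cycles, and even on a compact $K$ disjoint from all $\Gamma$-translates of $C_\mu$ such a bound requires a lower bound on $|q(\lambda_{Z^-})|$ that is uniform in $\lambda\in\Gamma\mu$, which is not automatic: a priori infinitely many cycles $C_\lambda$, $\lambda\in\Gamma\mu$, could pass arbitrarily close to $K$.

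The missing ingredient — and the actual content of the paper's convergence proof — is the lattice-point finiteness statement: for $K\subseteq\IH_n$ compact, $m=q(\mu)$ and $\varepsilon>0$, only finitely many $\lambda\in L'$ with $q(\lambda)=m$ satisfy $|q(\lambda_{Z^{\pm}})|<\varepsilon$ for some $Z\in K$. Since $q(\lambda_{Z^+})=q(\mu)+|q(\lambda_{Z^-})|$ on the orbit, this yields $q(\lambda_{Z^+})/|q(\lambda_{Z^-})|\le 1+q(\mu)/\varepsilon$ for all but finitely many $\lambda$, uniformly on $K$; only then does your comparison with $\omega_\mu^{\cusp}$ close, and the remaining finitely many terms are handled by continuity away from their cycles. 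Without this uniformity (or some equivalent counting argument) the proposed majorization does not establish absolute and locally uniform convergence.
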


\begin{proof}
(i) For convergence observe that it suffices to show that
	$$\left\lvert\left(\frac{q(\lambda_{Z^+})}{q(\lambda_{Z^-})}\right)^{\frac{n}{2}} \frac{p_Z(\lambda)}{(\lambda, \psi(Z))} \right\rvert$$
	is bounded for $\lambda \in \Gamma \mu$ and $Z \in K$, where $K$ is compact, so that the convergence follows from the convergence of $\omega_\mu^{\cusp}$. This follows from the fact, that for compact subsets $K \subseteq \IH_n$ the set
	$$\{ \lambda \in L' \ \vert\ q(\lambda) = m \text{ and there exists } Z \in \IH_n \text{ with } \pm q(\lambda_{Z^\pm}) < \varepsilon \}$$
	is finite.
	For the singularities observe that $(\mu, \psi(Z)) \neq 0$, since $q(\mu) > 0$, so that the singularities come from the $\Gamma$-translates of $q(\mu_{Z^-}) = 0$ and this is exactly the singularity along the $\Gamma$-translates of $C_\mu$. That this defines an $(n, n-1)$-form with values in $\calL_{-\kappa}$ follows from \thref{rem:ModularityTildeP}.
	
(ii) The first two assertions follow from \thref{thm:DifferentialEq}. For harmonicity use that $\omega_\mu^{\cusp}$ is holomorphic and $\Delta_{-\kappa} = \xi_\kappa \xi_{-\kappa}$ on $(n,n-1)$-forms.
\end{proof}

\begin{rem}\thlabel{rem:LocallyHarmonicMaassForm}
	Slightly more general one could define a locally harmonic Maass form as a harmonic $(n, n-1)$-form with values in $\calL_{-\kappa}$ of moderate growth that has the same type of singularity as a linear combination of $\Omega_\mu^{\cusp}$ for $\mu \in V, q(\mu) > 0$. This would coincide with the original definition in \cite{BringmannLocallyHarmonic}.
\end{rem}

\subsection{Integrals Around Real Analytic Cycles}

Let $\mu = b_1$, so that the real analytic cycle $C_\mu$ is given by
$$C_\mu = \{ (iy_1, x_2, \ldots, x_n)\ \vert\ y_1 > 0\} \subseteq \IH_n.$$
Let $0 < \varepsilon < 1$. Then the set
$$B_\varepsilon(C_\mu) = \left\{ (z_1, \ldots, z_n) \in \IH_n \ \bigg\vert\ x_1^2 < y_1^2 \varepsilon^2, \sum_{j = 2}^n y_j^2 < y_1^2 \varepsilon^2 \right\}$$
defines an open neighborhood of $C_\mu$, which is diffeomorphic to
$$C_\mu \times (-1, 1) \times B_{n-1},$$
where $B_{n-1} \subseteq \IR^{n-1}$ denotes the open ball of radius $1$ around $0$. The diffeomorphism is given by
$$\varphi_\varepsilon : ((iy_1', x_2', \ldots, x_n'), (x_1', y_2', \ldots, y_n')) \mapsto (y_1' x_1' \varepsilon + i y_1', x_2' + i y_1' y_2' \varepsilon, \ldots, x_n' + i y_1' y_n' \varepsilon).$$
We write $X' = (x_2', \ldots, x_n'),\ Y' = (y_2', \ldots, y_n')$ and $q'(Y') = y_2'^2 + \ldots + y_n'^2$.

For general $\mu \in V$ with $q(\mu) > 0$, we choose $\gamma \in G$ with $\sqrt{q(\mu)} \gamma b_1 = \mu$. Then $C_\mu = \gamma C_{b_1}$ and we define $B_\varepsilon(C_\mu) = \gamma B_\varepsilon(C_{b_1})$.

\begin{lem}\thlabel{lem:RealAnalCycleIntegral}
Let $\mu \in V$ with $q(\mu) > 0$ and let $h : \IH_n \to \IC$ be cuspidal of weight $\kappa$ for a neat congruence subgroup $\Gamma$ and smooth in $B_\varepsilon(C_\mu)$. Then
\begin{align*}
\lim_{\varepsilon \to 0} \int_{\Gamma_\mu \bs \partial B_\varepsilon(C_\mu)} h(Z) \tilde{p}_Z(\mu) = -C_{n, \kappa} q(\mu)^{\frac{n}{2}-\kappa} \int_{\Gamma_\mu \bs C_\mu} h(Z) (\mu, \psi(Z))^{\kappa - n} dZ,
\end{align*}
where
$$C_{n, \kappa} = \frac{(-i)^n \Gamma\left(\kappa - \frac{n}{2} + 1\right) \Gamma\left(\frac{n}{2}\right)}{4^{\kappa} \left(\kappa - \frac{n}{2}\right) \Gamma(\kappa)} (n-1) \vol(S_{n-2}) \int_{0}^1 \frac{r^{n-2}}{\sqrt{r^2 + 1}^n} dr.$$
\end{lem}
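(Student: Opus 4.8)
The strategy is to localize the boundary integral along $C_\mu$, where $\tilde p_Z(\mu)$ is the product of a \emph{smooth} factor and the form $p_Z(\mu)/q(\mu_{Z^-})^{\frac n2}$, which by \thref{cor:pIsHol} is $\overline{\partial}$-closed — hence closed, being of bidegree $(n,n-1)$ — away from $C_\mu$. Its ``residue'' along $C_\mu$ supplies the geometric part of $C_{n,\kappa}$, while the limiting value of the smooth factor, governed by the Gauss hypergeometric function at $1$, supplies the $\Gamma$-factors.

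\emph{Step 1 (Reduction to $\mu=b_1$).} Choose $\gamma\in G$ with $\sqrt{q(\mu)}\,\gamma b_1=\mu$ and substitute $Z=\gamma W$. Then $C_\mu=\gamma C_{b_1}$, $B_\varepsilon(C_\mu)=\gamma B_\varepsilon(C_{b_1})$, $\Gamma_\mu=\gamma\,\Gamma'_{b_1}\gamma^{-1}$ with $\Gamma'=\gamma^{-1}\Gamma\gamma$, the measure is invariant, $h\vert_\kappa\gamma$ is cuspidal for $\Gamma'$ and smooth near $C_{b_1}$, and by \thref{rem:ModularityTildeP} together with the homogeneity $\tilde p_Z(r\lambda)=r^{-\kappa}\tilde p_Z(\lambda)$ (immediate from \thref{def:pForm}) one has $\tilde p_{\gamma W}(\mu)\vert_\kappa\gamma=q(\mu)^{-\kappa/2}\tilde p_W(b_1)$. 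Since $(\mu,\psi(Z))^{\kappa-n}$ scales by $q(\mu)^{(\kappa-n)/2}$ along $C_\mu$, the asserted identity for $\mu$ becomes the identity for $b_1$ (where $q(b_1)=1$), the factor $q(\mu)^{\frac n2-\kappa}$ being reproduced exactly. So we may assume $\mu=b_1$; then $(\mu,\psi(Z))=2z_1$, $q(\mu_{Z^+})=|z_1|^2/q(Y)$, $q(\mu_{Z^-})=-\big(x_1^2+\sum_{j\geq2}y_j^2\big)/q(Y)$, and $C_\mu=\{x_1=0,\ y_2=\dots=y_n=0\}$, parametrized by $(y_1,x_2,\dots,x_n)$, on which $dZ=i\,dy_1\wedge dx_2\wedge\cdots\wedge dx_n$.

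\emph{Step 2 (Localization).} Using $\overline{*}\,d\overline z_i=-\tfrac12\sum_j h^{ij}(Y)(4iq(Y))^{-n}\widehat{d\overline z_j}$ and $h^{ij}(Y)=4y_iy_j-2q(Y)\delta_{ij}\epsilon_i$ from Section \ref{sec:HermitianGeometry}, a direct computation of $p_Z(\mu)=q(Y)\,\overline{*}\,\overline{\partial}\big(2\overline z_1/q(Y)\big)$ yields
\begin{align*}
p_Z(\mu)=-\frac{2}{(4i)^n q(Y)^n}\Big[\big(\textstyle\sum_{j\geq2}y_j^2-ix_1y_1\big)\widehat{d\overline z_1}+(y_1-ix_1)\sum_{k\geq2}y_k\,\widehat{d\overline z_k}\Big],
\end{align*}
so $p_Z(\mu)$ vanishes on $C_\mu$ and is of first order in the normal coordinates $(x_1,y_2,\dots,y_n)$ there. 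Pulling everything back by $\varphi_\varepsilon$ rescales these normal coordinates by $\varepsilon$: the factor $q(\mu_{Z^-})^{-\frac n2}$ contributes $\varepsilon^{-n}$ while the pulled-back $(n,n-1)$-form together with its first-order coefficient contributes $\varepsilon^{n}$, so all powers of $\varepsilon$ cancel and $\varphi_\varepsilon^*\big(h\,\tilde p_Z(\mu)\big)$ has a finite limit on each stratum of $C_\mu\times\partial\big((-1,1)\times B_{n-1}\big)$; on the $C_\mu$-factor $h(\varphi_\varepsilon(\cdot))\to h(Z_0)$. Moreover $q(\mu)/q(\mu_{Z^+})=q(\mu)/\big(q(\mu)-q(\mu_{Z^-})\big)\to1$ from below along $C_\mu$, and since $c-a-b=\tfrac n2>0$, Gauss's theorem \cite[15.4.20]{NIST} gives $F\big(1-\tfrac n2,\kappa-\tfrac n2;\kappa-\tfrac n2+1;q(\mu)/q(\mu_{Z^+})\big)\to\Gamma(\kappa-\tfrac n2+1)\Gamma(\tfrac n2)/\Gamma(\kappa)$; with the prefactor $\big((-1)^{\frac n2-1}4^\kappa(\kappa-\tfrac n2)\big)^{-1}$ of \thref{def:pForm} and $(\mu,\psi(Z))^{\kappa-1}\to(2z_1)^{\kappa-1}$ this reproduces the factor $(-i)^n\Gamma(\kappa-\tfrac n2+1)\Gamma(\tfrac n2)\big/\big(4^\kappa(\kappa-\tfrac n2)\Gamma(\kappa)\big)$ in $C_{n,\kappa}$.

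\emph{Step 3 (The normal integral and conclusion).} It remains to evaluate $\lim_{\varepsilon\to0}\int_{\partial((-1,1)\times B_{n-1})}\varphi_\varepsilon^*\big(q(\mu_{Z^-})^{-\frac n2}p_Z(\mu)\big)$ as an $n$-form on $C_\mu$; only its component proportional to $dy_1\wedge dx_2\wedge\cdots\wedge dx_n$, i.e.\ to $dZ\vert_{C_\mu}$, enters the cycle integral. Expanding the wedge products $\widehat{d\overline z_k}$ under $\varphi_\varepsilon$ and tracking which strata of $\partial\big((-1,1)\times B_{n-1}\big)$ carry the $dZ$-component — on $\{x_1'=\pm1\}$ one has $q(\mu_{Z^-})^{-\frac n2}\sim\varepsilon^{-n}(-1)^{-\frac n2}(1+|Y'|^2)^{-\frac n2}$ up to a vanishing error, and passing to polar coordinates in $B_{n-1}$ turns the fibre integral into $\vol(S_{n-2})\int_0^1 r^{n-2}(r^2+1)^{-\frac n2}\,dr$ — and collecting all numerical and sign factors (including the $n-1$ terms of $\sum_{k\geq2}y_k\,\widehat{d\overline z_k}$) produces exactly $-C_{n,\kappa}\,q(\mu)^{\frac n2-\kappa}(\mu,\psi(Z))^{\kappa-n}\,dZ$. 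That the subleading terms drop out and the limit may be interchanged with the integral over the compact quotient $\Gamma_\mu\bs C_\mu$ follows by dominated convergence: near $C_\mu$ one has $d\big(h\,\tilde p_Z(\mu)\big)=\overline{\partial}h\wedge\tilde p_Z(\mu)+h\,\overline{\partial}\tilde p_Z(\mu)$, and both terms are dominated by an integrable function — the first because $p_Z(\mu)=O(\dist(Z,C_\mu))$, the second because $\xi_{-\kappa}\tilde p_Z(\mu)=(\mu,\psi(Z))^{-\kappa}$ is smooth near $C_\mu$ by \thref{thm:DifferentialEq} — so the tube integral has a limit independent of the shape of the shrinking neighbourhoods. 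The delicate part of the argument is precisely this Step 3: the wedge-product and orientation bookkeeping on the strata of $\partial\big((-1,1)\times B_{n-1}\big)$ needed to isolate the $dZ$-component and recover the exact constant $C_{n,\kappa}$, together with rigorously discarding the subleading contributions as $\varepsilon\to0$.
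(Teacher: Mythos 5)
Your overall strategy coincides with the paper's (reduce to $\mu=b_1$, compute $p_Z(\mu)$ explicitly, pull back along $\varphi_\varepsilon$, observe that the powers of $\varepsilon$ cancel, evaluate the hypergeometric factor by Gauss's theorem, and analyse the two boundary strata), but the actual content of the lemma --- the evaluation of the limiting boundary integral with the precise constant $C_{n,\kappa}$, including its sign --- is exactly what you defer in Step 3 ("the wedge-product and orientation bookkeeping \dots needed to \dots recover the exact constant") and never carry out. Asserting that "collecting all numerical and sign factors produces exactly $-C_{n,\kappa}q(\mu)^{\frac n2-\kappa}(\mu,\psi(Z))^{\kappa-n}dZ$" is not a proof of this lemma; in the paper this bookkeeping (the limits of $\varphi_\varepsilon^*\widehat{dx_j},\varphi_\varepsilon^*\widehat{dy_j}$ on the boundary, the restriction to the strata $\{\pm1\}\times B_{n-1}$ and $(-1,1)\times S_{n-2}$, and the assembly of the constant) occupies essentially the whole argument.

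Moreover, the parts you do compute contain errors that would corrupt the constant. First, your formula for $p_Z(\mu)$ treats $\overline{*}$ as $\IC$-linear, whereas by its definition via the hermitian pairing (see \eqref{eq:StarWedgeProdut}, where the second factor appears conjugated) it is conjugate-linear; the correct coefficients are $\sum_{j\geq2}y_j^2+ix_1y_1$ for $\widehat{d\overline z_1}$ and $i\overline z_1\,y_k=(y_1+ix_1)y_k$ for $\widehat{d\overline z_k}$, i.e.\ the conjugates of yours. This is not harmless: only the $\widehat{d\overline z_1}$-term survives on the stratum $\{x_1'=\pm1\}\times B_{n-1}$, its limit is proportional to $-x_1'\widehat{dx_1'}$, and your conjugation error flips this to $+x_1'\widehat{dx_1'}$, hence flips the sign of the final constant. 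Second, your parenthetical attribution of the factor $(n-1)$ in $C_{n,\kappa}$ to "the $n-1$ terms of $\sum_{k\geq2}y_k\widehat{d\overline z_k}$" is wrong: those terms only restrict nontrivially to the stratum $(-1,1)\times S_{n-2}$, and there their integrals vanish because $y_j'$ is odd on the sphere --- a vanishing statement that is a necessary part of the proof and that your argument never addresses. Finally, your appeal to dominated convergence "over the compact quotient $\Gamma_\mu\bs C_\mu$" is unjustified as stated, since $\Gamma_\mu\bs C_\mu$ need not be compact; this is where the cuspidality of $h$ in the hypothesis must be used.
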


\begin{proof}
Both sides of the identity are homogeneous in $\mu$ of degree $n - 2 \kappa$, so that we can assume $q(\mu) = 1$. The case of general $\mu$ can then be reduced to $\mu = b_1$. Under the diffeomorphism $\varphi_\varepsilon$, we have
\begin{align*}
	\varphi_\varepsilon^* d x_1 &= \varepsilon (y_1' dx_1' + x_1' d y_1'), \qquad &\varphi_\varepsilon^* d x_j &= d x_j', \qquad j > 1, \\
	\varphi_\varepsilon^* d y_1 &= d y_1', \qquad &\varphi_\varepsilon^* d y_j &= \varepsilon (y_1' d y_j' + y_j' d y_1'), \qquad j > 1.
\end{align*}
Let
$$\widehat{d x_j} = d x_1 \wedge d y_1 \wedge \ldots \wedge d x_{j-1} \wedge d y_{j-1} \wedge d y_j \wedge \ldots \wedge d x_n \wedge d y_n$$
and analogously $\widehat{d y_j}$. Then
\begin{align*}
	\varphi_\varepsilon^* \widehat{d x_1} &= \varepsilon^{n-1} d y_1' \wedge \bigwedge_{j = 2}^n (d x_j \wedge (y_1' d y_j' + y_j' d y_1')) = (\varepsilon y_1')^{n - 1} \widehat{d x_1'} , \\
	\varphi_\varepsilon^* \widehat{d y_1} &= \varepsilon^{n} (y_1' d x_1' + x_1' d y_1') \wedge \bigwedge_{j = 2}^n (d x_j' \wedge (y_1' d y_j' + y_j' d y_1')), \\
	\varphi_\varepsilon^* \widehat{d x_i} &= \varepsilon^{n} (y_1' d x_1' + x_1' d y_1') \wedge d y_1' \wedge \bigwedge_{2 \leq j < i} (d x_j' \wedge (y_1' d y_j' + y_j' d y_1')) \\
	&\wedge (y_1' d y_i' + y_i' d y_1') \wedge \bigwedge_{n \geq j > i} (d x_j \wedge (y_1' d y_j' + y_j' d y_1')) = (\varepsilon y_1')^{n} \widehat{d x_i'}, \\
	\varphi_\varepsilon^* \widehat{d y_i} &= \varepsilon^{n-1} (y_1' d x_1' + x_1' d y_1') \wedge d y_1' \wedge \bigwedge_{2 \leq j < i} (d x_j' \wedge (y_1' d y_j' + y_j' d y_1')) \\
	&\wedge d x_i' \wedge \bigwedge_{n \geq j > i} (d x_j' \wedge (y_1' d y_j' + y_j' d y_1')) = (\varepsilon y_1')^{n-1} \widehat{d y_i'}.
\end{align*}
Hence, a short calculation shows that restricting to the boundary
$$\varphi_\varepsilon^{-1}(\partial B_\varepsilon(C_\mu)) = C_\mu \times (\{\pm 1 \} \times B_{n-1} \cup (-1, 1) \times S_{n-2}),$$
where $S_{n-2}$ denotes the boundary of $B_{n-1}$, yields
\begin{align*}
	&\lim_{\varepsilon \to 0} \frac{\varphi_\varepsilon^* \widehat{d x_1} \Big\vert_{\varphi_\varepsilon^{-1}(\partial B_\varepsilon(C_\mu))}}{\varepsilon^{n-1}} = y_1'^{n-1} \widehat{d x_1'}, \qquad &\lim_{\varepsilon \to 0} \frac{\varphi_\varepsilon^* \widehat{d y_1} \Big\vert_{\varphi_\varepsilon^{-1}(\partial B_\varepsilon(C_\mu))}}{\varepsilon^{n-1}} &= 0, \\
	&\lim_{\varepsilon \to 0} \frac{\varphi_\varepsilon^* \widehat{d x_i} \Big\vert_{\varphi_\varepsilon^{-1}(\partial B_\varepsilon(C_\mu))}}{\varepsilon^{n-1}} = 0, \qquad &\lim_{\varepsilon \to 0} \frac{\varphi_\varepsilon^* \widehat{d y_i} \Big\vert_{\varphi_\varepsilon^{-1}(\partial B_\varepsilon(C_\mu))}}{\varepsilon^{n-1}} &= y_1'^{n-1} \widehat{d y_i'}.
\end{align*}
Since,
$$\widehat{d \overline{z}_j} = i(2i)^{n-1} \widehat{d x_j} + (2i)^{n-1} \widehat{d y_j},$$
we obtain
\begin{align}
	\lim_{\varepsilon \to 0} \frac{\varphi_\varepsilon^* \widehat{d \overline{z}_j}}{\varepsilon^{n-1}} \Big\vert_{\varphi_\varepsilon^{-1}(\partial B_\varepsilon(C_\mu))} = \begin{cases}
		i (2i y_1')^{n-1} \widehat{d x_1'} &\mbox{ if } j = 1, \\
		(2i y_1')^{n - 1} \widehat{d y_j'} & \mbox{ if } j > 1.
	\end{cases} \label{eq:limitwidehatdz}
\end{align}
Next, since $(\mu, \psi(\overline{Z})) = 2 \overline{z}_1$, we have
\begin{align*}
	p_Z(\mu) = \overline{*}_1 \overline{\partial} \frac{2 \overline{z}_1}{q(Y)} = 2 \frac{q(Y)\ \overline{*} d \overline{z}_1 - z_1\ \overline{*} \overline{\partial} q(Y)}{q(Y)}.
\end{align*}
We have using \eqref{eq:Stardz}
$$\overline{*} \overline{\partial} q(Y) = -i \sum_{i = 1}^n \epsilon_i y_i\ \overline{*} d \overline{z}_i = \frac{i}{2(4i q(Y))^n} \sum_{i, j = 1}^n \epsilon_i y_i h^{ij}(Y)\ \widehat{d \overline{z}_j}.$$
Hence,
\begin{align*}
	p_Z(\mu)
	&= -\frac{q(Y) \sum_{j = 1}^n h^{1j}(Y) \widehat{d \overline{z}_j} + i z_1 \sum_{i, j = 1}^n \epsilon_i y_i h^{ij}(Y)\ \widehat{d \overline{z}_j}}{q(Y) (4i q(Y))^n} \\
	&= -\sum_{j = 1}^n \frac{q(Y) h^{1j}(Y) + i z_1 \sum_{i = 1}^n \epsilon_i y_i h^{ij}(Y)}{q(Y) (4i q(Y))^n} \widehat{d \overline{z}_j}.
\end{align*}
We use that $h^{ij}(Y) = 4 y_i y_j - 2q(Y) \delta_{ij} \epsilon_i$ and obtain
\begin{align}
	p_Z(\mu)
	&= -\sum_{j = 1}^n \frac{q(Y) (4 y_1 y_j - 2q(Y) \delta_{1j}) + i z_1 \sum_{i = 1}^n \epsilon_i y_i (4 y_i y_j - 2q(Y) \delta_{ij} \epsilon_i)}{q(Y) (4i q(Y))^n} \widehat{d \overline{z}_j} \nonumber \\
	&= -\sum_{j = 1}^n \frac{q(Y) (4 y_1 y_j - 2q(Y) \delta_{1j}) + i z_1 (4 y_j q(Y) - 2 q(Y) y_j)}{q(Y) (4i q(Y))^n} \widehat{d \overline{z}_j}, \label{eq:pZcalc}
\end{align}
where we used $\sum_{i = 1}^n \epsilon_i y_i^2 = q(Y)$. A straight forward calculation shows that \eqref{eq:pZcalc} is equal to
\begin{align*}
	&-\sum_{j = 1}^n \frac{4 y_1 y_j - 2q(Y) \delta_{1j} + 4 i z_1 y_j - 2i \epsilon_j z_1 y_j}{(4i q(Y))^n} \widehat{d \overline{z}_j} \\
	&= - 2\frac{(-q(Y) + i \overline{z}_1 y_1) \widehat{d \overline{z}_1} + i \overline{z}_1 \sum_{j = 2}^n y_j \widehat{d \overline{z}_j}}{(4i q(Y))^n},
\end{align*}
so that the pullback of $p_Z(\mu)$ is given by
\begin{align*}
	\varphi_\varepsilon^* p_Z(\mu) = -2\frac{y_1'^2 \varepsilon}{(4 i y_1'^2 (1 - \varepsilon^2 q'(Y')))^n} \left((\varepsilon q'(Y') + i x_1') \varphi_\varepsilon^*\widehat{d \overline{z}_1} + \sum_{j = 2}^n (i \varepsilon x_1' y_j' + y_j') \varphi_\varepsilon^* \widehat{d \overline{z}_j}\right).
\end{align*}
Therefore, using \eqref{eq:limitwidehatdz} yields
\begin{align}
	\lim_{\varepsilon \to 0} \frac{\varphi_\varepsilon^* p_Z(\mu) \Big\vert_{\varphi_\varepsilon^{-1}(\partial B_\varepsilon(C_\mu))}}{\varepsilon^n} = \frac{iy_1'^{1 - n}}{2^{n}} \left(- x_1' \widehat{d x_1'} + \sum_{j = 2}^n y_j' \widehat{d y_j'}\right). \label{eq:limitpZ}
\end{align}
Moreover, in these coordinates $q(\mu_{Z^-})$ is given by
$$\varphi_\varepsilon^* q(\mu_{Z^-}) = 1 - \frac{\varepsilon^2 y_1'^2 x_1'^2 + y_1'^2}{y_1'^2 - y_1'^2 \varepsilon^2 (y_2'^2 + \ldots + y_n'^2)} = -\varepsilon^2 \frac{q'(Y') + x_1'^2}{1 - \varepsilon^2 q'(Y')}.$$
So that for $\varepsilon \to 0$ using \eqref{eq:limitpZ} we obtain
\begin{align}
	\lim_{\varepsilon \to 0} \varphi_\varepsilon^* \frac{p_Z(\mu)}{q(\mu_{Z^-})^{\frac{n}{2}}} \Bigg\vert_{\varphi_\varepsilon^{-1}(\partial B_\varepsilon(C_\mu))} = \frac{iy_1'^{1 - n}}{(-1)^{\frac{n}{2}} 2^{n}} \left(\frac{- x_1' \widehat{d x_1'} + \sum_{j = 2}^n y_j' \widehat{d y_j'}}{(q'(Y') + x_1'^2)^{\frac{n}{2}}}\right). \label{eq:LimitOfpZ}
\end{align}
Using \cite[15.4.20]{NIST} shows
\begin{align}
	\lim_{\epsilon \to 0} \varphi^*_\varepsilon F\left(1 - \frac{n}{2}, \kappa - \frac{n}{2}; \kappa - \frac{n}{2} + 1; \frac{q(\mu)}{q(\mu_{Z^+})}\right) = \frac{\Gamma\left(\kappa - \frac{n}{2} + 1\right) \Gamma\left(\frac{n}{2}\right)}{\Gamma(\kappa)}. \label{eq:HyperGeomLimit}
\end{align}
Moreover, a direct calculation yields
\begin{align}
	\lim_{\varepsilon \to 0} \varphi^*_\varepsilon \left((\mu, \psi(Z))\right)^{\kappa - 1} = (2i y_1')^{\kappa - 1}, \qquad \qquad \lim_{\varepsilon \to 0} \varphi^*_\varepsilon q(\mu_{Z^+}) = 1. \label{eq:LimitOfqZ}
\end{align}
Collecting the terms from equations \eqref{eq:LimitOfpZ}, \eqref{eq:HyperGeomLimit} and \eqref{eq:LimitOfqZ}, we obtain
\begin{align*}
&\lim_{\varepsilon \to 0} \int_{\Gamma_\mu \bs \partial B_\varepsilon(C_\mu)} h(Z) \tilde{p}_Z(\mu) \\
&= -\frac{(-1)^{n} i^{\kappa}}{2^{\kappa + n + 1} \left(\kappa - \frac{n}{2}\right)} \frac{\Gamma\left(\kappa - \frac{n}{2} + 1\right) \Gamma\left(\frac{n}{2}\right)}{\Gamma(\kappa)} \int_{\Gamma_\mu \bs C_\mu} h(Z') y_1'^{\kappa - n} d y_1' d x_2' \ldots d x_n' \\
&\times \left(2 \int_{B_{n-1}} \frac{d y_2' \ldots d y_n'}{(q'(Y') + 1)^{\frac{n}{2}}} - \sum_{j = 2}^n \int_{-1}^1 \int_{S_{n-2}} y_j' \frac{d x_1' d y_2' \ldots d y_{j-1}' d y_{j+1}' \ldots d y_n'}{(1 + x_1'^2)^{\frac{n}{2}}} \right).
\end{align*}
The second summand vanishes by the symmetry of the function $y_j'$ on the sphere $S_{n-2}$, while the integral over $B_{n-1}$ is given by
$$(n-1) \vol(S_{n-2}) \int_{0}^1 \frac{r^{n-2}}{\sqrt{r^2 + 1}^n} dr,$$
so that this is equal to
\begin{align*}
	\lim_{\varepsilon \to 0} \int_{\Gamma_\mu \bs \partial B_\varepsilon(C_\mu)} h(Z) \tilde{p}_Z(\mu) = -C_{n, \kappa} \int_{\Gamma_\mu \bs C_\mu} h(\varphi(Z')) (2i y_1')^{\kappa - n} d y_1' d x_2' \ldots d x_n'.
\end{align*}
Now, the restriction of $dZ$ to $C_\mu$ is given by $d y_1' \wedge d x_2' \wedge \ldots \wedge d x_n'$ and $(\mu, \psi(Z)) = 2i y_1'$ on $C_\mu$, so that we obtain the result.
\end{proof}

\subsection{A Current Equation For Locally Harmonic Maass Forms}

Let $\Omega^{\cusp} \in H_\kappa^{\loc}(\Gamma)$ be a locally harmonic Maass form with corresponding real analytic cycle $C(\Omega^{\cusp})$. We can consider $\Omega^{\cusp}$ as a current on rapidly decreasing smooth forms $H \in \calA^{n, n-1}(\Gamma \bs \IH_n, \calL_{-\kappa})$ via
$$[\Omega^{\cusp}](H) = \int_{\Gamma \bs \IH_n} H \wedge \overline{*}_{-\kappa} \Omega^{\cusp}.$$
We extend the $\xi_{-\kappa}$ operator to currents via
$$(\xi_{-\kappa} [\Omega^{\cusp}])(h) = -[\Omega^{\cusp}](\xi_\kappa h),$$
where $h \in \calA^{0,0}(\Gamma \bs \IH_n, \calL_\kappa)$ is rapidly decreasing. Moreover, we define for $\mu \in V$ with $q(\mu) > 0$ the current
$$\delta_{C_\mu}(h) = q(\mu)^{\frac{n}{2} - \kappa} \int_{\Gamma_\mu \bs C_\mu} h(Z) (\mu, \psi(Z))^{\kappa - n} dZ.$$
By linearity we obtain currents $\delta_{C(\Omega^{\cusp})}$ for every $\Omega^{\cusp} \in H_{-\kappa}^{\loc}(\Gamma)$.

\begin{thm}\thlabel{thm:CurrentEq}
	We have
	$$\xi_{-\kappa} [\Omega^{\cusp}] = [\xi_{-\kappa} \Omega^{\cusp}] + C_{n, \kappa} \delta_{C(\Omega^{\cusp})}.$$
\end{thm}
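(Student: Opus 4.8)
The plan is to reduce the theorem to a Stokes-type computation on the complement of a small tubular neighborhood of the cycle $C(\Omega^{\cusp})$, and to read off the boundary contribution from \thref{lem:RealAnalCycleIntegral}. By linearity and the properties in \thref{lem:omegaProperties} it suffices to treat a single $\Omega_\mu^{\cusp}$ for $\mu \in V$ with $q(\mu) > 0$, so that $C(\Omega^{\cusp}) = \Gamma C_\mu$ and $\xi_{-\kappa}\Omega_\mu^{\cusp} = \omega_\mu^{\cusp}$ by \thref{thm:LocallyHarmonicMF}. Let $h \in \calA^{0,0}(\Gamma \bs \IH_n, \calL_\kappa)$ be rapidly decreasing (one may also assume $h$ cuspidal, or reduce to that case). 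Unwinding the definitions,
\begin{align*}
(\xi_{-\kappa}[\Omega_\mu^{\cusp}])(h) = -[\Omega_\mu^{\cusp}](\xi_\kappa h) = -\int_{\Gamma \bs \IH_n} \xi_\kappa h(Z) \wedge \overline{*}_{-\kappa} \Omega_\mu^{\cusp}(Z).
\end{align*}

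First I would remove the singularity: write $B_\varepsilon = \Gamma_\mu \bs B_\varepsilon(C_\mu)$ for the tubular neighborhood of the cycle in $\Gamma \bs \IH_n$ (using the unfolding $\Gamma \bs \Gamma B_\varepsilon(C_\mu) \cong \Gamma_\mu \bs B_\varepsilon(C_\mu)$, valid since $\Gamma$ is neat and $\varepsilon$ small), and compute the integral over $\Gamma \bs \IH_n$ as $\lim_{\varepsilon \to 0}$ of the integral over the complement $(\Gamma \bs \IH_n) \setminus B_\varepsilon$. On this complement $\Omega_\mu^{\cusp}$ is a genuine smooth $(n,n-1)$-form with values in $\calL_{-\kappa}$, so \thref{lem:Stokes} applies with $h_1 = h$, $H_2 = \Omega_\mu^{\cusp}$, $h_2 = \omega_\mu^{\cusp}$, giving
\begin{align*}
-\int_{(\Gamma \bs \IH_n)\setminus B_\varepsilon} \xi_\kappa h \wedge \overline{*}_{-\kappa}\Omega_\mu^{\cusp} = \int_{(\Gamma \bs \IH_n)\setminus B_\varepsilon} h \, \overline{\omega_\mu^{\cusp}} \, q(Y)^\kappa \, d\mu(Z) - \int_{\partial((\Gamma \bs \IH_n)\setminus B_\varepsilon)} h \, \Omega_\mu^{\cusp}.
\end{align*}
As $\varepsilon \to 0$ the first term on the right tends to $[\omega_\mu^{\cusp}](h)$, i.e. to $[\xi_{-\kappa}\Omega_\mu^{\cusp}](h)$, because the integrand is absolutely integrable near $C_\mu$ (the singularity of $\Omega_\mu^{\cusp}$ is mild — of the type described in \thref{thm:LocallyHarmonicMF} — and $\omega_\mu^{\cusp}$ is holomorphic there). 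The boundary $\partial((\Gamma \bs \IH_n)\setminus B_\varepsilon)$ is $-\partial B_\varepsilon = -\Gamma_\mu \bs \partial B_\varepsilon(C_\mu)$ (orientation reversed), so the boundary term equals $+\int_{\Gamma_\mu \bs \partial B_\varepsilon(C_\mu)} h\,\Omega_\mu^{\cusp}$; unfolding $\Omega_\mu^{\cusp} = \sum_{\lambda \in \Gamma\mu}\tilde p_Z(\lambda)$ against $\Gamma_\mu \bs \partial B_\varepsilon(C_\mu)$ and using \thref{rem:ModularityTildeP} collapses this to $\int_{\Gamma_\mu \bs \partial B_\varepsilon(C_\mu)} h(Z)\,\tilde p_Z(\mu)$, whose limit is $-C_{n,\kappa}\,q(\mu)^{\frac n2 - \kappa}\int_{\Gamma_\mu \bs C_\mu} h(Z)(\mu,\psi(Z))^{\kappa-n}\,dZ = -C_{n,\kappa}\,\delta_{C_\mu}(h)$ by \thref{lem:RealAnalCycleIntegral}. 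Hence the boundary term contributes $+C_{n,\kappa}\,\delta_{C_\mu}(h)$ to $(\xi_{-\kappa}[\Omega_\mu^{\cusp}])(h)$, which is exactly the claimed identity.

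The main obstacle is the interchange of the $\varepsilon\to 0$ limit with the integrals, i.e. justifying both that $\int_{\Gamma\bs\IH_n}\xi_\kappa h\wedge\overline{*}_{-\kappa}\Omega_\mu^{\cusp}$ genuinely equals $\lim_\varepsilon$ of the truncated integral (dominated convergence near the cycle, using the controlled growth of $\tilde p_Z(\mu)$ from \thref{def:pForm} and the hypergeometric asymptotics) and that the integrand of the first right-hand term stays absolutely integrable up to $C_\mu$; one also needs that there are no cusp contributions to the Stokes boundary, which follows from $h$ being rapidly decreasing (and $\omega_\mu^{\cusp}$, $\Omega_\mu^{\cusp}$ having at worst polynomial growth at the cusps). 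The handling of general $\Omega^{\cusp}$ and of overlapping tubular neighborhoods of distinct $C_\mu$ is routine: one shrinks $\varepsilon$ so the neighborhoods of the finitely many cycles meeting a fixed fundamental domain are disjoint, and sums the boundary contributions with the coefficients $c(\mu)q(\mu)^\kappa$, recovering $C_{n,\kappa}\,\delta_{C(\Omega^{\cusp})}$.
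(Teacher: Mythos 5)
Your overall strategy is the same as the paper's: cut out the tubular neighborhood $B_\varepsilon(C_\mu)$, apply \thref{lem:Stokes} on the complement, unfold, and identify the boundary contribution via \thref{lem:RealAnalCycleIntegral}. The only (harmless) difference is that you start from the distributional side $(\xi_{-\kappa}[\Omega_\mu^{\cusp}])(h)$, apply Stokes on $(\Gamma\bs\IH_n)\setminus B_\varepsilon$ and unfold only the boundary term, whereas the paper starts from $[\omega_\mu^{\cusp}](h)$, unfolds first, and applies Stokes on $\Gamma_\mu\bs(\IH_n\setminus B_\varepsilon(C_\mu))$; these are trivially equivalent rearrangements.

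However, your sign bookkeeping in the final step is internally inconsistent. You declare $\partial\bigl((\Gamma\bs\IH_n)\setminus B_\varepsilon\bigr) = -\,\Gamma_\mu\bs\partial B_\varepsilon(C_\mu)$, conclude that the boundary contribution is $+\int_{\Gamma_\mu\bs\partial B_\varepsilon(C_\mu)} h\,\tilde{p}_Z(\mu)$, correctly quote its limit from \thref{lem:RealAnalCycleIntegral} as $-C_{n,\kappa}\,\delta_{C_\mu}(h)$ --- and then assert that the boundary term contributes $+C_{n,\kappa}\,\delta_{C_\mu}(h)$. As written, your chain of equalities proves $\xi_{-\kappa}[\Omega_\mu^{\cusp}](h) = [\omega_\mu^{\cusp}](h) - C_{n,\kappa}\,\delta_{C_\mu}(h)$, i.e.\ the wrong sign. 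The resolution is that your extra orientation reversal is spurious: in \thref{lem:RealAnalCycleIntegral}, as it is used in the paper's proof of the current equation, $\partial B_\varepsilon(C_\mu)$ carries the orientation induced as boundary of the complement $\IH_n\setminus B_\varepsilon(C_\mu)$, which is exactly the Stokes orientation of your region. With that convention the boundary contribution is $-\int_{\Gamma_\mu\bs\partial B_\varepsilon(C_\mu)} h\,\tilde{p}_Z(\mu) \to +C_{n,\kappa}\,\delta_{C_\mu}(h)$, and the claimed identity follows. Once this orientation convention is fixed consistently, the rest of your argument (the unfolding, the integrability justifying the truncation limits, and the cuspidal decay removing contributions from the cusps) is in order and matches the paper.
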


\begin{proof}
	By linearity it suffices to prove this for $\Omega_\mu^{\cusp}$ with $\mu = b_1$. In this case we have
$$[\xi_{-\kappa} \Omega_\mu^{\cusp}] = [\omega_\mu^{\cusp}]$$
and we will calculate
$$\int_{\Gamma \bs \IH_n} h(Z) \overline{\omega_\mu^{\cusp}} q(Y)^\kappa d \mu(Z).$$
We unfold the integral against $\omega_\mu^{\cusp}$ and use \thref{thm:DifferentialEq} to obtain
$$\int_{\Gamma_\mu \bs \IH_n} h(Z) \overline{*}_\kappa \frac{1}{(\mu, \psi(Z))^\kappa} = \int_{\Gamma_\mu \bs \IH_n} h(Z) \overline{*}_\kappa \xi_{-\kappa} \tilde{p}_Z(\mu).$$
We would like to apply Stokes in the form of  \thref{lem:Stokes}, but $\tilde{p}_Z(\mu)$ is not smooth because of its singularity along $C_\mu$. Instead, we first cut out the neighborhood $B_\varepsilon(C_\mu)$, apply \thref{lem:Stokes} and then take the limit. This yields
\begin{align*}
&\lim_{\varepsilon \to 0} \int_{\Gamma_\mu \bs (\IH_n \setminus B_\varepsilon(C_\mu))} h(Z) \overline{*}_\kappa \xi_{-\kappa} \tilde{p}_Z(\mu) \\
&= - \lim_{\varepsilon \to 0} \int_{\Gamma_\mu \bs (\IH_n \setminus B_\varepsilon(C_\mu))} \xi_\kappa h(Z) \wedge \overline{*}_{-\kappa} \tilde{p}_Z(\mu) + \lim_{\varepsilon \to 0} \int_{\Gamma_\mu \bs \partial B_\varepsilon(C_\mu)} h(Z) \tilde{p}_Z(\mu).
\end{align*}
The first summand is equal to $\xi_{-\kappa}[\Omega_\mu^{\cusp}](h)$ by reversing the unfolding, while by \thref{lem:RealAnalCycleIntegral} the second summand is given by
$-C_{n,\kappa} \delta_{C(\Omega_\mu^{\cusp})}(h)$.
\end{proof}

\subsection{The Fourier Expansion of the Oda Lift}\label{sec:OdaLift}

As a corollary, we sketch how to recover the Fourier expansion of the Oda lift, compare \cite[Corollary 8.13]{HilbertPaper}, \cite[Lemma 8.7]{CrawfordFunke}. For simplicity we assume that $L$ is unimodular, i.e. $L' / L = \{ 0\}$, but the general case follows similarly, if one works with the Weil representation. We will write $e(x) = e^{2 \pi i x}$. Let $\Gamma(L) = \ker(O^+(L) \to O(L' / L))$ be the discriminant kernel and let $S_{\kappa}(\Gamma(L))$ be the space of holomorphic cusp forms of weight $\kappa$ with respect to $\Gamma(L)$, so that $\omega_\mu^{\cusp} \in S_\kappa(\Gamma(L))$ for all $\mu \in V$ with $q(\mu) > 0$. Following \cite[(5.12)]{Oda}, we consider for $Z \in \IH_n$ and $\tau \in \IH$ the generating series
\begin{align}
	\Omega(Z, \tau) = \sum_{\substack{\mu \in L' \\ q(\mu) > 0}} q(\mu)^{\kappa - \frac{n}{2}} \overline{\omega_\mu^{\cusp}(Z)} e(q(\mu) \tau). \label{eq:OmegaFourierExp}
\end{align}
If $S_{k}(\SL_2(\IZ))$ denotes the space of holomorphic cusp forms of weight $k = \kappa - \frac{n}{2} + 1$ with respect to $\SL_2(\IZ)$, then $\Omega(Z, \cdot) \in S_k(\SL_2(\IZ))$ with respect to the variable $\tau \in \IH$. The \emph{Oda lift} is then defined by
$$\Phi^{\Oda} : S_\kappa(\Gamma(L)) \to S_{k, L}(\SL_2(\IZ)), \qquad f \mapsto \int_{\Gamma(L) \bs \IH_n} f(Z) \Omega(Z, \tau) q(Y)^\kappa d \mu(Z).$$

\begin{cor}\thlabel{cor:OdaLift}
	If $f \in S_\kappa(\Gamma(L))$ is a cusp form, then the $m$th Fourier coefficient of $\Phi^{\Oda}(f)$ is given by
	$$-C_{n, \kappa} m^{\kappa - \frac{n}{2}} \sum_{\substack{\mu \in L' \\ q(\mu) = m}} \delta_{C_\mu}(f).$$
\end{cor}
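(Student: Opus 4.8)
The plan is to expand $\Phi^{\Oda}(f)$ using the $\tau$-Fourier expansion of the generating series $\Omega(Z,\tau)$ from \eqref{eq:OmegaFourierExp}, and then to identify each resulting $Z$-integral by means of the current equation \thref{thm:CurrentEq}. Concretely, after interchanging the (absolutely convergent) sum over $\mu$ with the integral over $\Gamma(L)\bs\IH_n$, the coefficient of $e(m\tau)$ in $\Phi^{\Oda}(f)$ equals $m^{\kappa-\frac n2}\sum_{q(\mu)=m}\int_{\Gamma(L)\bs\IH_n}f(Z)\overline{\omega_\mu^{\cusp}(Z)}q(Y)^\kappa\,d\mu(Z)$. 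Since for a section $h$ of $\calL_\kappa$ one has $[\omega_\mu^{\cusp}](h)=\int_{\Gamma(L)\bs\IH_n}h(Z)\wedge\overline{*}_\kappa\omega_\mu^{\cusp}(Z)=\int_{\Gamma(L)\bs\IH_n}h(Z)\overline{\omega_\mu^{\cusp}(Z)}q(Y)^\kappa\,d\mu(Z)$, this coefficient is $m^{\kappa-\frac n2}\sum_{q(\mu)=m}[\omega_\mu^{\cusp}](f)$, so everything reduces to the pointwise identity $[\omega_\mu^{\cusp}](f)=-C_{n,\kappa}\,\delta_{C_\mu}(f)$ for each $\mu\in L'$ with $q(\mu)=m$.

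To prove this identity I would apply \thref{thm:CurrentEq} with $\Omega^{\cusp}=\Omega_\mu^{\cusp}$, so that $\xi_{-\kappa}\Omega_\mu^{\cusp}=\omega_\mu^{\cusp}$ and $C(\Omega_\mu^{\cusp})=\Gamma C_\mu$ by \thref{thm:LocallyHarmonicMF}, and evaluate the resulting identity of currents on the test form $h=f$. A holomorphic cusp form is rapidly decreasing, hence an admissible argument; moreover, being holomorphic it satisfies $\overline\partial f=0$ and therefore $\xi_\kappa f=\overline{*}_\kappa\overline\partial f=0$. Hence $(\xi_{-\kappa}[\Omega_\mu^{\cusp}])(f)=-[\Omega_\mu^{\cusp}](\xi_\kappa f)=0$, while \thref{thm:CurrentEq} gives $\xi_{-\kappa}[\Omega_\mu^{\cusp}]=[\omega_\mu^{\cusp}]+C_{n,\kappa}\,\delta_{C_\mu}$. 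Evaluating the latter at $f$ and comparing with the former yields $0=[\omega_\mu^{\cusp}](f)+C_{n,\kappa}\,\delta_{C_\mu}(f)$, which is exactly the desired identity. Substituting it into the expression for the coefficient of $e(m\tau)$ gives $-C_{n,\kappa}\,m^{\kappa-\frac n2}\sum_{q(\mu)=m}\delta_{C_\mu}(f)$, as claimed.

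The analytic heart of the argument is therefore just this one-step specialization of the current equation to a holomorphic $f$, where the $\xi_\kappa$-term on the spectral side dies. The points that really require care are of a bookkeeping nature: justifying the termwise integration of $f(Z)q(Y)^\kappa$ against the theta-type kernel $\Omega(Z,\tau)$, and the rearrangement of the sum over vectors $\mu$ of fixed norm $m$. Here one uses that $f$ and all $\omega_\mu^{\cusp}$ are cusp forms, hence decay rapidly on $\Gamma(L)\bs\IH_n$, so that after grouping the $\mu$ with $q(\mu)=m$ into the finitely many $\Gamma(L)$-orbits that occur, all series converge absolutely and the interchanges are legitimate; this grouping is precisely what makes the sum $\sum_{q(\mu)=m}$ meaningful. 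I expect this bookkeeping, together with keeping track of the powers of $q(\mu)=m$ carried by the normalizing factor $q(\mu)^{\kappa-\frac n2}$ in $\Omega$ and by the definition of $\delta_{C_\mu}$, to be the main but mild obstacle; there is no genuine analytic difficulty beyond what is already contained in \thref{thm:CurrentEq}.
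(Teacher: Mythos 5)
Your proposal is correct and follows essentially the paper's own route: the analytic core in both is to evaluate the current equation of \thref{thm:CurrentEq} on the holomorphic cusp form $f$, where $\xi_\kappa f=\overline{*}_\kappa\overline{\partial}f=0$ kills the term $-[\Omega_\mu^{\cusp}](\xi_\kappa f)$ and leaves $[\omega_\mu^{\cusp}](f)=-C_{n,\kappa}\delta_{C_\mu}(f)$. The only (harmless) difference is bookkeeping at the start: the paper extracts the $m$th Fourier coefficient by pairing $\Phi^{\Oda}(f)$ with a holomorphic Poincar\'e series $P_m$ and interchanging the two integrals, whereas you read it off directly from the expansion \eqref{eq:OmegaFourierExp} after interchanging the $\mu$-sum with the $Z$-integral; both reduce to the same identity $m^{\kappa-\frac{n}{2}}\sum_{q(\mu)=m}[\xi_{-\kappa}\Omega_\mu^{\cusp}](f)$.
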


\begin{proof}[Proof (Sketch)]
	The $m$th Fourier coefficient is equal to the Petersson inner product
	$$\int_{\SL_2(\IZ) \bs \IH} \Phi^{\Oda}(\tau) \overline{P_m(\tau)} \Im(\tau)^k d \mu(\tau),$$
	where $P_m$ is the (correctly normalized) holomorphic Poincar\'e series of weight $k$ and index $m$. By interchanging the two integrals, one ends up with
	$$\int_{\Gamma(L) \bs \IH_n} f(Z) \int_{\SL_2(\IZ) \bs \IH} \Omega(Z, \tau) \overline{P_m(\tau)} \Im(\tau)^k d \mu(\tau) q(Y)^\kappa d \mu(Z).$$
	The inner integral picks out the $m$th Fourier coefficient of $\Omega(Z, \tau)$, which is by \eqref{eq:OmegaFourierExp} equal to
	$$m^{\kappa - \frac{n}{2}} \sum_{\substack{\mu \in L' \\ q(\mu) = m}} \overline{\omega_\mu^{\cusp}(Z)}.$$
	Therefore, the $m$th Fourier coefficient of $\Phi^{\Oda}(\tau)$ is given by
	$$m^{\kappa - \frac{n}{2}} \sum_{\substack{\mu \in L' \\ q(\mu) = m}} \int_{\Gamma(L) \bs \IH_n} f(Z) \overline{\omega_\mu^{\cusp}(Z)} q(Y)^\kappa d \mu(Z) = m^{\kappa - \frac{n}{2}} \sum_{\substack{\mu \in L' \\ q(\mu) = m}} [\xi_{-\kappa} \Omega_\mu^{\cusp}](f).$$
	\thref{thm:CurrentEq} now yields the result, since $f$ is holomorphic.
\end{proof}

\section{Polar Harmonic Maass Forms}\label{sec:PolarHarmonicMF}

In this section we introduce polar harmonic Maass forms and show that their images under the $\xi_{-\kappa}$ operator are orthogonal to cusp forms. They are generalizations of \cite{BringmannPolar} (signature $(2, 1)$) and \cite[Section 5]{HilbertPaper} (signature $(2, 2)$) to arbitrary signature $(2, n)$.

\subsection{Polar Harmonic Maass Forms}

We start with the definition of polar harmonic Maass forms.

\begin{defn}
	Let $\nu \in V$ with $q(\nu) < 0$, $\kappa \in \IN$ and $\Gamma \subseteq O^+(L)$ a neat congruence subgroup. We define
	$$\Omega_{\nu, -\kappa, \Gamma}^{\mero}(Z) = \sum_{\gamma \in \Gamma_\nu \bs \Gamma} \tilde{p}_Z(\nu) \vert_{-\kappa} \gamma = \sum_{\lambda \in \Gamma \nu} \tilde{p}_Z(\lambda).$$
	As before, we will only write $\Omega_\nu^{\mero}$, if the weight $\kappa$ and the neat congruence subgroup $\Gamma$ is clear from the context. We denote the space generated by $\Omega_\nu^{\mero}, \nu \in V, q(\nu) < 0$ by $H_{-\kappa}^{\pol}(\Gamma)$ and call elements in $H_{-\kappa}^{\pol}(\Gamma)$ \emph{polar harmonic Maass forms}. The properties of \thref{lem:omegaProperties} are obviously also satisfied for $\Omega_\nu^{\mero}$.
\end{defn}

\begin{thm}\thlabel{thm:PolarHarmonicMF}
	\begin{enumerate}
		\item $\Omega_\nu^{\mero}$ converges absolutely and locally uniformly away from its singularities along the $\Gamma$-translates of $T_\nu$ and defines an $(n, n-1)$-form with values in $\calL_{-\kappa}$.
		\item We have $\xi_{-\kappa} \Omega_\nu^{\mero} = \omega_\nu^{\mero}$ away from the singularities. In particular, $\xi_{-\kappa} H_{-\kappa}^{\pol}(\Gamma)$ is the space generated by $\omega_\nu^{\mero}$ for $\nu \in V, q(\nu) < 0$ and $\Omega_\nu^{\mero}$ is harmonic outside the singularities.
	\end{enumerate}
\end{thm}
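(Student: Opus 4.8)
The plan is to follow the proof of \thref{thm:LocallyHarmonicMF} essentially verbatim, replacing the positive-norm vector $\mu$ by the negative-norm vector $\nu$ and using the representation of $\tilde{p}_Z(\nu)$ from \thref{rem:pRewriting}, which is adapted to the case $q(\nu) < 0$, in place of the one from \thref{def:pForm}.

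For part (i) I would first locate the singularities. Since $q(\nu) < 0$ one always has $q(\nu_{Z^-}) \leq q(\nu) < 0$, so $q(\nu_{Z^-})$ is bounded away from zero on all of $\IH_n$ and the ratio $q(\nu)/q(\nu_{Z^-})$ lies in $(0,1]$, attaining the value $1$ exactly on $T_\nu$; hence on any compact set the hypergeometric factor $F(1 - \frac{n}{2}, 1; \kappa - \frac{n}{2} + 1; q(\nu)/q(\nu_{Z^-}))$ appearing in \thref{rem:pRewriting} is bounded. Consequently the only singular factor of $\tilde{p}_Z(\nu)$ is $(\nu,\psi(\overline{Z}))^{1-\kappa}$, which blows up precisely where $(\nu,\psi(\overline{Z})) = 0$; because $q(\nu_{Z^+}) = \lvert(\nu,\psi(Z))\rvert^2/(4q(Y))$, this happens exactly for $Z \in T_\nu$. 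Thus $\tilde{p}_Z(\nu)$ is real analytic off $T_\nu$, and $\Omega_\nu^{\mero}$ is real analytic off the $\Gamma$-translates of $T_\nu$. Convergence of the series on compacta disjoint from $\bigcup_{\lambda \in \Gamma\nu} T_\lambda$ then follows as in \thref{thm:LocallyHarmonicMF}(i): one reduces, via term-by-term comparison with $\omega_\nu^{\mero}$, to a uniform bound on $\lvert(q(\lambda_{Z^+})/q(\lambda_{Z^-}))^{n/2}\, p_Z(\lambda)/(\lambda,\psi(Z))\rvert$ for $\lambda \in \Gamma\nu$ and $Z$ in the compact set, and this bound rests on the finiteness of the set $\{\lambda \in L' : q(\lambda) = q(\nu),\ \exists\, Z \in K \text{ with } q(\lambda_{Z^+}) < \varepsilon\}$ for $K$ compact. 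Finally, \thref{rem:ModularityTildeP} shows that $\tilde{p}_Z(\nu)$, and hence $\Omega_\nu^{\mero}$, defines an $(n,n-1)$-form with values in $\calL_{-\kappa}$.

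For part (ii) I would apply \thref{thm:DifferentialEq}, which gives $\xi_{-\kappa}\tilde{p}_Z(\lambda) = (\lambda,\psi(Z))^{-\kappa}$ for every $\lambda$ with $q(\lambda) \neq 0$; differentiating termwise, which is justified by the local uniform convergence established in (i), yields $\xi_{-\kappa}\Omega_\nu^{\mero} = \omega_\nu^{\mero}$ away from the singularities, and the description of $\xi_{-\kappa}H_{-\kappa}^{\pol}(\Gamma)$ follows at once. Harmonicity is obtained exactly as in \thref{thm:LocallyHarmonicMF}(ii): off the $\Gamma$-translates of $T_\nu$ the function $\omega_\nu^{\mero}$ is holomorphic, so $\xi_\kappa \omega_\nu^{\mero} = \overline{*}_\kappa \overline{\partial} \omega_\nu^{\mero} = 0$, and since $\Delta_{-\kappa} = \xi_\kappa \xi_{-\kappa}$ on $(n,n-1)$-forms we get $\Delta_{-\kappa}\Omega_\nu^{\mero} = \xi_\kappa \omega_\nu^{\mero} = 0$ there.

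The main obstacle is, as in the locally harmonic case, the behaviour near the singular cycles: one has to check that the family $\{T_\lambda : \lambda \in \Gamma\nu\}$ is locally finite in $\IH_n$, so that on a compact set avoiding the cycles only finitely many summands fail to be uniformly small, and that the factor comparing $\tilde{p}_Z(\lambda)$ with $(\lambda,\psi(Z))^{-\kappa}$ is uniformly bounded there. Both are consequences of the discreteness of $\Gamma$ via the finiteness statement above. If anything, the situation is slightly cleaner than for $\Omega_\mu^{\cusp}$, since $q(\nu_{Z^-})$ never vanishes, so I do not expect any genuinely new difficulty.
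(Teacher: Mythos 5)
Your proposal is correct and follows essentially the same route as the paper's proof: convergence as in \thref{thm:LocallyHarmonicMF} via the representation of \thref{rem:pRewriting}, the observation that $q(\nu_{Z^-})$ never vanishes so the only singularity comes from $(\nu,\psi(\overline{Z}))^{1-\kappa}$ along the $\Gamma$-translates of $T_\nu$, modularity from \thref{rem:ModularityTildeP}, and part (ii) from \thref{thm:DifferentialEq} together with $\Delta_{-\kappa}=\xi_\kappa\xi_{-\kappa}$ on $(n,n-1)$-forms and the holomorphy of $\omega_\nu^{\mero}$ off the singularities. Your added remarks on the boundedness of the hypergeometric factor and the local finiteness of the cycles only flesh out details the paper leaves implicit.
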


\begin{proof}
	(i) The convergence can be seen as in \thref{thm:LocallyHarmonicMF} using \thref{rem:pRewriting}. For the singularities first observe that $q(\nu_{Z^-}) \neq 0$, since $q(\nu) < 0$ so that the only possible singularities come from the $\Gamma$-translates of $q(\nu_{Z^+}) = 0$, i.e. are along the $\Gamma$-translates of $T_\nu$. Using the representation of \thref{rem:pRewriting}
		\begin{align*}
			\tilde{p}_Z(\nu) &= \frac{(\nu, \psi(\overline{Z}))^{1-\kappa}}{(-1)^{\frac{n}{2} - 1} 4^\kappa (\kappa - \frac{n}{2}) q(Y)^{1-\kappa}} q(\nu_{Z^-})^{\frac{n}{2} - 1} \\
			&\times p_Z(\nu)F\left(1 - \frac{n}{2}, 1; \kappa - \frac{n}{2} + 1; \frac{q(\nu)}{q(\nu_{Z^-})}\right),
		\end{align*}
		we see that $\Omega_\nu^{\mero}$ has in fact a singularity along the $\Gamma$-translates of $T_\nu$ coming from the $\Gamma$-translates of $(\nu, \psi(\overline{Z}))^{1-\kappa}$. That this defines an $(n, n-1)$-form with values in $\calL_{-\kappa}$ follows from \thref{rem:ModularityTildeP}.
	
	(ii) The first two assertions follow from \thref{thm:DifferentialEq}. Since $\omega_\mu^{\mero}$ is holomorphic outside the singularities and $\Delta_{-\kappa} = \xi_\kappa \xi_{-\kappa}$ on $(n,n-1)$-forms, we obtain that $\Omega_\nu^{\mero}$ is harmonic outside the singularities.
\end{proof}

\begin{rem}\thlabel{rem:PolarHarmonicMaassForms}
	As in \thref{rem:LocallyHarmonicMaassForm}, we could slightly generalize the notion of polar harmonic Maass forms as a harmonic $(n, n-1)$-form with values in $\calL_{-\kappa}$ of moderate growth that has the same type of singularity as a linear combination of $\Omega_\nu^{\mero}$ for $\nu \in V, q(\nu) < 0$. Moreover, in contrast to the more general definition of \cite{BringmannPolar}, we only allow singularities along the special divisors $T_\nu$.
\end{rem}

\subsection{Regularized Inner Products and an Orthogonality Result}

Let $\nu = b_n$, so that $q(\nu) < 0$. Then
$$T_\nu = \{ (Z', z_n) \in \IH_n\ \vert\ z_n = 0 \}.$$
Let $0 < \varepsilon < 1$. The subset
$$B_\varepsilon(T_\nu) = \{ (Z', z_n) \in \IH_n\ \vert\ \lvert z_n \rvert^2 < \varepsilon^2 q(Y') \}$$
is an open neighborhood of $T_\nu$. For general $\nu \in V$ with $q(\nu) < 0$ we choose $\gamma \in G$ with $\nu = \sqrt{q(\nu)} \gamma b_n$. Then $T_\nu = \gamma T_{b_n}$ and we define $B_\varepsilon(T_\nu) = \gamma B_\varepsilon(T_{b_n})$.

\begin{defn}\thlabel{def:RegularizedInnerProduct}
	We define the regularized inner product of a smooth cusp form $h : \IH \to \IC$ of weight $\kappa$ with $\omega_\nu^{\mero}$ by
	\begin{align*}
		\langle h, \omega_\nu^{\mero} \rangle^{\reg}
		&= \lim_{\varepsilon \to 0} \int_{\Gamma \bs (\IH_n \setminus B_\varepsilon(T_\nu))} h(Z) \wedge \overline{*}_\kappa \omega_\nu^{\mero}(Z) \\
		&= \lim_{\varepsilon \to 0} \int_{\Gamma \bs (\IH_n \setminus B_\varepsilon(T_\nu))} h(Z) \overline{\omega_\nu^{\mero}(Z)} q(Y)^\kappa d \mu(Z).
	\end{align*}
	This extends linearly to a regularized inner product between smooth cusp forms $h : \IH \to \IC$ and elements in the span of $\omega_\nu^{\mero}$ for $\nu \in V$ with $q(\nu) < 0$.
\end{defn}

\begin{lem}\thlabel{ref:AntiHolResidueThm}
	Let $H \in \calA^{n-1, n}(\Gamma_\nu \bs \IH_n, \calL_{-\kappa})$ be real analytic and cuspidal. Then for $\kappa \in \IN$ we have
	$$\lim_{\varepsilon \to 0} \int_{\Gamma_\nu \bs \partial B_\varepsilon(T_\nu)} \frac{H(Z)}{(\nu, \psi(Z))^\kappa} = 0.$$
\end{lem}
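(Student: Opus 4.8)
The plan is to reduce to the model case $\nu=b_n$ and then recognize the boundary integral as a residue-type integral along the complex cycle $T_\nu$, which vanishes in the limit because the winding numbers forced by the holomorphic pole of $(\nu,\psi(Z))^{-\kappa}$ and by the antiholomorphic content of the $(n-1,n)$-form $H$ cannot match up. This runs parallel to the proof of \thref{lem:RealAnalCycleIntegral}, except that the angular integration now produces $0$ instead of a cycle integral.

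First I would use that, exactly as in \thref{lem:RealAnalCycleIntegral}, the quantity is homogeneous in $\nu$ and invariant under $G$-equivariant change of variables: if $\nu=\sqrt{q(\nu)}\,\gamma b_n$ with $\gamma\in G$, then $H\vert_{-\kappa}\gamma$ is again real analytic and cuspidal, $(\nu,\psi(Z))$ transforms by \eqref{eq:qZModularity}, and $B_\varepsilon(T_\nu)=\gamma B_\varepsilon(T_{b_n})$, so we may assume $\nu=b_n$. Then $(\nu,\psi(Z))=(b_n,\psi(Z))=-2z_n$, the cycle is $T_\nu=\{z_n=0\}\cong\IH_{n-1}$, and $\partial B_\varepsilon(T_\nu)=\{\,|z_n|^2=\varepsilon^2 q(Y')\,\}$ is a circle bundle over $T_\nu$, which I parametrize by $(Z',\theta)\mapsto(Z',\varepsilon\sqrt{q(Y')}\,e^{i\theta})$ with $Z'\in T_\nu$ and $\theta\in[0,2\pi)$; here $Y'=\Im Z'$ and $q(Y')$ depends on $Z'$ only.

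Next I would pull $H/(\nu,\psi(Z))^\kappa$ back to $\partial B_\varepsilon(T_\nu)$. Writing $H=\sum_{i=1}^n f_i\,\widehat{dz_i}$ with $f_i$ real analytic and using the pullback computations of $\widehat{dz_j}$ and $d\overline{z}_j$ from the proof of \thref{lem:RealAnalCycleIntegral}, one finds that $dz_n$ and $d\overline{z}_n$ restrict to $1$-forms of size $O(\varepsilon)$ carrying the single Fourier modes $e^{i\theta}$ and $e^{-i\theta}$ in $\theta$. Hence $f_n\,\widehat{dz_n}$, which contains $d\overline{z}_n$ but not $dz_n$, restricts to a form of order $\varepsilon$ carrying an $e^{-i\theta}$ from the $d\overline{z}_n$-factor, whereas each $f_i\,\widehat{dz_i}$ with $i<n$ contains $dz_n\wedge d\overline{z}_n$ and so restricts to a form of order $\varepsilon^2$ with no net power of $e^{i\theta}$ from the differential part. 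On the other side, $(\nu,\psi(Z))^{-\kappa}=(-2)^{-\kappa}\varepsilon^{-\kappa}q(Y')^{-\kappa/2}\,e^{-i\kappa\theta}$ contributes the factor $\varepsilon^{-\kappa}$ and the mode $e^{-i\kappa\theta}$. Now comes the key step, the integration over $\theta\in[0,2\pi)$: Taylor expanding $f_i(Z',\varepsilon\sqrt{q(Y')}\,e^{i\theta})$ in $z_n$ and $\overline{z}_n$ about $z_n=0$, each monomial $z_n^a\overline{z}_n^b$ contributes a factor $\varepsilon^{a+b}$ and the mode $e^{i(a-b)\theta}$, so every extra power of $\varepsilon$ is accompanied by a net mode of at most that size. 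Integrating over $\theta$ annihilates all Fourier modes except the constant one: for the $i=n$ part this forces $(a-b)-1-\kappa=0$, hence $a+b\ge\kappa+1$, and together with the $\varepsilon^{1-\kappa}$ already present the surviving contribution is $O(\varepsilon^2)$; for the $i<n$ parts it forces $(a-b)-\kappa=0$, hence $a+b\ge\kappa$, and together with the $\varepsilon^{2-\kappa}$ already present the surviving contribution is again $O(\varepsilon^2)$. Thus
$$\int_{\Gamma_\nu\bs\partial B_\varepsilon(T_\nu)}\frac{H(Z)}{(\nu,\psi(Z))^\kappa}=O(\varepsilon^2),$$
and the limit vanishes.

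The one point requiring care, which I regard as the main obstacle, is the remaining integral over $Z'\in\Gamma_\nu\bs T_\nu$, a generally non-compact quotient of $\IH_{n-1}$: one must check that the surviving integrands (the $z_n$-derivatives of the $f_i$ restricted to $T_\nu$, weighted by the powers of $q(Y')$ coming from the parametrization and from the Hodge $\overline{*}$-operator) are absolutely integrable and that the $O(\varepsilon^2)$ estimate holds uniformly for $0<\varepsilon<1$. This follows from the cuspidality of $H$: its Fourier expansion at the cusps of $\Gamma_\nu\bs T_\nu$ gives rapid decay dominating the polynomial growth of the weight factors. Conceptually the vanishing is a residue phenomenon: $(\nu,\psi(Z))$ is holomorphic with a pole only in the transverse holomorphic coordinate $z_n$, while the $(n-1,n)$-form $H$ necessarily carries the full antiholomorphic volume $d\overline{z}_1\wedge\dots\wedge d\overline{z}_n$, so the angular integral has no residue to pick up.
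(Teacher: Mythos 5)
Your proof is correct and takes essentially the same route as the paper: reduce to $\nu=b_n$, regard $\partial B_\varepsilon(T_\nu)$ as a circle bundle over $T_\nu$, and show that the fiber integral against $(\nu,\psi(Z))^{-\kappa}$ vanishes as $\varepsilon\to 0$. The only difference is that you prove the key vanishing of the inner circle integral directly by the Taylor/Fourier-mode count giving $O(\varepsilon^2)$ (the paper instead cites \cite[Lemma 8.2]{HilbertPaper}), and you additionally make explicit the treatment of the components with $i<n$ and the use of cuspidality to control the non-compact base $\Gamma_\nu\bs T_\nu$, which the paper leaves implicit.
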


\begin{proof}
	It suffices to consider $\nu = b_n$. Locally, $H$ is given by
	$$\sum_{j = 1}^n H_j(Z) \widehat{d z_j}.$$
	The only term that plays a role when integrating is $H_n(Z) \widehat{d z_n}$ and we end up with integrals of the form
	$$\lim_{\varepsilon \to 0} \int_{U} \int_{\varepsilon \sqrt{q(Y)} \partial \ID} \frac{H_n(Z', z_n)}{(2 z_n)^\kappa} \widehat{d z_n \wedge d \overline{z}_n} \wedge d \overline{z}_n.$$ The inner integral vanishes as $\varepsilon \to 0$ by \cite[Lemma 8.2]{HilbertPaper}.
\end{proof}

\begin{thm}\thlabel{thm:OrthogonalityOnCF}
	For every real analytic cusp form $h : \IH_n \to \IC$ of weight $\kappa$ and $\omega^{\mero}$ in the span of $\omega_\nu^{\mero}, \nu \in V, q(\nu) < 0$, we have
	$$\langle h, \omega^{\mero} \rangle^{\reg} = 0.$$
\end{thm}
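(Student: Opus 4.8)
The plan is to combine the Stokes identity of \thref{lem:Stokes} with the anti-holomorphic residue vanishing of \thref{ref:AntiHolResidueThm}, in the spirit of the proof of the current equation \thref{thm:CurrentEq}. By linearity it suffices to prove $\langle h, \omega_\nu^{\mero} \rangle^{\reg} = 0$ for a single $\nu \in V$ with $q(\nu) < 0$, and by \thref{lem:omegaProperties}(iii) applied to a suitable $\gamma \in O^+(V(\IR))$ (which transforms $\Gamma$, $h$ and the neighbourhood $B_\varepsilon(T_\nu)$ compatibly, and rescales $\omega_\nu^{\mero}$ by \thref{lem:omegaProperties}(i)) I may assume $\nu = b_n$, so that $T_\nu = \{ z_n = 0 \}$ and $B_\varepsilon(T_\nu)$ is the neighbourhood fixed in Section \ref{sec:PolarHarmonicMF}.

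Set $U_\varepsilon = \Gamma \bs (\IH_n \setminus B_\varepsilon(T_\nu))$. On $U_\varepsilon$ both $\omega_\nu^{\mero}$ and its $\xi_{-\kappa}$-preimage $\Omega_\nu^{\mero}$ from \thref{thm:PolarHarmonicMF} are smooth, since all of their singularities lie along the removed $\Gamma$-translates of $T_\nu$. Applying \thref{lem:Stokes} with $h_1 = h$, $h_2 = \omega_\nu^{\mero} = \xi_{-\kappa} \Omega_\nu^{\mero}$ and $H_2 = \Omega_\nu^{\mero}$ yields
$$\int_{U_\varepsilon} h(Z)\, \overline{\omega_\nu^{\mero}(Z)}\, q(Y)^\kappa\, d\mu(Z) = -\int_{U_\varepsilon} \xi_\kappa h(Z) \wedge \overline{*}_{-\kappa} \Omega_\nu^{\mero}(Z) + \int_{\partial U_\varepsilon} h(Z)\, \Omega_\nu^{\mero}(Z).$$
Since $h$ is holomorphic, $\xi_\kappa h = \overline{*}_\kappa \overline{\partial} h = 0$, so the bulk integral vanishes; and $\partial U_\varepsilon$ consists of the tube boundary $\Gamma \bs \partial B_\varepsilon(T_\nu)$ together with the boundary at the cusps, the latter contributing nothing by the cuspidality of $h$ and $\Omega_\nu^{\mero}$. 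Letting $\varepsilon \to 0$, the theorem reduces to showing $\lim_{\varepsilon \to 0} \int_{\Gamma \bs \partial B_\varepsilon(T_\nu)} h(Z)\, \Omega_\nu^{\mero}(Z) = 0$.

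To evaluate this limit I would unfold $\Omega_\nu^{\mero}(Z) = \sum_{\lambda \in \Gamma\nu} \tilde{p}_Z(\lambda)$: in a fixed neighbourhood of $T_\nu$ only the summand $\lambda = \nu$ is singular, while the remainder converges locally uniformly and hence is bounded there, so its boundary contribution vanishes in the limit because $\partial B_\varepsilon(T_\nu)$ is an $S^1$-bundle over $T_\nu$ with fibres of radius $O(\varepsilon)$. There remains $\lim_{\varepsilon \to 0} \int_{\Gamma_\nu \bs \partial B_\varepsilon(T_\nu)} h(Z)\, \tilde{p}_Z(\nu)$, for which I would insert the representation of $\tilde{p}_Z(\nu)$ valid for $q(\nu) < 0$ from \thref{rem:pRewriting}: near $T_\nu$ the only singular factor is $(\nu, \psi(\overline{Z}))^{1-\kappa}$, the factors $q(Y)^{1-\kappa}$, $q(\nu_{Z^-})^{\frac{n}{2}-1}$ and $p_Z(\nu)$ being real analytic with $q(\nu_{Z^-}) \neq 0$. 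Since the argument $q(\nu)/q(\nu_{Z^-})$ of the hypergeometric factor tends to $1$ along $T_\nu$, I would expand $F$ about $1$ by \cite[15.8.10]{NIST} --- here $\kappa - \frac{n}{2} + 1 - (1 - \frac{n}{2}) - 1 = \kappa - 1 \in \IN$, so $F$ is a polynomial of degree $\kappa - 2$ in $1 - q(\nu)/q(\nu_{Z^-})$ plus a term of order $(1 - q(\nu)/q(\nu_{Z^-}))^{\kappa - 1}$ carrying a logarithm --- and use $1 - q(\nu)/q(\nu_{Z^-}) = -q(\nu_{Z^+})/q(\nu_{Z^-})$ together with $q(\nu_{Z^+}) = \frac{(\nu, \psi(Z))(\nu, \psi(\overline{Z}))}{4 q(Y)}$ to conclude that, near $T_\nu$, $h(Z)\, \tilde{p}_Z(\nu)$ is a finite sum of terms $G_j(Z)/(\nu, \psi(\overline{Z}))^\kappa$ with $G_j$ real analytic and cuspidal, plus a remainder that is real analytic across $T_\nu$ (up to an integrable logarithm). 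The complex conjugate of \thref{ref:AntiHolResidueThm} makes each term integral tend to $0$, and the real analytic remainder integrates to $0$ over the shrinking $\partial B_\varepsilon(T_\nu)$; this finishes the argument.

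I expect the last step to be the main obstacle: teasing out the precise singular expansion of $\Omega_\nu^{\mero}$ along $T_\nu$ from the hypergeometric shape of $\tilde{p}_Z(\nu)$, controlling the branch-point behaviour of $F$ at argument $1$ and verifying that the logarithmic contributions are harmless, so that the reduction to \thref{ref:AntiHolResidueThm} is legitimate. The unfolding, the interchange of summation and limit, and the vanishing of the cuspidal boundary are routine given the hypotheses.
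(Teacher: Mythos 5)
Your proposal follows the paper's own route: reduce by linearity, invoke the $\xi_{-\kappa}$-preimage $\Omega_\nu^{\mero}$ from \thref{thm:PolarHarmonicMF}, apply \thref{lem:Stokes} on the complement of the tube $B_\varepsilon(T_\nu)$, unfold the surviving boundary integral to $\tilde{p}_Z(\nu)$, and kill it with the complex conjugate of \thref{ref:AntiHolResidueThm}. That is exactly the paper's argument. Your handling of the boundary term is heavier than necessary: the paper simply writes the integrand as $h(Z)(\nu,\psi(\overline{Z}))^{\kappa-1}\tilde{p}_Z(\nu)$ divided by $(\nu,\psi(\overline{Z}))^{\kappa-1}$ and observes via \thref{rem:pRewriting} that the numerator is real analytic near $T_\nu$ (there $q(\nu_{Z^-})$ stays away from $0$), so no expansion of the hypergeometric function at argument $1$ is needed; your discussion of the behaviour of $F$ near $1$ does, however, touch a fine point that the paper asserts without comment, so it is not wasted care.

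The one substantive issue is your disposal of the bulk term: you write ``since $h$ is holomorphic, $\xi_\kappa h=0$,'' but the theorem only assumes $h$ real analytic, so as written your argument proves the statement only for holomorphic cusp forms; for genuinely non-holomorphic $h$ the integral $-\int_{U_\varepsilon}\xi_\kappa h\wedge\overline{*}_{-\kappa}\Omega_\nu^{\mero}$ has no reason to vanish and is left untreated. To be fair, the paper's own proof records nothing but the boundary term either, i.e.\ it too uses exactly what holomorphy of $h$ would justify, and the theorem is only ever applied to the holomorphic forms $\omega_\mu^{\cusp}$ in the duality argument; but if the claim is to be read literally for all real analytic cuspidal $h$, this bulk term is the gap you (and the paper) would need to close, and your proposal should at least flag it rather than silently strengthen the hypothesis.
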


\begin{proof}
	By linearity it suffices to prove this for $\omega^{\mero} = \omega_\nu^{\mero}$ for $\nu \in V$ with $q(\nu) < 0$.	We use that by \thref{thm:PolarHarmonicMF} we have $\xi_{-\kappa} \Omega_\nu^{\mero} = \omega_\nu^{\mero}$, apply Stokes' theorem in the form of \thref{lem:Stokes} and unfold the integral to obtain
	\begin{align*}
		\langle F, \omega_\nu^{\mero} \rangle^{\reg}
		&= \lim_{\varepsilon \to 0} \int_{\Gamma_\nu \bs \partial B_\varepsilon(T_\nu)} \frac{h(Z) (\nu, \psi(\overline{Z}))^{\kappa - 1} \tilde{p}_Z(\mu)}{(\nu, \psi(\overline{Z}))^{\kappa - 1}}.
	\end{align*}
	Since $h(Z) (\nu, \psi(\overline{Z}))^{\kappa - 1} \tilde{p}_Z(\mu)$ is a real analytic $(n, n-1)$-form we can apply \thref{ref:AntiHolResidueThm} to the complex conjugate to see that this vanishes.
\end{proof}

\subsection{The Duality Theorem}

In this section we prove the duality \thref{thm:DualityTheorem} by combining the results of Section \ref{sec:LocallyHarmonicMF} and Section \ref{sec:PolarHarmonicMF}.

For $\nu = b_n$ and $H \in \calA^{n, n-1}(\Gamma \bs \IH_n, \calL_{-\kappa})$ we define the restriction
$$H \vert_{T_\nu}(Z') = \lim_{\varepsilon \to 0} \int_{\varepsilon \sqrt{q(Y)} \partial \ID} \frac{H(Z', z_n)}{(\nu, \psi(Z', z_n))^{\kappa}} \in \calA^{n-1, n-1}(\Gamma_\nu \bs T_\nu)$$
and the cycle integral
$$\delta_{T_\nu}(H) = \int_{\Gamma_\nu \bs T_\nu} H \vert_{T_\nu}.$$
In particular, we have
$$\lim_{\varepsilon \to 0} \int_{\Gamma_\nu \bs \partial B_{\varepsilon}(T_\nu)} \frac{H(Z)}{(\nu, \psi(Z))^\kappa} = \delta_{T_\nu}(H).$$
These definitions can be extended to arbitrary $\nu \in V$ with $q(\nu) < 0$ choosing $\gamma \in G$ with $\nu = \sqrt{q(\nu)} \gamma b_n$, so that $T_\nu = \gamma T_{b_n}$.

\begin{rem}
	If $H \in \calA^{n,n-1}(\Gamma \bs \IH_n, \calL_{-\kappa})$ is real analytic, then $H \vert_{T_\nu}$ is a higher order term of the power series expansion around the cycle $T_\nu$. We expect that one can calculate them using an extension of the higher pullbacks introduced by \cite{Williams}, compare \cite[Lemma 4.1]{Alfes}.
\end{rem}

We will now prove the main result of this paper.

\begin{thm}\thlabel{thm:DualityTheorem}
Let $\omega^{\mero}$ be in the span of $\omega_\nu^{\mero}$ for $q(\nu) < 0$ and let $\Omega^{\cusp} \in H_{-\kappa}^{\loc}(\Gamma)$ such that the cycles $C(\Omega^{\cusp})$ and $T(\omega^{\mero})$ do not intersect. Then
$$C_{n, \kappa} \delta_{C(\Omega^{\cusp})}(\omega^{\mero}) = \delta_{T(\omega^{\mero})}(\Omega^{\cusp}).$$
\end{thm}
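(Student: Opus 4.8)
The plan is to evaluate the regularized Petersson pairing $\langle\omega_\mu^{\cusp},\omega_\nu^{\mero}\rangle^{\reg}$, which vanishes by \thref{thm:OrthogonalityOnCF} since $\omega_\mu^{\cusp}$ is a holomorphic cusp form, in a second way by Stokes' theorem, now using $\Omega_\mu^{\cusp}$ as a $\xi_{-\kappa}$-preimage of $\omega_\mu^{\cusp}$ (\thref{thm:LocallyHarmonicMF}); the two cycle integrals then appear as the boundary contributions. By bilinearity, together with the homogeneity of both sides of the asserted identity under $\mu\mapsto r\mu$ and $\nu\mapsto r\nu$, it suffices to treat $\omega^{\mero}=\omega_\nu^{\mero}$ and $\Omega^{\cusp}=\Omega_\mu^{\cusp}$ with $C_\mu\cap T_\nu=\emptyset$, so that the claim reads $C_{n,\kappa}\,\delta_{C_\mu}(\omega_\nu^{\mero})=\delta_{T_\nu}(\Omega_\mu^{\cusp})$.

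Concretely, I would argue as follows. Since $C_\mu$ and $T_\nu$ are disjoint, for small $\varepsilon,\delta>0$ the saturated tubular neighborhoods $B_\varepsilon(T_\nu)$ and $B_\delta(C_\mu)$ and their boundaries are disjoint in $\Gamma\bs\IH_n$; put $U_{\varepsilon,\delta}=\Gamma\bs(\IH_n\setminus(B_\varepsilon(T_\nu)\cup B_\delta(C_\mu)))$. On $U_{\varepsilon,\delta}$ the form $\omega_\nu^{\mero}$ is holomorphic and cuspidal and $\Omega_\mu^{\cusp}$ is smooth, so \thref{lem:Stokes} with $h_1=\omega_\nu^{\mero}$, $h_2=\omega_\mu^{\cusp}=\xi_{-\kappa}\Omega_\mu^{\cusp}$ and $H_2=\Omega_\mu^{\cusp}$ applies; the term $\int_{U_{\varepsilon,\delta}}\xi_\kappa\omega_\nu^{\mero}\wedge\overline{*}_{-\kappa}\Omega_\mu^{\cusp}$ vanishes because $\omega_\nu^{\mero}$ is holomorphic there, leaving
$$\int_{U_{\varepsilon,\delta}}\omega_\nu^{\mero}(Z)\,\overline{\omega_\mu^{\cusp}(Z)}\,q(Y)^\kappa\,d\mu(Z)=\int_{\partial U_{\varepsilon,\delta}}\omega_\nu^{\mero}(Z)\,\Omega_\mu^{\cusp}(Z).$$
Letting $\delta\to 0$ and then $\varepsilon\to 0$, the left hand side tends to $\overline{\langle\omega_\mu^{\cusp},\omega_\nu^{\mero}\rangle^{\reg}}=0$; by cuspidality the contribution of $\partial U_{\varepsilon,\delta}$ near the cusps vanishes, so after unfolding $\partial U_{\varepsilon,\delta}$ contributes only $-\int_{\Gamma_\nu\bs\partial B_\varepsilon(T_\nu)}\omega_\nu^{\mero}\Omega_\mu^{\cusp}-\int_{\Gamma_\mu\bs\partial B_\delta(C_\mu)}\omega_\nu^{\mero}\Omega_\mu^{\cusp}$ in the appropriate orientation.

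It then remains to identify the two boundary limits with the cycle currents. Near $C_\mu$ I would split $\Omega_\mu^{\cusp}=\tilde p_Z(\mu)+\sum_{\lambda\in\Gamma\mu,\,C_\lambda\neq C_\mu}\tilde p_Z(\lambda)$; the second sum is smooth near $C_\mu$ and its boundary integral vanishes as $\delta\to 0$ (by a dimension count as in the proof of \thref{lem:RealAnalCycleIntegral}, resp. cancellation of the two sheets of $\partial B_\delta(C_\mu)$), while \thref{lem:RealAnalCycleIntegral} applied to $\omega_\nu^{\mero}$, which is cuspidal and smooth near $C_\mu$ precisely because the cycles are disjoint, gives $\lim_{\delta\to 0}\int_{\Gamma_\mu\bs\partial B_\delta(C_\mu)}\omega_\nu^{\mero}\,\tilde p_Z(\mu)=-C_{n,\kappa}\,\delta_{C_\mu}(\omega_\nu^{\mero})$. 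Near $T_\nu$ I would split $\omega_\nu^{\mero}=(\nu,\psi(Z))^{-\kappa}+R(Z)$ with $R$ holomorphic near $T_\nu$; then $R\,\Omega_\mu^{\cusp}$ is a smooth $(n,n-1)$-form, so its boundary integral is $O(\varepsilon)$ (the mechanism behind \thref{ref:AntiHolResidueThm}), whereas $\lim_{\varepsilon\to 0}\int_{\Gamma_\nu\bs\partial B_\varepsilon(T_\nu)}\frac{\Omega_\mu^{\cusp}(Z)}{(\nu,\psi(Z))^\kappa}=\delta_{T_\nu}(\Omega_\mu^{\cusp})$ by the definition of the cycle integral $\delta_{T_\nu}$. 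Collecting the contributions yields $0=C_{n,\kappa}\,\delta_{C_\mu}(\omega_\nu^{\mero})-\delta_{T_\nu}(\Omega_\mu^{\cusp})$, which is the assertion.

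The main obstacle I expect is this last step: controlling the two limits so that the boundary integrals isolate exactly $\delta_{C_\mu}(\omega_\nu^{\mero})$ and $\delta_{T_\nu}(\Omega_\mu^{\cusp})$, i.e.\ that the remaining pieces—the terms $\tilde p_Z(\lambda)$ with $C_\lambda\neq C_\mu$ near $C_\mu$, the factor $R$ near $T_\nu$, and the cusp ends—contribute nothing. Most of the work near $C_\mu$ is already encoded in \thref{lem:RealAnalCycleIntegral}, and the vanishing of the non-polar part near $T_\nu$ is the same phenomenon as in \thref{ref:AntiHolResidueThm}; what is left is careful bookkeeping of the tubular-neighborhood coordinates, the order in which the limits in $\delta$ and $\varepsilon$ are taken, and the homogeneity factors entering the reduction to the base case.
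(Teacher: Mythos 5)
Your proposal is correct and follows essentially the same route as the paper: evaluate $\langle \omega_\mu^{\cusp}, \omega_\nu^{\mero} \rangle^{\reg}=0$ via \thref{thm:OrthogonalityOnCF}, re-evaluate with \thref{lem:Stokes} using the $\xi_{-\kappa}$-preimage $\Omega_\mu^{\cusp}$, and identify the two boundary limits with $-C_{n,\kappa}\delta_{C_\mu}(\omega_\nu^{\mero})$ (via \thref{lem:RealAnalCycleIntegral}) and $\delta_{T_\nu}(\Omega_\mu^{\cusp})$ (via the definition of the cycle integral along $T_\nu$). Your explicit splitting off of the smooth remainders near $C_\mu$ and $T_\nu$ is just a more detailed account of the unfolding step the paper performs.
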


\begin{proof}
	By linearity it suffices to consider $\omega_\nu^{\mero}, \Omega_\mu^{\cusp}$ for $q(\nu) < 0 < q(\mu)$ such that $C_\mu$ and $T_\nu$ do not intersect, i.e. $(\mu, \nu) \neq 0$. By \thref{thm:OrthogonalityOnCF} we have $\langle \omega_\mu^{\cusp}, \omega_\nu^{\mero} \rangle^{\reg} = 0$. Evaluating this differently using \thref{lem:Stokes} with the $\xi_{-\kappa}$-preimage $\Omega_\mu^{\cusp}$ yields on the other hand
	\begin{align*}
		\langle \omega_\mu^{\cusp}, \omega_\nu^{\mero} \rangle^{\reg} = \lim_{\varepsilon \to 0}\int_{\Gamma \bs \partial B_{\varepsilon}(C_\mu)} \omega_\nu^{\mero}(Z) \Omega_\mu^{\cusp}(Z) + \lim_{\varepsilon \to 0} \int_{\Gamma \bs \partial B_{\varepsilon}(T_\nu)} \omega_\nu^{\mero}(Z) \Omega_\mu^{\cusp}(Z),
	\end{align*}
	where we used that $\omega_\nu^{\mero}$ is holomorphic away from $T_\nu$. We unfold the first integral against $\Omega_\mu^{\cusp}$ and the second integral against $\omega_\nu^{\mero}$ to obtain
	\begin{align*}
		& \lim_{\varepsilon \to 0} \int_{\Gamma_\mu \bs \partial B_{\varepsilon}(C_\mu)} \omega_\nu^{\mero}(Z) \tilde{p}_Z(\mu) + \lim_{\varepsilon \to 0} \int_{\Gamma_\nu \bs \partial B_{\varepsilon}(T_\nu)} \frac{\Omega_\mu^{\cusp}(Z)}{(\lambda, \psi(Z))^\kappa} \\
		&= -C_{n,\kappa} \delta_{C_\mu}(\omega_\nu^{\mero}) + \delta_{T_\nu}(\Omega_\mu^{\cusp})
	\end{align*}
	by \thref{lem:RealAnalCycleIntegral}. This shows the result.
\end{proof}

\renewcommand\bibname{References}
\bibliographystyle{alphadin}
\bibliography{bibliography}

\end{document}